\theoremstyle{plain}
\newtheorem{theorem}{Theorem}[section]
\newtheorem{lemma}[theorem]{Lemma}
\newtheorem{corollary}[theorem]{Corollary}
\newtheorem{proposition}[theorem]{Proposition}
\newtheorem{claim}[theorem]{Claim}
\newtheorem*{claim*}{Claim}
\newtheorem{definition}[theorem]{Definition}
\theoremstyle{definition}
\newtheorem{remark}[theorem]{Remark}
\newcommand{\betrag}[1]{\vert{#1}\vert}
\newcommand{\dom}[1]{{{\rm{dom}}(#1)}}
\newcommand{\supp}[1]{{{\rm{sprt}}(#1)}}
\newcommand{\crit}[1]{{{\rm{crit}}\left({#1}\right)}}
\newcommand{\cof}[1]{{{\rm{cof}}(#1)}}
\newcommand{\ran}[1]{{{\rm{ran}}(#1)}}
\newcommand{\POT}[1]{{\wp}({#1})}
\newcommand{\map}[3]{{#1}:{#2}\longrightarrow{#3}}
\newcommand{\Map}[5]{{#1}:{#2}\longrightarrow{#3};~{#4}\longmapsto{#5}}
\newcommand{\pmap}[4]{{#1}:{#2}\xrightarrow{#4}{#3}}
\newcommand{\Set}[2]{\{{#1}~\vert~{#2}\}}
\newcommand{\seq}[2]{\langle{#1}~\vert~{#2}\rangle}
\newcommand{\anf}[1]{{\text{``}\hspace{0.3ex}{#1}\hspace{0.3ex}\text{''}}}
\newcommand{\Poti}[2]{{\wp}_{#2}(#1)}
\newcommand{\POTI}[2]{{\wp}^*_{#2}(#1)}
\newcommand{\NS}[1]{{{NS}}_{#1}}
\newcommand{\CLUB}[1]{{{Club}}_{#1}}
\newcommand{\HH}[1]{{\rm{H}}(#1)}
\newcommand{\Add}[2]{{\rm{Add}}({#1},{#2})}
\newcommand{\Col}[2]{{\rm{Col}}({#1},{#2})}
\newcommand{\Lim}{{\rm{Lim}}}
\newcommand{\Ord}{{\rm{Ord}}}
\newcommand{\LL}{{\rm{L}}}
\newcommand{\ZFC}{{\rm{ZFC}}}
\newcommand{\PFA}{{\rm{PFA}}}
\newcommand{\MM}{{\rm{MM}}}
\newcommand{\FA}{{\rm{FA}}}
\newcommand{\PPP}{{\mathbb{P}}}
\newcommand{\QQQ}{{\mathbb{Q}}}
\newcommand{\RRR}{{\mathbb{R}}}
\newcommand{\VV}{{\rm{V}}}
\newcommand{\calC}{\mathcal{C}}
\newcommand{\calD}{\mathcal{D}}
\newcommand{\calE}{\mathcal{E}}
\newcommand{\calS}{\mathcal{S}}
\newcommand{\calT}{\mathcal{T}}
 \newcommand{\CH}{{\rm{CH}}}
 \newcommand{\GCH}{{\rm{GCH}}}
 \newcommand{\calN}{\mathcal{N}}
\title{Forcing axioms and the complexity of non-stationary ideals}
\author{Sean Cox}
\address{
Department of Mathematics and Applied Mathematics \\
Virginia Commonwealth University \\
1015 Floyd Avenue \\
Richmond, Virginia 23284, USA.  
}
\author{Philipp L\"ucke}
\address{Facultat de Matem\`{a}tiques i Inform\`{a}tica, Universitat de Barcelona. 
Gran via de les Corts Catalanes 585,
08007 Barcelona, Spain.
}
\thanks{This project has received funding from the European Union’s Horizon 2020 research and innovation programme under the Marie Sk{\l}odowska-Curie grant agreement No 842082 of the second author (Project \emph{SAIFIA: Strong Axioms of Infinity -- Frameworks, Interactions and Applications}), and from the Simons Foundation Grant number 318467 of the first author.  
The authors would like to thank the anonymous referee for the detailed reading of the manuscript and several helpful comments.}
\subjclass[2020]{03E57, 03E35, 03E47}
\keywords{Non-stationary ideals, $\Delta_1$-definability, Martin's Maximum, Subcomplete Forcing Axiom, stationary reflection}
\begin{document}

\begin{abstract}
  We study the influence of strong forcing axioms on the complexity of the non-stationary ideal on $\omega_2$ and its restrictions to certain cofinalities. 
  Our main result shows that the strengthening $\MM^{++}$ of Martin's Maximum does not decide whether the restriction of the non-stationary ideal on $\omega_2$ to sets of ordinals of countable cofinality is $\Delta_1$-definable by formulas with parameters in $\HH{\omega_3}$. 
  The techniques developed in the proof of this result also allow us to prove analogous results for the full non-stationary ideal on $\omega_2$ and strong forcing axioms that are compatible with $\CH$. 
  Finally, we answer a question of S. Friedman, Wu and Zdomskyy  by showing that the $\Delta_1$-definability of the non-stationary ideal on $\omega_2$ is compatible with arbitrary large values of the continuum function at $\omega_2$. 
\end{abstract}

\maketitle



\section{Introduction}

  The fact that closed unbounded subsets generate a proper normal filter, the \emph{club filter} on $\kappa$ $$\CLUB{\kappa} ~ = ~ \Set{A\subseteq \kappa}{\textit{$\exists C\subseteq A$ closed and unbounded in $\kappa$}},$$  is one of the most important combinatorial properties of uncountable regular cardinals $\kappa$.  
 The study of the structural properties of these filters and their dual ideals, the \emph{non-stationary ideal} on $\kappa$ $$\NS{\kappa} ~ = ~ \Set{A\subseteq \kappa}{\textit{$\exists C$ closed and unbounded in $\kappa$ with $A\cap C=\emptyset$}}$$ plays a central role in modern set theory.

 In \cite{MR1222536} and \cite{MR1242054}, Mekler, Shelah and V\"a\"an\"anen initiated the study of the \emph{complexity} of club filters and  non-stationary ideals, leading to various results establishing interesting connections between the complexity of these objects and their structural properties. 
 Given an uncountable regular cardinal $\kappa$, it is easy to see that both $\CLUB{\kappa}$ and $\NS{\kappa}$ are definable by a $\Sigma_1$-formula with parameter $\kappa$, i.e. there exist $\Sigma_1$-formulas $\varphi_0(v_0,v_1)$ and $\varphi_1(v_0,v_1)$ such that $\CLUB{\kappa}=\Set{A}{\varphi_0(A,\kappa)}$ and $\NS{\kappa}=\Set{A}{\varphi_1(A,\kappa)}$. 
 The results of \cite{MR1242054}  show that under $\CH$, the $\mathbf{\Delta}_1(\HH{\omega_2})$-definability of $\NS{\omega_1}$ (i.e. the assumption that $\NS{\omega_1}=\Set{A}{\psi_1(A,z)}$ holds for some $\Pi_1$-formula $\psi(v_0,v_1)$ and some $z\in\HH{\omega_2}$) is equivalent to several interesting combinatorial and model-theoretic assumptions about objects of size $\omega_1$. 
  In particular, it is shown that this definability assumption is equivalent to  the existence of a so-called \emph{canary tree}, a tree of height and cardinality $\omega_1$ without cofinal branches that has specific properties with respect to the ordering of such trees under order-preserving embeddings. 
 Since the results of \cite{MR1222536} show that the existence of a canary tree is independent of $\ZFC+\CH$, it follows that this theory is not able to determine the exact complexity of $\NS{\omega_1}$.

 The above results were later generalized to higher cardinals. 
  If $S$ is a stationary subset of an uncountable regular cardinal $\kappa$, then we let $\NS{}\restriction S=\NS{\delta}\cap\POT{S}$ denote the \emph{restriction} of the non-stationary ideal on $\delta$ to $S$. Given infinite regular cardinals $\lambda<\kappa$, we set $S^\kappa_\lambda=\Set{\alpha<\kappa}{\cof{\alpha}=\lambda}$. In addition, if $m<n<\omega$, then we write $S^n_m$ instead of $S^{\omega_n}_{\omega_m}$. 
 Results of Hyttinen and Rautila in \cite{MR1877015}  showed that if $\kappa$ is an infinite regular cardinal in a model of the $\GCH$, then, in a cofinality-preserving forcing extension, the set $\NS{}\restriction S^{\kappa^+}_\kappa$ is $\mathbf{\Delta}_1(\HH{\kappa^{++}})$-definable. 
 Furthermore, in \cite{MR3320477}, S. Friedman, Wu and Zdomskyy showed that for every successor cardinal in G\"odel's constructible universe $\LL$, there is a cardinality-preserving forcing extension of $\LL$ in which $\NS{\kappa}$ is $\mathbf{\Delta}_1(\HH{\kappa^+})$-definable. 
 These results can be easily used to show that the complexity of the non-stationary ideal  and its restriction is not determined by $\ZFC$ (see Lemma \ref{lemma:Negative_results} and the subsequent discussion below). 
 Finally, recent work also unveiled several interesting consequences of the $\mathbf{\Delta}_1(\HH{\kappa^+})$-definability of restriction of $\NS{\kappa}$ at higher cardinals $\kappa$. 
 In particular, this set-theoretic assumption was shown to be closely connected to model-theoretic questions dealing with Shelah's \emph{Classification Theory} and the complexity of certain  mathematical theories (see, for example, {\cite[Theorem 64]{MR3235820}}).

 The above results strongly motivate the question whether canonical extensions of $\ZFC$ decide more about the complexity of non-stationary ideals, and this question turns out to be closely connected to important recent developments in set theory. 
 In \cite{MR3235820}, S. Friedman, Hyttinnen and Kulikov showed that, in the constructible universe $\LL$, the sets of the form $\NS{}\restriction S$ for some stationary subset $S$ of an uncountable regular cardinal $\kappa$ are not $\mathbf{\Delta}_1(\HH{\kappa^+})$-definable. 
 Using the notion of \emph{local club condensation} (see \cite{MR2860182}), it is possible to extend this conclusion to larger canonical inner models. 
  In another direction, S. Friedman and Wu observed in \cite{FriedmanWu} that strong saturation properties of the non-stationary ideal on $\omega_1$, i.e. the assumption that the poset $\POT{\omega_1}/\NS{\omega_1}$ has a dense subset of cardinality $\omega_1$, imply the $\mathbf{\Delta}_1(\HH{\omega_2})$-definability of $\NS{\omega_1}$. Results of Woodin in {\cite[Chapter 6]{MR2723878}} show that $\NS{\omega_1}$ possesses these properties in certain forcing extensions of determinacy models. 
   Finally, Schindler and his collaborators recently studied the question whether forcing axioms determine the complexity of $\NS{\omega_1}$. In \cite{LarsonSchindlerWu}, Larson, Schindler and Wu showed that \emph{Woodin's Axiom $(*)$} (see {\cite[Definition 5.1]{MR2723878}}) implies that $\NS{\omega_1}$ is not $\mathbf{\Delta}_1(\HH{\omega_2})$-definable. 
 In combination with recent results of Asper\'o and Schindler in \cite{AsperoSchindler}, this shows that $\MM^{++}$, a natural strengthening of \emph{Martin's Maximum}, implies that  $\NS{\omega_1}$ is not $\mathbf{\Delta}_1(\HH{\omega_2})$-definable.

  The work presented in this paper is motivated by the question whether strong forcing axioms determine the complexity of the non-stationary ideal on $\omega_2$ and its restrictions. 
The following result from \cite{MR3952233} shows that all extensions of $\ZFC$ that are preserved by forcing with ${<}\omega_2$-directed posets are compatible with the assumption that for every stationary subset $S$ of $\omega_2$, the set $\NS{}\restriction S$ is not $\mathbf{\Delta}_1(\HH{\omega_3})$-definable. 
 In particular, the results of \cite{cox2018forcing}, \cite{doi:10.1002/malq.200410101} and \cite{MR1782117} show that this statement is compatible with all standard forcing axioms, like $\MM^{++}$. 
 The lemma follows directly from a combination of {\cite[Theorem 2.1]{MR3952233}}, showing that no $\mathbf{\Delta}^1_1$-definable set (see {\cite[Definition 1.2]{MR2987148}}) separates $\CLUB{\kappa}$ from $\NS{\kappa}$ in the given model of set theory, and {\cite[Lemma 2.4]{MR2987148}}, showing that $\mathbf{\Delta}^1_1$-definability coincides with $\mathbf{\Delta}_1(\HH{\kappa^+})$-definability at all uncountable regular cardinals $\kappa$.

\begin{lemma}\label{lemma:Negative_results}
 Let $\kappa$ be an uncountable cardinal with $\kappa^{{<}\kappa}=\kappa$ and let $G$ be $\Add{\kappa}{\kappa^+}$-generic over $\VV$. In $\VV[G]$, no $\mathbf{\Delta}_1(\HH{\kappa^+})$-definable subset of $\POT{\kappa}$ separates $\CLUB{\kappa}$ from $\NS{\kappa}$, i.e. no set $X$ definable in this way satisfies $\CLUB{\kappa}\subseteq X\subseteq\POT{\kappa}\setminus\NS{\kappa}$. 
 \end{lemma}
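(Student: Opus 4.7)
The plan is to derive this lemma as an immediate consequence of two results already in the literature, leaving essentially no new combinatorial work. The assumption $\kappa^{{<}\kappa} = \kappa$ in particular forces $\kappa$ to be regular, so both $\CLUB{\kappa}$ and $\NS{\kappa}$ are well-behaved at $\kappa$ and the relevant definability classes are meaningful there; moreover, it guarantees that $\Add{\kappa}{\kappa^+}$ is ${<}\kappa$-closed with the $\kappa^+$-chain condition, so that all cofinalities (and in particular the regularity of $\kappa$) are preserved in $\VV[G]$.

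The first ingredient I would invoke is \cite[Theorem 2.1]{MR3952233}, which, applied to $\Add{\kappa}{\kappa^+}$, states exactly that in $\VV[G]$ no $\mathbf{\Delta}^1_1$-definable subset of $\POT{\kappa}$ separates $\CLUB{\kappa}$ from $\NS{\kappa}$. Morally the mechanism is a symmetry/homogeneity argument in the Cohen extension: any candidate separator is defined from a parameter that can be absorbed into an initial segment of the generic, and automorphisms of the tail of $\Add{\kappa}{\kappa^+}$ can then be used to move a well-chosen stationary co-club set into (or out of) the candidate, contradicting its status as a separator. I would quote the theorem as a black box.

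The second ingredient I would cite is \cite[Lemma 2.4]{MR2987148}, which asserts that at every uncountable regular cardinal $\kappa$ the classes of $\mathbf{\Delta}^1_1$-definable and $\mathbf{\Delta}_1(\HH{\kappa^+})$-definable subsets of $\POT{\kappa}$ coincide. The content is a standard coding lemma: elements of $\HH{\kappa^+}$ are exactly those sets whose transitive closures can be coded as subsets of $\kappa$, so quantifiers over $\HH{\kappa^+}$ and over $2^\kappa$ translate uniformly into one another at a regular $\kappa$.

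Combining the two ingredients finishes the argument: if $X \subseteq \POT{\kappa}$ in $\VV[G]$ were a $\mathbf{\Delta}_1(\HH{\kappa^+})$-definable separator with $\CLUB{\kappa} \subseteq X \subseteq \POT{\kappa} \setminus \NS{\kappa}$, then by the second ingredient it would also be $\mathbf{\Delta}^1_1$-definable, contradicting the first. The only real \emph{obstacle} in such a proof is conceptual, namely verifying that our Cohen extension fits into the framework of \cite[Theorem 2.1]{MR3952233} and that the translation lemma applies in $\VV[G]$; all the genuine combinatorial content has already been established in the cited work.
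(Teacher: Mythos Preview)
Your proposal is correct and matches the paper's own approach exactly: the paper also derives the lemma by combining \cite[Theorem 2.1]{MR3952233} (no $\mathbf{\Delta}^1_1$-definable separator in the Cohen extension) with \cite[Lemma 2.4]{MR2987148} (the coincidence of $\mathbf{\Delta}^1_1$- and $\mathbf{\Delta}_1(\HH{\kappa^+})$-definability at uncountable regular $\kappa$).
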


Note that, if $S$ is a stationary subset of an uncountable regular cardinal $\kappa$, then $\NS{}\restriction S$ separates $\CLUB{\kappa}$ from $\NS{\kappa}$. This shows that, in $\Add{\kappa}{\kappa^+}$-generic extensions, sets of the form $\NS{}\restriction S$ for stationary subsets $S$ are not $\mathbf{\Delta}_1(\HH{\kappa^+})$-definable.

 In contrast, we will prove the following theorem that shows that strong forcing axioms like $\MM^{++}$ are also compatible with the existence of a $\mathbf{\Delta}_1(\HH{\omega_3})$-definable set that separates the club filter on $\omega_2$ from the corresponding non-stationary ideal. 
 The proof of this result is based on a detailed analysis of the preservation properties of a variation of a forcing iteration constructed by Hyttinen and Rautila in the consistency proofs of  \cite{MR1877015}. 
 Our construction will also allow us to produce such models with arbitrary large $2^{\omega_2}$.  See Section \ref{section:Prelim} for the meaning of the ``$+\mu$" versions of forcing axioms.\footnote{In keeping with the prevailing convention in the literature: $\text{MM}^+$ refers to $\text{MM}^{+1}$, but $\text{MM}^{++}$ refers to $\text{MM}^{+\omega_1}$, \emph{not} to $\text{MM}^{+2}$ (and similarly for PFA and other forcing axioms).}

 \begin{theorem}\label{MainTheorem1}
Let $\FA$ denote any one of the following forcing axioms:
\begin{itemize}
 \item   $\MM^{\hspace{1.2pt}{+}\mu}$, where $\mu$ is a cardinal and $0 \le \mu \le \omega_1$; or
 \item $\PFA^{{+}\mu}$, where $\mu$ is a cardinal and $1 \le \mu \le \omega_1$.
\end{itemize} 
 Assume that $\FA$ holds, and let $\theta$ be a cardinal with $\theta^{\omega_2}=\theta$. 
  Then there exists a ${<}\omega_2$-directed closed, cardinal-preserving poset $\PPP$ with the property that whenever $G$ is $\PPP$-generic over $\VV$, then, in $\VV[G]$, the axiom $\FA$ still holds, $2^{\omega_2}=\theta$ and the set $\NS{}\restriction S^2_0$ is $\mathbf{\Delta}_1(\HH{\omega_3})$-definable. 
 \end{theorem}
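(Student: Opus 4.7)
The plan is to arrange a $\Sigma_1$-definable notion of \emph{witness for stationarity} for subsets $A\subseteq S^2_0$: a combinatorial object $W_A\in\HH{\omega_3}$ whose existence is $\Sigma_1$-expressible over $\HH{\omega_3}$ and equivalent to $A$ being stationary in the universe. Since non-stationarity is always $\Sigma_1$-expressible (via the existence of a disjoint club), this yields the desired $\mathbf{\Delta}_1(\HH{\omega_3})$-definition of $\NS{}\restriction S^2_0$. Following \cite{MR1877015}, I expect $W_A$ to be a suitably coherent sequence of continuous cofinal maps $f_\alpha : \omega \to \alpha$ indexed by $\alpha \in A$, together with an auxiliary tree structure, chosen so that the condition ``$W$ is a witness for $A$'' is $\Sigma_0$ over $\HH{\omega_3}$.

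I would first isolate, for each stationary $A \subseteq S^2_0$, a ${<}\omega_2$-directed closed poset $\QQQ(A)$ that generically adds such a $W_A$. The forcing $\PPP$ would then be a ${<}\omega_2$-support iteration $\seq{\PPP_\alpha, \dot\QQQ_\alpha}{\alpha < \theta}$, governed by a bookkeeping function that at each stage $\alpha$ selects a $\PPP_\alpha$-name $\dot A_\alpha$ for a subset of $S^2_0$ and sets $\dot\QQQ_\alpha$ to be $\QQQ(\dot A_\alpha)$ when $\dot A_\alpha$ is forced stationary and trivial otherwise. The assumption $\theta^{\omega_2} = \theta$ permits a bookkeeping enumeration catching every stationary subset of $S^2_0$ appearing in the final extension, and it also yields $|\PPP| = \theta$, so that $2^{\omega_2} = \theta$ there. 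Standard iteration lemmas for ${<}\omega_2$-support will then give cardinal-preservation and ${<}\omega_2$-directed closure of $\PPP$, and the preservation theorems for $\FA$ under ${<}\omega_2$-directed closed forcings recalled in Section \ref{section:Prelim} ensure that $\FA$ persists in the extension.

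The main technical obstacle is ensuring that witnesses remain \emph{faithful} throughout the iteration: the $\Sigma_0$-condition ``$W$ is a witness for $A$'' must imply genuine stationarity of $A$ in the final extension, not merely in the intermediate model at the stage where $W_A$ was added. This rigidity requirement is precisely what forces the specific shape of the Hyttinen--Rautila coding. Verifying it amounts to an induction along the iteration establishing both (i) that the tail iterations preserve the stationarity of previously coded sets, so that existing witnesses remain witnesses, and (ii) that the tail iterations do not generate spurious witnesses for sets whose stationarity has been destroyed. These preservation properties --- together with the parallel verification that the preservation of $\FA$ interacts correctly with each coding step --- constitute the heart of the proof and form the bulk of the technical work adapted from \cite{MR1877015}.
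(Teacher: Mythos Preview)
Your proposal has the right flavor but misses two structural features of the actual argument, and underestimates one technical point.

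First, the coding is aimed at the \emph{wrong} cofinality. The Hyttinen--Rautila machinery (and the paper's iteration in Section~\ref{sec_MainTechResult}) codes stationarity of subsets of $S^{\kappa^+}_\kappa$, i.e.\ of $S^2_1$ when $\kappa=\omega_1$: the tree $T(S)$ consists of increasing sequences in $S\subseteq S^2_1$ that are continuous at points of cofinality $\omega_1$, and the generic tree $\calT(G_0)$ receives an embedding from $T(S)$ exactly when $S$ is bistationary in $S^2_1$ (subject to an approachability constraint). Your description of the witness as ``a coherent sequence of continuous cofinal maps $f_\alpha:\omega\to\alpha$ indexed by $\alpha\in A$'' for $A\subseteq S^2_0$ does not match this, and more importantly, there is no direct Hyttinen--Rautila-style coding of $\NS{}\restriction S^2_0$ in the paper. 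What the iteration produces is a $\mathbf{\Delta}_1(\HH{\omega_3})$-definition of $\NS{}\restriction M$ for a maximum element $M$ of $I[\omega_2]\cap\POT{S^2_1}$ modulo $\NS{}$ (Corollary~\ref{cor_Def_NS_MaxSet}). The passage to $\NS{}\restriction S^2_0$ is then obtained \emph{indirectly} via Lemma~\ref{lemma:DefFromRefl}: the forcing axiom $\FA$ itself guarantees that every stationary $A\subseteq S^2_0$ reflects to a stationary subset of $M$, and since the stationary subsets of $M$ form a $\Sigma_1$-definable family, this reflection yields the desired $\Pi_1$-definition of $\NS{}\restriction S^2_0$. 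You omit both the approachability ideal $I[\omega_2]$ (which is essential, since under $\FA$ one has $\omega_1^\omega>\omega_1$) and the reflection step, and the latter is precisely where the specific axioms $\MM$ and $\PFA^{+1}$ enter.

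Second, your claim that ``standard iteration lemmas for ${<}\omega_2$-support will then give \dots\ ${<}\omega_2$-directed closure of $\PPP$'' is exactly the step that is \emph{not} standard. The individual iterands $\PPP(S,G_0)$ are not known to be (forcing equivalent to) ${<}\omega_2$-directed closed posets in the intermediate models $\VV[G_0]$, so the usual preservation lemma does not apply. The paper develops the notion of $\kappa^+$-Jensen-completeness (Section~\ref{sec_GeneralizeJensenLemma}) and proves directly that the full iteration is $\omega_2$-Jensen-complete (Lemma~\ref{lem_P_Jensen_complete}); the key is that the first step $\QQQ_0$ has not yet been decided when one seeks lower bounds for generic filters over $\omega_1$-sized models, allowing one to choose $f_0$ so as to preempt the bad case of Lemma~\ref{lem_KeyBackgroundLemma_NoTails}. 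This is the genuine technical novelty over \cite{MR1877015}, which only needed the weaker notion of $\kappa$-properness under $\GCH$.
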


We will also apply the techniques developed in the proof of the above result to forcing axioms that are compatible with the continuum hypothesis, focusing on the axiom $\FA^{{+}}(\text{$\sigma$-closed})$ and the \emph{subcomplete forcing axiom} $\mathrm{SCFA}$ introduced by Jensen in \cite{MR2840749}.  Note that both axioms are preserved by ${<}\omega_2$-directed closed forcings (see \cite{cox2018forcing} and \cite{MR1782117}) and hence Lemma \ref{lemma:Negative_results} above already shows that they are compatible with the assumption that no $\mathbf{\Delta}_1(\HH{\omega_3})$-definable set separates $\CLUB{\omega_2}$ from $\NS{\omega_2}$.

 \begin{theorem}\label{MainTheorem2}
  Let $\FA$ denote either the axiom $\mathrm{SCFA}$ or the axiom $\FA^{{+}}(\text{$\sigma$-closed})$. 
  Assume that $2^\omega=\omega_1$, $2^{\omega_1}=\omega_2$ and $\FA$ holds.  Let $\theta$ be a cardinal satisfying $\theta^{\omega_2}=\theta$. 
   Then there exists a ${<}\omega_2$-directed closed, cardinal-preserving poset $\PPP$ with the property that whenever $G$ is $\PPP$-generic over $\VV$, then, in $\VV[G]$, the axiom $\FA$ still holds,   $2^{\omega_2}=\theta$ and the set $\NS{\omega_2}$ is $\mathbf{\Delta}_1(\HH{\omega_3})$-definable. 
 \end{theorem}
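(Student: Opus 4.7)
The plan is to adapt the coding construction underlying Theorem \ref{MainTheorem1} to a $\CH$-compatible setting and, simultaneously, to extend it so as to cover the entire non-stationary ideal on $\omega_2$ rather than only its restriction to $S^2_0$. Under the hypotheses $2^{\omega}=\omega_1$ and $2^{\omega_1}=\omega_2$ one has $|\HH{\omega_2}|=\omega_2$, so every candidate stationary $S\subseteq\omega_2$ in the final extension is representable by an object of cardinality $\omega_2$, and the number of potential stationary sets at each intermediate stage is at most $\omega_3$, which makes a bookkeeping of length $\theta$ feasible.

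I would define $\PPP$ as a ${<}\omega_2$-directed closed iteration of length $\theta$ whose iterands $\QQQ_\alpha$ are ${<}\omega_2$-directed closed coding posets of Hyttinen--Rautila type, indexed by a bookkeeping function $b\colon\theta\to\theta$ catching every nice $\PPP$-name for a subset of $\omega_2$. At stage $\alpha$, given the name $\dot S$ targeted by $b(\alpha)$, the iterand $\QQQ_\alpha$ generically adds a \emph{witness object} $W_{\dot S}$---essentially a tree $T\subseteq{}^{{<}\omega_2}\omega_2$ of height $\omega_2$ whose level structure encodes a club meeting $S$ in a strongly absolute fashion. The intended $\mathbf{\Sigma}_1(\HH{\omega_3})$-definition of $\POT{\omega_2}\setminus\NS{\omega_2}$ then reads: there exists such a witness tree for $S$. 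Combined with the obvious $\mathbf{\Sigma}_1(\HH{\omega_3})$-definition of $\NS{\omega_2}$ via the existence of a disjoint club, this yields the desired $\mathbf{\Delta}_1(\HH{\omega_3})$-definability.

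Preservation will be verified along the lines of Theorem \ref{MainTheorem1}: ${<}\omega_2$-directed closure of $\PPP$ preserves $\CH$, $\omega_1$, and $\omega_2$, and preserves the axiom $\FA$ by the results of \cite{cox2018forcing} and \cite{MR1782117} cited in the introduction. Cardinals at and above $\omega_3$ will be preserved by an $\omega_3$-chain condition argument leveraging $\CH$, $2^{\omega_1}=\omega_2$, and $\theta^{\omega_2}=\theta$; the latter assumption also delivers $2^{\omega_2}=\theta$ in the extension. The bookkeeping must be arranged so that every stationary subset of $\omega_2$ appearing in $\VV^\PPP$ is supplied with a witness at some intermediate stage, which is possible because the iteration has length $\theta\geq\omega_3$ and catches all nice names via standard reflection.

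The main obstacle, and the point at which the present theorem genuinely goes beyond Theorem \ref{MainTheorem1}, is the handling of stationary sets concentrated on $S^2_1$. The Hyttinen--Rautila coding on $S^2_0$ proceeds via countable approximations, but at cofinality $\omega_1$ one must work with $\omega_1$-length approximations, and it must be verified both that the corresponding coding poset remains ${<}\omega_2$-directed closed and that subsequent stages of the iteration do not inadvertently destroy the stationarity of a previously coded set by adjoining an unintended club through its complement. The latter point amounts to a delicate stationary-set-preservation analysis of the coding iteration, which I expect to be the most technically demanding step of the proof and to require a projection or factoring argument tailored to the CH-compatible forcing axioms under consideration.
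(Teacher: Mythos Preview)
Your proposal has the roles of the two cofinalities reversed and misses the paper's central mechanism. The Hyttinen--Rautila iteration constructed in Section~\ref{sec_MainTechResult} (with $\kappa=\omega_1$) works with conditions of size $\le\omega_1$ and codes $\NS{}\restriction S^2_1$, \emph{not} $\NS{}\restriction S^2_0$: it produces a single master tree $T=\calT(G_0)$ and, for each bistationary $S\subseteq S^2_1$, an order-preserving map $T(S)\to T$. This is what Corollary~\ref{cor_Def_NS_MaxSet} turns into a $\mathbf{\Delta}_1(\HH{\omega_3})$-definition of $\NS{}\restriction S^2_1$ (here the maximal set $M$ equals $S^2_1$, since $\CH$ gives $\omega_1^{{<}\omega_1}=\omega_1$; see Lemma~\ref{lem_CardArith_MaximalElement}\eqref{item_CardArith_AP_fail}). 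What you flag as the ``main obstacle''---coding at cofinality $\omega_1$ via $\omega_1$-length approximations---is exactly what the existing construction already does; there is nothing new to prove there.

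The genuine new step for Theorem~\ref{MainTheorem2} is handling $\NS{}\restriction S^2_0$, and the paper does \emph{not} do this by direct coding. Instead, it uses that the forcing axiom survives the ${<}\omega_2$-directed closed forcing and (together with $\CH$ in the $\mathrm{SCFA}$ case) implies that every stationary subset of $S^2_0$ reflects to a point of $S^2_1$. Lemma~\ref{lemma:DefFromRefl}, applied with $\calE=\POT{S^2_1}\setminus\NS{\omega_2}$ (which is $\Sigma_1$-definable by the previous paragraph), then yields a $\Pi_1$-definition of $\NS{}\restriction S^2_0$; combining the two restrictions gives the full $\NS{\omega_2}$. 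Your plan to iterate a separate coding poset for \emph{every} stationary subset of $\omega_2$, each adding its own ``witness tree,'' is both unnecessary and not clearly workable: you would have to ensure that such witnesses cannot exist for non-stationary sets while keeping the iterands ${<}\omega_2$-directed closed and ensuring later stages do not destroy earlier codings, and you give no mechanism for this. The paper sidesteps all of this by having a \emph{single} tree $T$ with no cofinal branch and encoding stationarity of $A\subseteq S^2_1$ via existence of an embedding $T(S^2_1\setminus A)\to T$, then invoking reflection for the $S^2_0$ part.
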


 This theorem also provides an affirmative answer to {\cite[Problem 3.3]{MR3320477}} posed by S. Friedman, Wu and Zdomskyy, by showing that the $\mathbf{\Delta}_1$-definability of $\NS{\omega_2}$ is compatible with $2^{\omega_2}\geq\omega_5$.


\section{Preliminaries}\label{section:Prelim}

This section covers well-known results, mostly related to the notions of internal approachability and Shelah's Approachability Ideal.

  First we recall the ``plus" versions of forcing axioms, which were first introduced by Baumgartner~\cite{MR776640} (though the prevailing notation has changed somewhat since then).  If $\Gamma$ is a class of posets and $\mu$ is a cardinal, $\text{FA}^{+\mu}\big(\Gamma \big)$ states that for every poset $\PPP \in \Gamma$, for every collection $\calD$ of size $\omega_1$ of dense subsets of $\PPP$ and for every sequence $\seq{\sigma_\xi}{\xi<\mu}$ of $\PPP$-names for stationary subsets of $\omega_1$, there is a $\calD$-generic filter $g$ on $\PPP$ with the property that the set $\Set{\alpha<\omega_1}{\exists p\in g ~ p\Vdash_\PPP\anf{\check{\alpha}\in\sigma_\xi}}$ is stationary in $\omega_1$ for every $\xi<\mu$.  If $\Gamma$ is the class of posets that preserve stationary subsets of $\omega_1$, we write $\MM^{+\mu}$ instead of $\text{FA}^{+\mu}(\Gamma)$; and, as mentioned earlier, $\MM^{++}$ refers to $\MM^{+\omega_1}$, \emph{not} to $\MM^{+2}$.  Similar comments apply to the class of proper posets and PFA.

\begin{definition}\label{def_GenericAndTotalMaster}
 Let $\PPP$ be a poset and let $W \prec (\HH{\theta},\in,\PPP)$ for some sufficiently large regular cardinal $\theta$.
 \begin{enumerate}
 \item A condition $p \in \PPP$ is a $\boldsymbol{(W,\PPP)}$\textbf{-master condition} if $$W[G]\cap\VV ~ = ~ W$$ holds whenever $G$ is $\PPP$-generic over $\VV$ with $p\in G$. 
  
  \item A set $g$ is $\boldsymbol{(W,\PPP)}$\textbf{-generic} if $g \subseteq\PPP\cap W$, $g$ is a filter on $\PPP\cap W$, and $D\cap g  \neq \emptyset$ for every $D \in W$ that is a dense subset of $\PPP$.\footnote{Sometimes the requirement that $g \subseteq W$ is dropped, but then one has the demand that $D\cap g  \cap W\neq\emptyset$ holds for each dense $D \in W$.}  
  
  \item A condition $p \in \mathbb{P}$ is a $\boldsymbol{(W,\PPP)}$\textbf{-total master condition} if the set $$\Set{r \in \PPP\cap W}{p\leq_\PPP r}$$ is a $(W,\PPP)$-generic filter. 
 \end{enumerate} 
\end{definition}

The following result is well-known:

\begin{lemma}\label{lem_TotalMasterDist}
  Let $\PPP$ be a poset, let $W \prec (\HH{\theta},\in,\PPP)$, let $\mu$ be an ordinal with $\mu\subseteq W$, and let $\dot{f} \in W$ be a $\PPP$-name for a function from $\mu$ to the ground model $\VV$. 
   If $p$ is a $(W,\PPP)$-total master condition and $G$ is $\PPP$-generic over $\VV$ with $p\in G$, then $\dot{f}^G\in\VV$.  
\end{lemma}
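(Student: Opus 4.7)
The plan is to bypass the general master-condition identity $W[G]\cap\VV=W$ and instead reconstruct $\dot{f}^G$ directly inside $\VV$, taking advantage of the fact that the ``total'' part of the hypothesis on $p$ gives us a $(W,\PPP)$-generic filter $g=\Set{r\in\PPP\cap W}{p\leq_\PPP r}$ that itself lies in the ground model. Informally, the pointwise values of $\dot{f}^G$ should already be readable from $g$ alone, without needing the full generic $G$.

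First, for each $\alpha<\mu$ I would consider the set
\[
 D_\alpha ~ = ~ \Set{r\in\PPP}{\exists x\in\VV~ r\Vdash_\PPP\anf{\dot{f}(\check{\alpha})=\check{x}}}
\]
of conditions that decide the $\alpha$-th value of $\dot{f}$. Density of $D_\alpha$ in $\PPP$ follows from the assumption that $\dot{f}$ is forced to be a function into $\VV$. Because $\mu\subseteq W$ and $\dot{f},\PPP\in W$, elementarity of $W$ in $(\HH{\theta},\in,\PPP)$ places $D_\alpha$ inside $W$, so $(W,\PPP)$-genericity of $g$ produces some $r_\alpha\in g\cap D_\alpha$; a further appeal to elementarity supplies a witness $x_\alpha\in W\subseteq\VV$ with $r_\alpha\Vdash_\PPP\anf{\dot{f}(\check{\alpha})=\check{x}_\alpha}$.

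Next I would verify that $x_\alpha$ depends only on $g$ and $\alpha$, not on the particular choice of $r_\alpha$: any two conditions in $g$ have a common lower bound in $g$, and no single condition can simultaneously force $\check{x}\neq\check{y}$ for distinct ground-model $x,y$. This allows me to define, inside $\VV$, a function $h\colon\mu\to\VV$ by $h(\alpha)=x_\alpha$, using only the parameters $g,\PPP,\dot{f}\in\VV$. Finally, since $p\leq_\PPP r_\alpha$ and $p\in G$ force $r_\alpha\in G$, I obtain $\dot{f}^G(\alpha)=x_\alpha=h(\alpha)$ for every $\alpha<\mu$, whence $\dot{f}^G=h\in\VV$.

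The only delicate points to keep track of are that $D_\alpha$ and a witnessing ground-model value $x_\alpha$ really land inside $W$; both follow mechanically from $\mu\subseteq W$, $\dot{f}\in W$, and the elementarity of $W$, so I do not anticipate any substantive obstacle beyond unpacking the definitions of ``$(W,\PPP)$-generic'' and ``$(W,\PPP)$-total master condition.''
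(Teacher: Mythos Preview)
Your proof is correct and follows essentially the same idea as the paper's: exploit that the filter $g=\Set{r\in\PPP\cap W}{p\leq_\PPP r}$ lies in $\VV$, use its $(W,\PPP)$-genericity to find conditions in $g$ deciding each value $\dot{f}(\check{\alpha})$, and observe that since $p\in G$ extends every element of $g$, these decided values agree with $\dot{f}^G$. The only cosmetic difference is that the paper picks, by elementarity, a single sequence $\seq{A_\xi}{\xi<\mu}\in W$ of maximal antichains deciding the values of $\dot{f}$ (so the whole sequence is a ground-model object at once), whereas you work pointwise with the dense sets $D_\alpha$ and then argue that the resulting map $\alpha\mapsto x_\alpha$ is definable in $\VV$ from $g$, $\PPP$, and $\dot{f}$; both packagings yield the same conclusion.
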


\begin{proof}
 Fix a $(W,\PPP)$-total master condition $p$ and a filter $G$ on $\PPP$ that is generic over $\VV$ and contains the condition $p$. Let $g=\Set{r \in\PPP\cap W}{p\leq_\PPP r}$ denote the $(W,\PPP)$-generic filter induced by $p$.  Then $G \cap W$ is a $(W,\PPP)$-generic filter extending $g$ and therefore standard arguments show that $G \cap W=g$. 
 By elementarity, there is a sequence $\seq{A_\xi}{\xi<\mu}\in W$ of maximal antichains in $\PPP$ with the property that for every $\xi<\mu$, each condition in $A_\xi$ decides the value of $\dot{f}$ at $\xi$.  
  Since $\mu\subseteq W$, this shows that for all $\xi<\mu$, the unique condition in $A_\xi\cap g$ decides the value of $\dot{f}$ at $\xi$. But the sequence $\seq{A_\xi}{\xi<\mu}$ and the filter $g$ are both elements of $\VV$ and hence the function $\dot{f}^G$ is also in the ground model.  
 \end{proof}

 We state a definition that will be used extensively in the following arguments:

\begin{definition}\label{def_IA}
 Given an infinite regular  cardinal $\kappa$, we let $\text{IA}_\kappa$ denote the class of all sets $W$ with the property that there exists a sequence $\vec{N} = \seq{N_\alpha}{\alpha < \kappa}$ that satisfies the following statements: 
 \begin{enumerate}
   \item The sequence $\vec{N}$ is $\subseteq$-increasing and $\subseteq$-continuous. 
  
  \item $W = \bigcup\Set{N_\alpha}{\alpha < \kappa}$. 
 
  \item $\betrag{N_\alpha} < \kappa$ for all $\alpha<\kappa$. 
 
  \item Every proper initial segment of $\vec{N}$ is an element of $W$. 
 \end{enumerate} 
\end{definition}

\begin{remark}
If $\vec{N}$ witnesses that $W$ is an element of $\text{IA}_\kappa$ and $W \prec \HH{\theta}$ for some $\theta > \kappa$, then $\kappa \subseteq W$. This is because we have $\vec{N} \restriction\alpha \in W$ for every $\alpha < \kappa$, and the domain of $\vec{N} \restriction \alpha$, namely $\alpha$, is definable from the parameter $\vec{N} \restriction \alpha$. 
\end{remark}

In what follows, if $\tau$ is a regular uncountable cardinal, $\Poti{H}{\tau}$ refers to the set of all $W \subseteq H$ with $\betrag{W}<\tau$, and $\POTI{H}{\tau}$ denotes the set $$\Set{W \in\Poti{H}{\tau}}{W \cap \tau \in \tau}.$$ The set $\POTI{\HH{\theta}}{\tau}$ contains a club in the sense of Jech (see \cite{JECH1973165}), but not necessarily in the sense of Shelah (see \cite{10.2307/1971415}).

\begin{remark}
 In the above situation, if $W \in \POTI{\HH{\theta}}{\tau}$, $W \prec \HH{\theta}$, and $x \in W$ with $\betrag{x} < \tau$, then $x \subseteq W$.\footnote{Note that this could fail if $W$ were allowed to have non-transitive intersection with $\tau$.}
\end{remark}

\begin{lemma}\label{lem_IA_is_stat}
 If $\kappa$ is a regular and uncountable cardinal, then $\text{IA}_\kappa$ is stationary in $\Poti{\HH{\theta}}{\kappa^+}$ for all sufficiently large regular $\theta$. 
\end{lemma}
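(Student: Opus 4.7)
The plan is to show directly that $\text{IA}_\kappa$ meets every Jech-club in $\Poti{\HH{\theta}}{\kappa^+}$ for every sufficiently large regular $\theta$. So I fix such a club $C$ and, by the standard reduction, I may suppose that $C$ contains every substructure of cardinality at most $\kappa$ of some fixed algebra $\mathfrak{A}=(\HH{\theta},\in,<^*,\ldots)$ which expands $(\HH{\theta},\in)$ by a well-order $<^*$ together with a finite list of Skolem functions. It therefore suffices to construct a $W\in\text{IA}_\kappa$ that is an elementary submodel of $\mathfrak{A}$ of cardinality at most $\kappa$.

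The construction is the standard recursive one, with one bookkeeping ingredient. First pick $N_0\prec\mathfrak{A}$ of cardinality below $\kappa$. At each successor step $\alpha+1$, let $N_{\alpha+1}$ be the Skolem hull in $\mathfrak{A}$ of $N_\alpha\cup\{\vec{N}\restriction(\alpha+1)\}$, where $\vec{N}\restriction(\alpha+1)$ denotes the sequence $\seq{N_\beta}{\beta\leq\alpha}$ constructed so far. At limit stages $\alpha<\kappa$, take $N_\alpha=\bigcup_{\beta<\alpha}N_\beta$. Regularity of $\kappa$ ensures $|N_\alpha|<\kappa$ at every stage, and hence $|W|\leq\kappa$ for $W:=\bigcup_{\alpha<\kappa}N_\alpha$. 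Elementarity passes to unions, so $W\prec\mathfrak{A}$ and therefore $W\in C\cap\Poti{\HH{\theta}}{\kappa^+}$.

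It remains to verify that $\vec{N}$ witnesses $W\in\text{IA}_\kappa$, and the only clause of Definition~\ref{def_IA} that is not immediate from the construction is (4), the requirement that every proper initial segment $\vec{N}\restriction\alpha$ for $\alpha<\kappa$ lies in $W$. For successor $\alpha=\beta+1$ the successor step explicitly placed $\vec{N}\restriction\alpha$ into $N_{\beta+1}\subseteq W$. For limit $\alpha<\kappa$ we also have $\alpha+1<\kappa$, and the same bookkeeping puts $\vec{N}\restriction(\alpha+1)$ into $N_{\alpha+1}$; since $N_{\alpha+1}\prec\mathfrak{A}$ then also contains $\alpha+1$ (the domain of $\vec{N}\restriction(\alpha+1)$) and hence $\alpha$, elementarity yields $\vec{N}\restriction\alpha\in N_{\alpha+1}\subseteq W$.

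I do not expect any serious obstacle here: this is a textbook internally-approachable chain construction, and the only subtlety is the bookkeeping step that feeds each completed initial segment of $\vec{N}$ back into the next elementary submodel so that clause (4) of Definition~\ref{def_IA} is satisfied at \emph{limit} ordinals as well as at successors.
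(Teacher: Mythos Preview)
Your proof is correct and takes essentially the same approach as the paper: build a $\subseteq$-continuous, $\subseteq$-increasing chain $\seq{N_\alpha}{\alpha<\kappa}$ of elementary substructures of $\mathfrak{A}$, each of size ${<}\kappa$, while feeding the completed initial segments back into the chain so that clause~(4) of Definition~\ref{def_IA} holds. The only cosmetic difference is that the paper places $\vec{N}\restriction\alpha$ into $N_{\alpha+1}$ (which handles all $\alpha<\kappa$ uniformly), whereas you place $\vec{N}\restriction(\alpha+1)$ into $N_{\alpha+1}$ and then recover $\vec{N}\restriction\alpha$ for limit $\alpha$ by elementarity; both work.
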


\begin{proof}
Given a first-order structure $\mathfrak{A} = (\HH{\theta}, \in, \kappa, \ldots)$ in a countable language, recursively construct a $\subseteq$-continuous and $\subseteq$-increasing sequence $\vec{N} = \seq{N_\alpha}{\alpha< \kappa}$ of elementary substructures of $\mathfrak{A}$ of cardinality less than $\kappa$ such that $\vec{N} \restriction \alpha \in N_{\alpha+1}$ for all $\alpha < \kappa$.  Then $\vec{N}$ witnesses that its union is contained in $\text{IA}_\kappa$.
\end{proof}

\begin{lemma}\label{lem_ClosureYieldsGenerics2}
 Let $\PPP$ be a poset, let $\kappa<\theta$ be infinite regular cardinals with $\PPP\in\HH{\theta}$, let $\lhd$ be a well-ordering of $\HH{\theta}$, let $W \prec (\HH{\theta},\in,\PPP,\lhd)$ with $W \in \text{IA}_\kappa$, and let $p \in\PPP\cap W$.  
 \begin{enumerate} 
  \item If $\PPP$ is ${<}\kappa$-closed, then there exists a $(W,\PPP)$-generic filter that contains $p$.  

  \item If $\PPP$ is ${<}\kappa^+$-closed, then there exists a $(W,\PPP)$-total master condition below $p$.
\end{enumerate}
\end{lemma}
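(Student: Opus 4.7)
Fix a witnessing sequence $\vec{N}=\seq{N_\alpha}{\alpha<\kappa}$ for $W\in\text{IA}_\kappa$, and for each $\alpha<\kappa$ let
\[
\calD_\alpha \;=\; \Set{D\in N_\alpha}{D\text{ is a dense subset of }\PPP}.
\]
Then $|\calD_\alpha|\le|N_\alpha|<\kappa$, and a dense $D\subseteq\PPP$ lies in $W$ iff it lies in some $\calD_\alpha$. The strategy is to build a $\le_\PPP$-descending sequence $\seq{p_\alpha}{\alpha<\kappa}$ below $p$ inside $\PPP\cap W$ such that $p_\alpha\in D$ for every $D\in\calD_\alpha$; the filter it generates will then witness~(1), and any lower bound will witness~(2).

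For the recursion, I will define $p_\alpha$ non-recursively by a single $\HH{\theta}$-formula: let $p_\alpha$ be the $\lhd$-least $q\in\PPP$ for which there exists a $\le_\PPP$-decreasing sequence $\seq{q_\beta}{\beta\le\alpha}$ with $q_0\le p$, $q_\alpha=q$, and, for each $\beta\le\alpha$, $q_\beta\in D$ for every $D\in\calD_\beta$. The existence of such a sequence follows by a routine induction: at successor stages we use ${<}\kappa$-closure plus $|\calD_\beta|<\kappa$ to simultaneously extend into all dense sets in $\calD_\beta$, and at limit stages we again use ${<}\kappa$-closure. The crucial point is that $p_\alpha$ is then definable in $(\HH{\theta},\in,\PPP,\lhd)$ from $p$, $\PPP$, $\lhd$, and the proper initial segment $\vec{N}\restriction(\alpha+1)$, all of which belong to $W$; hence $p_\alpha\in W$ by elementarity.

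Granted the sequence, (1) follows immediately: the set $g=\Set{r\in\PPP\cap W}{\exists\alpha<\kappa~p_\alpha\le_\PPP r}$ is a filter on $\PPP\cap W$ containing~$p$, and given any dense $D\in W$ we have $D\in N_\alpha$ for some $\alpha<\kappa$, so $p_\alpha\in D\cap g$. For (2), the additional ${<}\kappa^+$-closure of $\PPP$ yields a lower bound $q\le_\PPP p_\alpha$ for all $\alpha<\kappa$; in particular $q\le_\PPP p$. Setting $g'=\Set{r\in\PPP\cap W}{q\le_\PPP r}$, we have $g\subseteq g'$, so $g'$ meets every dense set in $W$, and $g'$ is obviously upward closed in $\PPP\cap W$. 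Directedness of $g'$ is handled by the standard density trick: given $r_1,r_2\in g'$, the set $\Set{s\in\PPP}{s\le r_1,r_2}\cup\Set{s}{s\inkomp{\PPP}r_1\text{ or }s\inkomp{\PPP}r_2}$ is dense and lies in $W$, hence some $p_\alpha$ belongs to it; since $q$ is a common lower bound of $p_\alpha,r_1,r_2$, the disjuncts involving incompatibility are impossible, so $p_\alpha\le_\PPP r_1,r_2$ and $p_\alpha\in g'$ is the desired common extension. Thus $g'$ is $(W,\PPP)$-generic and $q$ is a $(W,\PPP)$-total master condition below $p$.

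The main obstacle is the bookkeeping needed to keep the recursion inside $W$: one cannot simply ``extend through the next dense set'' in an ad hoc manner, because arbitrary choices need not remain in $W$. Making the choice $\lhd$-least and—equally importantly—presenting $p_\alpha$ as the value of a single definable function of the parameter $\vec{N}\restriction(\alpha+1)$ (rather than as the output of an externally driven transfinite recursion) is exactly what lets elementarity of $W\prec\HH{\theta}$ deliver $p_\alpha\in W$.
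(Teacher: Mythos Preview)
Your non-recursive definition of $p_\alpha$ has a genuine gap: taking $p_\alpha$ to be the $\lhd$-least $q$ that is the \emph{endpoint} of \emph{some} decreasing sequence $\seq{q_\beta}{\beta\le\alpha}$ with the stated properties does not guarantee that the resulting family $\seq{p_\alpha}{\alpha<\kappa}$ is $\le_\PPP$-descending. The witnessing sequence for $p_{\alpha+1}$ need not pass through $p_\alpha$ at coordinate $\alpha$; it only passes through \emph{some} $q_\alpha\in S_\alpha$, and since you minimized over the endpoint rather than over the whole sequence, $q_\alpha$ may well be $\lhd$-larger than (and $\le_\PPP$-incomparable with) $p_\alpha$. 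Without descending-ness, the set $g=\Set{r\in\PPP\cap W}{\exists\alpha~p_\alpha\le r}$ is not obviously directed, so you have no filter for part~(1); and for part~(2) there is no common lower bound to take.

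The paper's proof avoids this by defining the sequence \emph{recursively}: $p_{\alpha+1}$ is the $\lhd$-least condition below $p_\alpha$ lying in every open dense set in $N_\alpha$, and $p_\alpha$ at limits is the $\lhd$-least lower bound of the preceding segment. This builds in descending-ness by fiat. Your worry---that a recursion ``externally driven'' by choices outside $W$ might leave $W$---is handled exactly by the $\lhd$-least clause at each step: the entire initial segment $\vec{p}\restriction(\alpha+1)$ is then definable in $(\HH{\theta},\in,\lhd)$ from $p$ and $\vec{N}\restriction(\alpha+1)$, both of which lie in $W$, so elementarity gives $\vec{p}\restriction(\alpha+1)\in W$ and in particular $p_\alpha\in W$. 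In other words, a recursive construction with canonical choices already achieves the definability you were after; there is no need to recast it as a single existential formula, and doing so costs you the coherence of the sequence. Your treatment of part~(2), including the density argument for directedness of $g'$, is fine once a genuinely descending sequence is in hand.
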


\begin{proof}
 Let $\vec{N} = \seq{N_\alpha}{\alpha < \kappa}$ witness that $W$ is an element of $\text{IA}_\kappa$. 
 
 (1)  Assuming that $\PPP$ is ${<}\kappa$-closed. Using the closure of $\PPP$ and the fact that each $N_\alpha$ has cardinality less than $\kappa$, we can recursively construct a descending sequence $\vec{p}=\seq{p_\alpha}{\alpha < \kappa}$ of conditions below $p$ in $\PPP$ such that the following statements hold for all $\alpha< \kappa$:
  \begin{enumerate}
   \item[(a)] The condition $p_{\alpha+1}$ is the $\lhd$-least element of $\PPP$ below $p_\alpha$ that is an element of every open dense set that belongs to $N_\alpha$.\footnote{We do not require here that $p_{\alpha+1}$ is a total master condition for $N_\alpha$.  That is, if $D \in N_\alpha$ is dense, the upward closure of $p_{\alpha+1}$ is only required to meet $D$, not necessarily $D \cap N_\alpha$.}  
   
   \item[(b)] If $\alpha$ is a limit ordinal, then $p_\alpha$ is the $\lhd$-least lower bound of the sequence $\seq{p_\ell}{\ell < \alpha}$.  
  \end{enumerate} 
 Then every proper initial segment of $\vec{p}$ is definable from a proper initial segment of $\vec{N}$, and hence every proper initial segment of $\vec{p}$ is in $W$. In particular, we know that $p_{\alpha+1} \in W$ for all $\alpha < \kappa$.  It follows that the filter in $\PPP$ generated by the subset $\Set{p_\alpha}{\alpha < \kappa}$ is $(W,\PPP)$-generic. 

 (2) Now, assume that $\PPP$ is ${<}\kappa^+$-closed and repeat the above construction of the sequence $\vec{p}$. Then $\vec{p}$ has a lower bound in $\PPP$, and this lower bound is clearly a $(W,\PPP)$-total master condition.
\end{proof}

Next we discuss one variant of proper forcing.

\begin{definition}\label{def_ProperForIA}
Let $\kappa$ be an infinite regular cardinal. 
 \begin{enumerate}
   \item A poset $\PPP$ is \textbf{$\boldsymbol{IA}_{\boldsymbol{\kappa}}$-proper} if for all sufficiently large regular cardinals $\theta$, all $W \prec (\HH{\theta},\in,\PPP)$ with $W \in \text{IA}_\kappa$ and all $p \in\PPP\cap W$, 
    there is a $(W,\mathbb{P})$-master condition below $p$.

 \item A poset $\PPP$ is  \textbf{$\boldsymbol{IA}_{\boldsymbol{\kappa}}$-totally proper} if for all sufficiently large regular cardinals $\theta$, all $W \prec (\HH{\theta},\in,\PPP)$ with $W \in \text{IA}_\kappa$ and all $p \in\PPP\cap W$, there is a $(W,\mathbb{P})$-total master condition below $p$. 
 \end{enumerate}
\end{definition}

It is well-known that $\text{IA}_\kappa$-proper posets preserve all stationary subsets of $S^{\kappa^+}_\kappa$ that lie in the approachability ideal $I[\kappa^+]$ defined below. 
 Since we could not find a reference for exactly what is needed in our arguments, we sketch the proof below.  Note that it is possible for $\text{IA}_\kappa$-proper (even $\text{IA}_\kappa$-totally proper) posets to destroy the stationarity of some subsets of $S^{\kappa^+}_\kappa$ (see \cite{MR2160657}). 
  So $\text{IA}_\kappa$-total properness is, in general, strictly weaker than $\kappa^+$-Jensen completeness (defined in the next section), because ${<}\kappa^+$-closed forcings preserve all stationary subsets of $\kappa^+$.

\begin{definition}[Shelah]
 Let $\kappa$ be an infinite regular cardinal. 
 \begin{enumerate}
  \item Given a sequence $\vec{z} = \seq{z_\alpha}{\alpha < \kappa^+}$ a sequence of elements of $[\kappa^+]^{{<}\kappa}$, an ordinal $\gamma < \kappa^+$ is called \textbf{approachable with respect to $\boldsymbol{\vec{z}}$} if there exists a sequence $$\vec{\alpha} ~ = ~ \seq{\alpha_\xi}{\xi < \cof{\gamma}}$$ 
cofinal in $\gamma$ such that every proper initial segment of $\vec{\alpha}$ is equal to $z_\alpha$ for some $\alpha < \gamma$.  

  \item The \textbf{Approachability ideal} $\boldsymbol{I[\kappa^+]}$ on $\kappa^+$ is the (possibly non-proper) \emph{normal} ideal generated by sets of the form $$A_{\vec{z}} ~ = ~ \Set{\gamma < \kappa^+}{\textit{$\gamma$ is approachable with respect to $\vec{z}$ }}$$ for some sequence $\vec{z} \in {}^{\kappa^+}([\kappa^+]^{{<}\kappa})$. 
 \end{enumerate}
\end{definition}

 Note that a subset $X$ of $\kappa^+$ is an element of  $I[\kappa^+]$ if and only if there exists some club $D \subseteq \kappa^+$ and some sequence $\vec{z} \in {}^{\kappa^+}([\kappa^+]^{{<}\kappa})$ such that every $\gamma \in D \cap X$ is approachable with respect to $\vec{z}$. 
 In the following, we will make use of several facts about $I[\kappa^+]$.  Throughout this section, $\kappa$ denotes a regular cardinal.

\begin{lemma}[\cite{MR2160657}]\label{lem_CardArith_MaximalElement}
Suppose $\kappa^{{<}\kappa} \leq \kappa^+$, and let $\seq{z_
\alpha}{\alpha < \kappa^+}$ be an enumeration of $[\kappa^+]^{{<}\kappa}$.\footnote{Note that such an enumeration exists by our cardinal arithmetic assumption.}  Define $$M_{\vec{z}} ~ = ~ \Set{\gamma\in S^{\kappa^+}_\kappa}{\textit{$\gamma$ is approachable with respect to $\vec{z}$ }}.$$  Then the following statements hold: 
\begin{enumerate}
  \item $M_{\vec{z}}$ is a stationary subset of $S^{\kappa^+}_\kappa$. 
  
  \item $M_{\vec{z}} \in I[\kappa^+]$. 
  
  \item $M_{\vec{z}}$ is a \emph{maximum element of $I[\kappa^+]\cap\POT{S^{\kappa^+}_\kappa}$ mod NS}, i.e.  whenever $S$ is a stationary subset of $S^{\kappa^+}_\kappa$ such that $S \in I[\kappa^+]$, then $S \setminus M_{\vec{z}}$ is non-stationary. 
  
  \item\label{item_CardArith_AP_fail} If $\kappa^{{<}\kappa} = \kappa$, then $S^{\kappa^+}_\kappa \setminus M_{\vec{z}}$ is non-stationary. In particular, $\kappa^{{<}\kappa} = \kappa$ implies that $S^{\kappa^+}_\kappa \in I[\kappa^+]$. 
 \end{enumerate}
\end{lemma}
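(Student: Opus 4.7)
My plan is to handle the four claims in sequence, letting the easier parts set up the main content of part (1).

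Part (2) is essentially by definition: $M_{\vec{z}} = A_{\vec{z}} \cap S^{\kappa^+}_\kappa$ is contained in one of the generators of the normal ideal $I[\kappa^+]$, so it lies in that ideal. Part (3) then exploits the fact that $\vec{z}$ enumerates \emph{every} element of $\Poti{\kappa^+}{\kappa}$. Given a stationary $S \subseteq S^{\kappa^+}_\kappa$ with $S \in I[\kappa^+]$, unfold the ideal description to obtain a club $D \subseteq \kappa^+$ and a sequence $\vec{w} \in {}^{\kappa^+}\Poti{\kappa^+}{\kappa}$ so that every $\gamma \in D \cap S$ is approachable with respect to $\vec{w}$. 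Define $\map{f}{\kappa^+}{\kappa^+}$ by $f(\alpha) = \min\Set{\beta < \kappa^+}{z_\beta = w_\alpha}$ and let $C$ be the club of closure points of $f$. For any $\gamma \in D \cap S \cap C$, a witness sequence for approachability of $\gamma$ with respect to $\vec{w}$ is automatically a witness with respect to $\vec{z}$, since each required $z$-index equals $f(\alpha)$ for some $\alpha < \gamma$ and hence lies below $\gamma$. Therefore $S \setminus M_{\vec{z}}$ is non-stationary.

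Part (1) is the heart of the lemma, and I would prove it by a direct construction combined with Lemma \ref{lem_IA_is_stat}. Given a club $C \subseteq \kappa^+$ and a sufficiently large regular $\theta$, recursively build a $\subseteq$-continuous, $\subseteq$-increasing chain $\vec{N} = \seq{N_\alpha}{\alpha < \kappa}$ of elementary substructures of $(\HH{\theta}, \in, \vec{z}, C, \kappa^+)$ of cardinality less than $\kappa$, arranging $\vec{N}\restriction(\alpha+1) \in N_{\alpha+1}$ for all $\alpha < \kappa$. Set $W = \bigcup_{\alpha < \kappa} N_\alpha$ and $\gamma = \sup(W \cap \kappa^+)$. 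The chain structure gives $\cof{\gamma} = \kappa$, and since $C \in W$ is unbounded in $\kappa^+$ elementarity yields that $C \cap W$ is cofinal in $\gamma$, so the closure of $C$ forces $\gamma \in C$. Take $a_\xi = \sup(N_\xi \cap \kappa^+)$, so that $\seq{a_\xi}{\xi < \kappa}$ is cofinal in $\gamma$. The key observation is that for each $\eta < \kappa$, the set $\Set{a_\xi}{\xi < \eta}$ is definable from $\vec{N}\restriction\eta$ and $\kappa^+$ and therefore belongs to $N_{\eta+1}$; since $\vec{z} \in N_{\eta+1}$ as well, elementarity forces the least $\beta$ with $z_\beta = \Set{a_\xi}{\xi < \eta}$ to lie in $N_{\eta+1} \cap \kappa^+$, and hence strictly below $\gamma$. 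This shows that $\gamma \in M_{\vec{z}} \cap C$, completing the stationarity argument.

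Part (4) is a closure argument made possible by the sharper cardinal arithmetic. Under $\kappa^{{<}\kappa} = \kappa$, each $\alpha < \kappa^+$ satisfies $\betrag{\Poti{\alpha}{\kappa}} \le \kappa$, so $g(\alpha) = \sup\Set{\min\Set{\beta < \kappa^+}{z_\beta = x}}{x \in \Poti{\alpha}{\kappa}}$ is well defined as a function $\map{g}{\kappa^+}{\kappa^+}$; its closure points form a club $E$. For any $\gamma \in E \cap S^{\kappa^+}_\kappa$ and any cofinal sequence in $\gamma$ of order type $\kappa$, every proper initial segment lies in $\Poti{\gamma}{\kappa}$ and is therefore enumerated by $\vec{z}$ at some index below $\gamma$, putting $\gamma$ into $M_{\vec{z}}$. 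The main obstacle throughout is the careful bookkeeping in part (1) — guaranteeing that the proper initial segments of the witness sequence really sit in the correct $N_{\eta+1}$ and that elementarity pins the enumeration index strictly below $\gamma$ — whereas parts (2)--(4) reduce to routine closure-point arguments.
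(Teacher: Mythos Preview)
Your proposal is correct and follows essentially the same route as the paper's proof: part~(1) via an $\text{IA}_\kappa$-style chain of elementary submodels containing $\vec{z}$, part~(2) by containment in $A_{\vec{z}}$, and parts~(3) and~(4) by closure arguments. The only stylistic difference is that in parts~(3) and~(4) you use explicit closure-point functions $f$ and $g$, whereas the paper packages the same closure via Skolem hulls of the form $\text{Hull}^{(\HH{\theta},\in,\vec{u},\vec{z},D)}(\gamma)$; both achieve the same effect of ensuring that indices of relevant $z_\beta$'s fall below $\gamma$.
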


\begin{proof}
 (1)  Fix a sufficiently large regular cardinal $\theta$ and a well-ordering $\lhd$ of $\HH{\theta}$.  
 Fix $W \in \text{IA}_\kappa$ with $W \prec (\HH{\theta},\in,\lhd,\vec{z})$ and let $\seq{N_\alpha}{\alpha < \kappa}$ be a sequence witnessing that $W \in \text{IA}_\kappa$. Given $\alpha<\kappa$, set $\gamma_\alpha=\sup(N_\alpha \cap \kappa^+)<\kappa^+$. Then $\vec{\gamma}=\seq{\gamma_\alpha}{\alpha<\kappa}$ enumerates a cofinal subset of $W \cap \kappa^+$ of order-type $\kappa$ and every proper initial segment of this sequence is an element of $W$.  
 Moreover, each proper initial segment of $\vec{\gamma}$ is an element of $[\kappa^+]^{{<}\kappa}$, and hence an element of $$W \cap \Set{ z_\alpha}{\alpha < \kappa^+} ~ = ~  \Set{z_\alpha}{\alpha < W \cap \kappa^+}.$$ This shows that $W \cap \kappa^+$ is approachable with respect to $\vec{z}$. 
 Since Lemma \ref{lem_IA_is_stat} shows that  there are stationarily-many $W \in \text{IA}_\kappa$ with $W \prec (\HH{\theta},\in,\lhd,\vec{z})$, these computations allow us to conclude that  $M_{\vec{z}}$ is a stationary subset of $S^{\kappa^+}_\kappa$.  

(2) Since $M_{\vec{z}}\subseteq A_{\vec{z}} \in I[\kappa^+]$, the statement $M_{\vec{z}} \in I[\kappa^+]$ holds trivially. 

 (3) Now, suppose that $S \in I[\kappa^+]$ is a stationary subset of $S^{\kappa^+}_\kappa$. By earlier remarks, there is a sequence $\vec{u} = \seq{u_\alpha}{\alpha< \kappa^+}$ of elements of $[\kappa^+]^{{<}\kappa}$ and club subset $D$ of $\kappa^+$ with the property that every $\gamma \in D \cap S$ is approachable with respect to $\vec{u}$. Define $$E ~ = ~ \Set{\gamma \in S}{\text{Hull}^{(\HH{\theta},\in,\vec{u},\vec{z},D)}(\gamma) \cap \kappa^+ = \gamma}.$$ Then $S\setminus E$ is non-stationary. 
 Fix $\gamma \in E$ and set $M(\gamma)= \text{Hull}^{(\HH{\theta},\in,\vec{u},\vec{z},D)}(\gamma)$. Since $\gamma \in S\in I[\kappa^+]$, there is a cofinal sequence $\vec{\beta}=\seq{\beta_\ell}{\ell < \kappa}$ in $\gamma$ such that every proper initial segment of $\vec{\beta}$ appears in $\vec{u}\restriction\gamma$. 
 But since $\vec{z}$ enumerates all of $[\kappa^+]^{{<}\kappa}$, the fact that $\vec{u}, \vec{z} \in M(\gamma) \prec (\HH{\theta},\in)$ implies that  for every $\alpha < \gamma$ there is a $k(\alpha) < \gamma$ with $u_\alpha = z_{k(\alpha)}$, i.e. $$\Set{u_\alpha}{\alpha < \gamma} ~  \subseteq ~ \Set{z_\alpha}{\alpha < \gamma}. $$ In particular, every proper initial segment of $\vec{\beta}$ appears in $\vec{z}$ before $\gamma$ and therefore $\gamma$ is approachable with respect to $\vec{z}$. 
  These computations show that $S\setminus M_{\vec{z}}\subseteq S\setminus E$ is non-stationary in $\kappa^+$.

 (4) Now, assume that $\kappa^{{<}\kappa} = \kappa$.  Then $\betrag{\eta^{{<}\kappa}} = \kappa$ for every $\eta < \kappa^+$ and hence there is a function $\map{f}{\kappa^+}{\kappa^+}$ with the property that for all $\eta<\kappa^+$, every element of $[\eta]^{{<}\kappa}$ is enumerated by $\vec{z} \restriction f(\eta)$. 
    Let $D$ denote the club of all $\kappa<\gamma < \kappa^+$ such that $$\text{Hull}^{(\HH{\theta},\in,\vec{z},f)}(\gamma) ~ \cap ~ \kappa^+ ~ = ~  \gamma.$$  Pick $\gamma \in D \cap S^{\kappa^+}_\kappa$, and set $W(\gamma)= \text{Hull}^{(\HH{\theta},\in,\vec{z},f)}(\gamma)$. Fix a cofinal sequence $\vec{\alpha}$ in $\gamma$ of order-type $\kappa$  in $\gamma$, and some $\xi < \kappa$. 
     Since $\cof{\gamma} = \kappa$, there is $\eta < \gamma$ with $\alpha_\ell<\eta$ for all $\ell<\xi$ and $\vec{\alpha} \restriction \xi = z_\zeta$ for some $\zeta < f(\eta)$.  Moreover, since $\eta \in W(\gamma)$ and $\betrag{f(\eta)}\leq\kappa\subseteq W(\gamma)$, elementarity implies that $f(\eta) \in W(\gamma)$ and $f(\eta)\subseteq W(\gamma)$. Since $\vec{z}$ and $\zeta$ are both elements of $W(\gamma)$, we can conclude that $z_\zeta \in W(\gamma)$.  Hence $\gamma$ is approachable with respect to $\vec{z}$.
\end{proof}

The next few lemmas address stationary set preservation when GCH may fail to hold.

\begin{lemma}\label{lem_IA_projectiveOverAppIdeal}
The class $\text{IA}_{\kappa}$ is projective stationary over $$\calS ~ = ~\Set{T \subseteq S^{\kappa^+}_\kappa}{\textit{$T$ is stationary and $T \in I[\kappa^+]$}},$$ {i.e.}  if $T \in \calS$, then for every sufficiently large regular cardinal $\theta$ and every function $\map{F}{[\HH{\theta}]^{{<}\omega}}{\HH{\theta}}$, there exists $W \in \text{IA}_\kappa$ such that $W \cap \kappa^+ \in T$ and $W$ is closed under $F$.  
\end{lemma}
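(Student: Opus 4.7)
The plan is to pick $\gamma$ from a stationary subset of $T$ and to construct a $W \in \text{IA}_\kappa$ satisfying $W \cap \kappa^+ = \gamma$, with the chain $\vec{N}$ witnessing $W \in \text{IA}_\kappa$ engineered so that each of its proper initial segments is recoverable inside the background structure from a single parameter below $\gamma$.

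First I would fix a witness $(\vec{z}, D)$ to $T \in I[\kappa^+]$: a sequence $\vec{z} = \langle z_\alpha : \alpha < \kappa^+\rangle$ in $[\kappa^+]^{<\kappa}$ and a club $D \subseteq \kappa^+$ such that every $\gamma \in D \cap T$ is approachable with respect to $\vec{z}$. Bundle $\vec{z}$, $D$, $T$, $F$, $\kappa$, and a well-ordering $\lhd$ of $\HH{\theta}$ into a first-order structure $\mathfrak{A}$ on $\HH{\theta}$ in a countable language, and let $\text{Sk}$ denote Skolem hulls in $\mathfrak{A}$. Since $E^* := \{\gamma < \kappa^+ : \text{Sk}(\gamma) \cap \kappa^+ = \gamma\}$ is a club in $\kappa^+$, I can fix $\gamma \in E^* \cap D \cap T$ together with an approaching sequence $\vec{\beta} = \langle \beta_\xi : \xi < \kappa\rangle$ cofinal in $\gamma$ such that for each $\xi < \kappa$ the set $\{\beta_\ell : \ell < \xi\}$ equals $z_{\iota(\xi)}$ for some $\iota(\xi) < \gamma$.

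The chain is then defined recursively by $N_0 := \text{Sk}(\emptyset)$, $N_\lambda := \bigcup_{\alpha < \lambda} N_\alpha$ at limits $\lambda < \kappa$, and $N_{\alpha+1} := \text{Sk}(N_\alpha \cup \{\beta_\alpha,\, \alpha,\, \vec{N}\restriction(\alpha+1)\})$ at successors. Routine inductions give $|N_\alpha| < \kappa$, show that $W := \bigcup_{\alpha < \kappa} N_\alpha \in \text{IA}_\kappa$ with $\vec{N}$ as witness (each proper initial segment lies in $N_{\alpha+1} \subseteq W$ by construction), and give closure of $W$ under $F$ since $F$ sits in the language of $\mathfrak{A}$. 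The inclusion $W \cap \kappa^+ \subseteq \gamma$ follows by induction on $\alpha < \kappa$ that $N_\alpha \subseteq \text{Sk}(\gamma)$: the key point is that the whole initial segment $\vec{N}\restriction (\alpha+1)$ is definable in $\mathfrak{A}$ from $z_{\iota(\alpha)}$ (reconstruct $\vec{\beta}\restriction\alpha$ from its range $z_{\iota(\alpha)}$ by increasing enumeration, then run the recursion), and $z_{\iota(\alpha)} \in \text{Sk}(\{\iota(\alpha)\}) \subseteq \text{Sk}(\gamma)$ because $\iota(\alpha) < \gamma$. Combined with $\gamma \in E^*$, this yields $W \cap \kappa^+ \subseteq \text{Sk}(\gamma) \cap \kappa^+ = \gamma$.

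The main obstacle is the reverse inclusion $\gamma \subseteq W$: the construction directly places only the cofinal set $\{\beta_\xi : \xi < \kappa\}$ below $\gamma$ into $W$, together with $\kappa$ via the insertion of $\alpha$ at stage $\alpha+1$, yet \emph{every} ordinal below $\gamma$ must wind up in $W$. The resolution exploits the well-ordering $\lhd$: given $\delta < \gamma$, pick $\xi$ with $\beta_\xi > \delta$ and let $b$ be the $\lhd$-least bijection from $|\beta_\xi| \leq \kappa$ onto $\beta_\xi$. Then $b$ is definable in $\mathfrak{A}$ from $\beta_\xi \in W$, the unique $\rho$ with $b(\rho) = \delta$ satisfies $\rho < \kappa \subseteq W$, and choosing $\alpha$ large enough that $\{\beta_\xi, \rho\} \subseteq N_\alpha$ yields $\delta = b(\rho) \in N_\alpha \subseteq W$ by elementarity. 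Combining the two inclusions gives $W \cap \kappa^+ = \gamma \in T$, as required.
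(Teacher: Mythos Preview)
Your proof is correct and follows essentially the same approach as the paper's: pick $\gamma \in T$ in the club of points closed under the Skolem functions of a structure containing $\vec{z}$ and $\lhd$, use the approachability witness $\vec{\beta}$ to build an internally approachable chain whose proper initial segments are definable from parameters below $\gamma$, and conclude $W \cap \kappa^+ = \gamma$. The only differences are cosmetic: the paper uses an auxiliary elementary $W' \prec \mathfrak{A}$ with $W' \cap \kappa^+ = \gamma$ in place of your $\mathrm{Sk}(\gamma)$, builds $N_{\alpha+1}$ as the $\lhd$-least set closed under $F$ with $\alpha \subseteq N_{\alpha+1}$ and $\sup(N_{\alpha+1} \cap \kappa^+) \geq \beta_\alpha$ rather than as a Skolem hull, and packages the conclusion as ``$\sup(N \cap \kappa^+) = \gamma$ and $N \cap \kappa^+$ is transitive'' instead of your two explicit inclusions.
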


\begin{proof}
 Fix $T\in\calS$. Then there is a club $D$ in $\kappa^+$ and a sequence $\vec{z}\in{}^{\kappa^+}[\kappa^+]^{{<}\kappa}$ such that every element of $D \cap T$ is approachable with respect to $\vec{z}$. 
 
Fix a regular $\vartheta$ with $F \in\HH{\vartheta}$, let $\lhd$ be a well-ordering of $\HH{\vartheta}$ and set $$\mathfrak{A} ~ = ~ (\HH{\vartheta},\in,\lhd,\vec{z},D,F, T).$$  Pick $\gamma \in D \cap T$ and $W\prec\mathfrak{A}$ with $\gamma=W \cap \kappa^+$, which is possible because $T$ is stationary.  
 Since $\gamma \in T \cap D$, there is an increasing sequence $\vec{\beta} = \seq{\beta_\alpha}{\alpha < \kappa}$ that is cofinal in $\gamma$ and has the property that every proper initial segment of $\vec{\beta}$ is equal to $z_\alpha$ for some $\alpha < \gamma$.  
 Since $\vec{z} \in W$ and $W \cap \kappa^+ =\gamma$, it follows that every proper initial segment of $\vec{\beta}$ is an element of $W$. 
 Recursively define a sequence $\vec{N} = \seq{N_\alpha}{\alpha < \kappa}$ as follows: 
\begin{itemize} 
 \item Given $\alpha<\kappa$, let $N_{\alpha+1}$ be the $\lhd$-least element of $[\HH{\theta}]^{{<}\kappa}$ such that $N_{\alpha+1}$ is closed under $F$, $\seq{N_\ell}{\ell \leq\alpha} \in N_{\alpha+1}$, $\alpha \subseteq N_{\alpha+1}$, and $\sup(N_{\alpha+1} \cap \kappa^+) \geq \beta_\alpha$. 
 
 \item If $\alpha<\kappa$ is a limit ordinal, then $N_\alpha = \bigcup\Set{N_\ell}{\ell<\alpha}$. 
\end{itemize} 

Set $N= \bigcup\Set{N_\alpha}{\alpha < \kappa}$.  Then $N \in \text{IA}_\kappa$, $N$ is closed under $F$, and
\begin{equation}\label{eq_ge_gamma}
  \sup(N \cap \kappa^+) ~ \geq ~ \sup_{\alpha < \kappa} \beta_\alpha ~ = ~  \gamma.
\end{equation}

 On the other hand, for each $\alpha < \kappa$, the sequence $\seq{N_\ell}{\ell\leq\alpha}$ is definable in $\mathfrak{A}$ from the parameter $\seq{\beta_\ell}{\ell\leq\alpha}$, which is an element of $W$ by the above remarks.  Hence every proper initial segment of $\vec{N}$ is an element of $W$ and, in particular, we know that  $$\sup(N_\alpha \cap \kappa^+) ~ < ~  \gamma ~ = ~  W \cap \kappa^+$$ for all $\alpha < \kappa$. 
  It follows that $\sup(N \cap \kappa^+) \leq \gamma$.  Combined with \eqref{eq_ge_gamma}, this shows that $\sup(N \cap \kappa^+) = \gamma$. 
   Finally, since $\alpha \subseteq N_{\alpha+1}$ for all $\alpha < \kappa$, it follows that $\kappa \subseteq N$ and hence we know that $N \cap \kappa^+$ is transitive.  This allows us to conclude that $$N \cap \kappa^+ ~ = ~  \sup(N \cap \kappa^+) ~ = ~ \gamma,$$ completing the proof of the lemma. 
\end{proof}

The following lemma is one way to salvage stationary set preservation in the non-GCH context.

\begin{lemma}\label{lem_ProperForIApreservesApproach}
 Let $\PPP$ be a $\text{IA}_\kappa$-proper poset and let $T \subseteq S^{\kappa^+}_\kappa$ be stationary with $T \in I[\kappa^+]$.  Then forcing with $\PPP$ preserves the stationarity of $T$. 
\end{lemma}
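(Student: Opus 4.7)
The plan is a standard ``master condition meets the name'' argument. Suppose toward a contradiction that some condition $p_0 \in \PPP$ forces a name $\dot{C}$ to denote a club in $\kappa^+$ disjoint from $\check{T}$. Fix a sufficiently large regular cardinal $\theta$ with $\PPP, p_0, \dot{C}, T \in \HH{\theta}$ and such that $\text{IA}_\kappa$-properness of $\PPP$ is witnessed by submodels of $(\HH{\theta},\in)$. Since $T \in \calS$ by hypothesis, Lemma \ref{lem_IA_projectiveOverAppIdeal} yields an elementary submodel
$$W \prec (\HH{\theta}, \in, \PPP, p_0, \dot{C}, T, \kappa, \kappa^+)$$
with $W \in \text{IA}_\kappa$ and $\gamma := W \cap \kappa^+ \in T$. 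By $\text{IA}_\kappa$-properness of $\PPP$ applied to $p_0 \in \PPP\cap W$, pick a $(W,\PPP)$-master condition $q \leq_\PPP p_0$.

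Next, let $G$ be $\PPP$-generic over $\VV$ with $q \in G$. The master condition property gives $W[G] \cap \VV = W$, and since all ordinals lie in the ground model, this implies
$$W[G] \cap \kappa^+ ~ = ~ W \cap \kappa^+ ~ = ~ \gamma.$$
Working in $\VV[G]$, the standard lifting gives $W[G] \prec \HH{\theta}^{\VV[G]}$, and $W[G]$ satisfies that $\dot{C}^G$ is an unbounded subset of $\kappa^+$. Hence, for every $\alpha < \gamma$ there is some $\beta \in \dot{C}^G \cap W[G]$ with $\alpha < \beta$, so $\dot{C}^G \cap \gamma$ is cofinal in $\gamma$. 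Since $\dot{C}^G$ is closed as a subset of the ordinal $\kappa^+$ and $\gamma < \kappa^+$, we conclude $\gamma \in \dot{C}^G$. But $\gamma \in T$ and $q \Vdash_\PPP \dot{C} \cap \check{T} = \emptyset$, which is the desired contradiction.

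The substantive step — and the one I expect to be the principal obstacle if one did not already have it in hand — is producing an $\text{IA}_\kappa$-submodel $W$ whose trace $W \cap \kappa^+$ lands in the prescribed stationary set $T \in I[\kappa^+]$; this is precisely the content of Lemma \ref{lem_IA_projectiveOverAppIdeal}. Everything after that is a routine unpacking of the definition of a master condition together with elementarity of $W[G]$ in the generic extension, and does not require that $\kappa^+$ remain a cardinal in $\VV[G]$, since the argument only uses that $\dot{C}^G$ is closed as a set of ordinals below the specific ordinal $\kappa^+$.
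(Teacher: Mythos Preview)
Your proof is correct and follows essentially the same route as the paper's own argument: both invoke Lemma~\ref{lem_IA_projectiveOverAppIdeal} to land a model $W\in\text{IA}_\kappa$ with $W\cap\kappa^+\in T$, then use the master condition to ensure $W[G]\cap\kappa^+=\gamma$ and conclude $\gamma\in\dot{C}^G\cap T$. The only cosmetic difference is that you phrase it as a proof by contradiction (assuming some $p_0$ forces $\dot{C}\cap\check{T}=\emptyset$), whereas the paper phrases it as a density argument (for arbitrary $p$ and $\dot{C}$, densely many $q$ force $\dot{C}\cap\check{T}\neq\emptyset$); these are the same argument.
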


\begin{proof}
 Set $\tau= \kappa^+$.  Let  $\dot{C}$ be  a $\PPP$-name for a club in $\tau$, let $p \in \mathbb{P}$ and let $\theta$ be a sufficiently large regular cardinal.  Using Lemma \ref{lem_IA_projectiveOverAppIdeal}, we find $\gamma \in T$ and $W \prec (\HH{\theta},\in,p,\dot{C},\PPP)$ with $W \in \text{IA}_\kappa$ and $W \cap \tau=\gamma$. 
 By our assumptions, there is a $(W,\PPP)$-master condition $q$ below $p$ in $\PPP$. 
  Let $G$ be $\PPP$-generic over $\VV$ with $q\in G$. Then $W[G]\cap\tau=W\cap\tau=\gamma$. Moreover, since $\dot{C} \in W$, we now know that $\dot{C}^G\cap W[G]$ is unbounded in $\gamma$ and hence $\gamma\in\dot{C}^G\cap T$. 
  
  These computations show that, in the ground model $\VV$, we have $$q\Vdash_\PPP\anf{\dot{C}\cap\check{T}\neq\emptyset}$$ for densely-many conditions $q$ in $\PPP$.  
\end{proof}


\section{Generalizing a lemma of Jensen}\label{sec_GeneralizeJensenLemma}

The notion of $\text{IA}_\kappa$-properness, defined in Section \ref{section:Prelim}, is a non-$\GCH$ analogue of the notion of \emph{$\kappa$-properness} introduced in {\cite[Definition 3.4]{MR1877015}} . This notion will be important to proving that tails of the iteration described in Section \ref{sec_MainTechResult} do not add cofinal branches to a certain tree, and that argument will closely follow the corresponding arguments of \cite{MR1877015}.

However, $\text{IA}_\kappa$-properness (in the case $\kappa = \omega_1$) is \textbf{not} sufficient for ensuring the preservation of forcing axioms that we need for the proofs of our main results. 
 There are examples of $\text{IA}_{\omega_1}$-proper forcings that destroy, for example, the Proper Forcing Axiom.\footnote{E.g.\ if $2^{\omega_1} = \omega_2$ then there is a natural $\text{IA}_{\omega_1}$-proper poset that forces the \emph{Approachability Property} to hold at $\omega_2$, hence destroys the Proper Forcing Axiom.   This poset is just the natural poset to shoot an $\omega_1$-club through the set $M$ described in Lemma \ref{lem_CardArith_MaximalElement}. }  On the other hand, ${<}\omega_2$-directed closed posets preserve all standard forcing axioms (see \cite{doi:10.1002/malq.200410101} and \cite{MR1782117}). 
  In this section, we generalize a result of Jensen, yielding a property that is forcing equivalent to ${<}\omega_2$-directed closure, but often easier to verify than ${<}\omega_2$-directed closure.

 In \cite{MR2840749}, Jensen defines a poset $\PPP$ to be \textbf{complete} if for every sufficiently large $\theta$, there are club-many $W \in\Poti{\HH{\theta}}{\omega_1}$ such that every $(W,\PPP)$-generic filter has a lower bound in $\PPP$.\footnote{Jensen's notes say this is equivalent to a definition of Shelah in {\cite[Chapter 10]{MR1623206}}.}  He then proves:

\begin{lemma}[Jensen]
The following statements are equivalent for every poset $\PPP$: 
 \begin{enumerate}
  \item The poset $\PPP$ is complete. 
  
  \item The poset $\PPP$ is forcing equivalent to a $\sigma$-closed poset. 
\end{enumerate}
\end{lemma}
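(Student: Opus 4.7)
\emph{Proof proposal.} The plan is to establish that $\PPP$ is complete if and only if $\PPP$ has a $\sigma$-closed dense subposet, which is a standard formulation of forcing equivalence to a $\sigma$-closed poset. The forward direction is essentially diagonalization, while the reverse direction requires exhibiting such a dense subposet and is where the main technical work lies.

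For $(2) \Rightarrow (1)$, let $\QQQ \subseteq \PPP$ be a $\sigma$-closed dense subposet. For club-many countable $W \prec (\HH{\theta}, \in, \PPP, \QQQ)$ and any $(W, \PPP)$-generic filter $g$, elementarity and density of $\QQQ$ imply that for each $p \in g$ the dense set $\{r \in \QQQ : r \leq p\}$ lies in $W$, so $g \cap \QQQ$ is cofinal in $g$ and downward directed. Since $\PPP \cap W$ is countable, I can enumerate $g \cap \QQQ$ and extract a descending $\omega$-chain $\langle s_n : n < \omega \rangle$ cofinal in $g \cap \QQQ$, using the filter property of $g$. By $\sigma$-closure of $\QQQ$, this chain admits a lower bound $s \in \QQQ$, which is then a lower bound of $g$.

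For $(1) \Rightarrow (2)$, suppose $\PPP$ is complete, as witnessed by a club $C \subseteq \Poti{\HH{\theta}}{\omega_1}$. I would define $\QQQ$ to be the set of conditions $q \in \PPP$ that are $(W, \PPP)$-total master conditions for some $W \in C$, and verify that $\QQQ$ is dense and $\sigma$-closed. Density is standard: given $p \in \PPP$, choose $W \in C$ containing $p$, recursively build a descending sequence in $\PPP \cap W$ starting from $p$ that meets every dense subset of $\PPP$ in $W$ (using elementarity), and apply completeness to the resulting $(W,\PPP)$-generic filter to produce a lower bound $q \leq p$, which is automatically a $(W,\PPP)$-total master condition.

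The main obstacle is proving $\sigma$-closure of $\QQQ$. Given descending $\langle q_n : n < \omega \rangle$ in $\QQQ$ with $q_n$ a $(W_n, \PPP)$-total master, I would first pass inductively to an increasing sequence $\langle \widetilde W_n \rangle$ in $C$ with $W_n \cup \{q_n\} \subseteq \widetilde W_n$ and $W := \bigcup_n \widetilde W_n \in C$, then build a $(W, \PPP)$-generic filter $g$ containing every $q_n$. The heart of the argument is showing that for every dense $D \in W$ there is $r \in D \cap W$ compatible with every $q_n$: one exhibits $n$ with $D \in W_n$, uses the total master property of $q_n$ to produce $r \in D \cap W_n$ with $q_n \leq r$, and then the descending chain structure gives compatibility with every $q_m$ (for $m \geq n$ automatically, and for $m < n$ because $q_n$ itself witnesses compatibility of $r$ and $q_m$). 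Applying completeness to $g$ then yields a $(W,\PPP)$-total master $q^* \leq q_n$ for all $n$, so $q^* \in \QQQ$. The delicate step is the coherent enlargement of the $W_n$'s so that all dense subsets of $\PPP$ in $W$ are captured at some stage; this is likely to require strengthening the definition of $\QQQ$ to demand total mastership for a canonically chosen family of substructures, or refining the $q_n$'s inductively using density of the preliminary $\QQQ$ to produce total masters for progressively richer $W_n$'s.
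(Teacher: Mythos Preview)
Your direction $(2)\Rightarrow(1)$ is fine. The gap is in $(1)\Rightarrow(2)$, and you have correctly identified where it lies: with $\QQQ$ defined as the set of total master conditions for \emph{some} $W\in C$, the $\sigma$-closure argument does not close. Given a descending sequence $\langle q_n\rangle$ with $q_n$ a $(W_n,\PPP)$-total master, the $W_n$'s are unrelated, and once you enlarge to $\widetilde W_n\supseteq W_n$ the condition $q_n$ is no longer a total master for $\widetilde W_n$. Your suggested repair of ``refining the $q_n$'s inductively'' runs into a compatibility problem: at stage $n+1$ you would need a common extension of your refined $\widetilde q_n$ and the original $q_{n+1}$, but both lie below $q_n$ and need not be compatible. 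So this is a genuine obstruction, not merely bookkeeping.

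The paper does not prove this lemma directly (it is attributed to Jensen), but its proof of the generalization to $\tau$-Jensen-completeness specializes to give the argument, and the key move avoids your difficulty entirely. Rather than seeking a $\sigma$-closed dense subset of $\PPP$, one defines a new poset $\QQQ$ whose conditions are \emph{pairs} $(M,g)$ with $M\in C$ and $g$ an $(M,\PPP)$-generic filter, ordered by $(N,h)\leq(M,g)$ iff $N\supseteq M$ and $h\cap M=g$. Closure of $\QQQ$ is then immediate: given a descending sequence $\langle(M_n,g_n)\rangle$, the pair $(\bigcup M_n,\bigcup g_n)$ is a condition and a lower bound, with no need to match up unrelated models. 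The work shifts to showing $\QQQ$ is forcing equivalent to $\PPP$, which is done by passing to the Boolean completion of $\PPP$ and checking that $(M,g)\mapsto\inf g$ is a dense embedding. Your instinct that one must ``remember'' more data with each condition is exactly right; the clean way to implement it is to make the model and the generic filter part of the condition itself.
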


We will generalize a version of this lemma to larger cardinals, and, in fact, characterize directed closure (see Lemma \ref{lem_GeneralizeJensenComplete2} below).\footnote{Note that $\sigma$-closure is equivalent to $\sigma$-directed closure, so the distinction is only important at larger cardinals.}  However there are a few technicalities to address. Note that for any $W \in\Poti{\HH{\theta}}{\omega_1}$, the fact that $W$ is countable ensures that there always exist $(W,\PPP)$-generic filters, regardless of what $\PPP$ is.  In particular, the phrase \anf{\emph{\dots every $(W,\PPP)$-generic filter  \dots}} is never vacuous, if $W$ is countable.  
 Of course, for uncountable $W$, it may happen that (depending on the poset $\PPP$) there do not exist any $(W,\PPP)$-generic filters at all; e.g.\ if $W \prec\HH{\theta}$ and $\omega_1 \subseteq W$, then there does not exist a $(W,\Col{\omega}{\omega_1})$-generic filter.

\begin{definition}\label{def_JensenComplete}
 Given a regular uncountable cardinal $\tau$, a poset $\PPP$ is $\boldsymbol{\tau}$\textbf{-Jensen-complete} if the following statements hold for all sufficiently large regular cardinals $\theta$: 
 \begin{enumerate}
  \item\label{item_FilterExists2} For every $p \in \PPP$, there are stationarily-many $W \in\POTI{\HH{\theta}}{\tau}$ with the property that there exists a $(W,\PPP)$-generic filter including $p$. 
  
 \item\label{item_LowerBound} For all but non-stationarily many $W \in\POTI{\HH{\theta}}{\tau}$, every $(W,\PPP)$-generic filter has a lower bound in $\PPP$.\footnote{Note that this clause is allowed to be \textbf{vacuously} true for some elements $W$ of $\POTI{\HH{\theta}}{\tau}$, even for stationarily-many such sets  $W$.}
\end{enumerate}
\end{definition}

\begin{remark}
Note that clause \eqref{item_FilterExists2} of Definition \ref{def_JensenComplete} always holds true for $\tau = \omega_1$, and is hence redundant in that case.  In particular, for $\tau = \omega_1$, Definition \ref{def_JensenComplete} is equivalent to Jensen's definition of completeness.  
\end{remark}

\begin{remark}
 In combination, the clauses \eqref{item_FilterExists2} and \eqref{item_LowerBound} of Definition \ref{def_JensenComplete} imply that the poset $\PPP$ is \emph{totally proper} on a stationary subset of $\POTI{\HH{\theta}}{\tau}$; i.e.\ that there are stationarily-many $W \in \POTI{\HH{\theta}}{\tau}$ such that every condition in $\PPP\cap W$ can be extended to a $(W,\PPP)$-total master condition in the sense of Definition \ref{def_GenericAndTotalMaster}. 
   This conclusion, however, is strictly weaker than $\tau$-Jensen-completeness, since (for example with $\tau = \omega_1$) shooting a club through a bistationary subset of $\omega_1$ has the latter property but is not ${<}\omega_1$-closed. 
    In the case $\tau = \omega_2$, if $2^{\omega_1} = \omega_2$, then shooting an $\omega_1$-club through the set $M$ described in Lemma \ref{lem_CardArith_MaximalElement} is $\text{IA}_{\omega_1}$-totally proper, but forces the approachability property to hold at $\omega_2$.  In particular, this forcing destroys the Proper Forcing Axiom, and $\PFA$ is preserved by $\omega_2$-Jensen-complete forcings (by \cite{doi:10.1002/malq.200410101} and Lemma \ref{lem_GeneralizeJensenComplete2} below). 
\end{remark}

\begin{lemma}\label{lem_JensenCompleteClosureSuccessor}
 If $\kappa$ is an infinite cardinal and $\PPP$ is a ${<}\kappa$-closed poset, then clause \eqref{item_FilterExists2} of Definition \ref{def_JensenComplete} holds for $\tau = \kappa^+$ and $\PPP$.  
\end{lemma}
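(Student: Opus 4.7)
The plan is to combine Lemma~\ref{lem_IA_is_stat} with Lemma~\ref{lem_ClosureYieldsGenerics2}(1); the only extra work lies in arranging that the witnesses $W\in\text{IA}_\kappa$ provided by the former automatically satisfy $W\cap\kappa^+\in\kappa^+$, i.e.\ that $W$ belongs to $\POTI{\HH{\theta}}{\kappa^+}$ and not merely to $\Poti{\HH{\theta}}{\kappa^+}$. I focus on the principal case, where $\kappa$ is regular and uncountable; the case $\kappa=\omega$ is a standard application of countable L\"owenheim--Skolem.

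First I would fix $p\in\PPP$, a sufficiently large regular cardinal $\theta$, a well-ordering $\lhd$ of $\HH{\theta}$, and, crucially, a sequence $\vec{f}=\seq{f_\beta}{\kappa\le\beta<\kappa^+}$ in which each $f_\beta\colon\kappa\to\beta$ is a surjection. To verify stationarity, let $F\colon[\HH{\theta}]^{{<}\omega}\to\HH{\theta}$ be arbitrary, and apply Lemma~\ref{lem_IA_is_stat} to the structure $\mathfrak{A}=(\HH{\theta},\in,\lhd,\PPP,p,\vec{f},F)$ to obtain some $W\in\text{IA}_\kappa$ with $W\prec\mathfrak{A}$. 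By construction, $W$ is closed under $F$ and contains $p$, and by the remark following Definition~\ref{def_IA} it satisfies $\kappa\subseteq W$.

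The inclusion of $\vec{f}$ now forces $W\cap\kappa^+$ to be transitive: given $\beta\in W\cap\kappa^+$, either $\beta<\kappa\subseteq W$ (and so $\beta\subseteq W$), or else $\beta\ge\kappa$, in which case elementarity hands us $f_\beta\in W$, and any $\alpha<\beta$ is of the form $f_\beta(\xi)$ for some $\xi<\kappa\subseteq W$, so $\alpha\in W$. Since $\betrag{W}\le\kappa<\kappa^+$, this shows $W\cap\kappa^+\in\kappa^+$, i.e.\ $W\in\POTI{\HH{\theta}}{\kappa^+}$.

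Finally, because $\PPP$ is ${<}\kappa$-closed, $W\in\text{IA}_\kappa$, and $p\in\PPP\cap W$, Lemma~\ref{lem_ClosureYieldsGenerics2}(1) produces a $(W,\PPP)$-generic filter containing $p$. Since $F$ was arbitrary, these $W$ form a stationary subset of $\POTI{\HH{\theta}}{\kappa^+}$, which is precisely clause~(1) of Definition~\ref{def_JensenComplete} for $\tau=\kappa^+$. There is no serious obstacle; the only delicate point is arranging transitivity of $W\cap\kappa^+$, and this is taken care of by incorporating the surjections $\vec{f}$ into the Skolem structure.
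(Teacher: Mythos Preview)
Your proof is correct and follows the same overall strategy as the paper, which simply says the lemma follows immediately from Lemmas~\ref{lem_IA_is_stat} and~\ref{lem_ClosureYieldsGenerics2}. The one point you carefully address---that the $W$'s produced by Lemma~\ref{lem_IA_is_stat} must land in $\POTI{\HH{\theta}}{\kappa^+}$ rather than merely $\Poti{\HH{\theta}}{\kappa^+}$---is handled in the paper implicitly via the remark preceding Lemma~\ref{lem_IA_is_stat} that $\POTI{\HH{\theta}}{\tau}$ contains a club (in the sense of Jech), so intersecting with the stationary class $\text{IA}_\kappa$ already yields stationarily many $W$ with $W\cap\kappa^+\in\kappa^+$. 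Your alternative of building the surjections $\vec{f}$ into the Skolem structure achieves the same thing more explicitly and is a perfectly good substitute; it also makes clear that the only parameter needed beyond $p$ and $\PPP$ is something coding bijections below $\kappa^+$. Your separate treatment of $\kappa=\omega$ is appropriate, since Lemmas~\ref{lem_IA_is_stat} and~\ref{lem_ClosureYieldsGenerics2} as stated require $\kappa$ to be regular and uncountable.
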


\begin{proof}
 This follows immediately from Lemmas \ref{lem_IA_is_stat} and \ref{lem_ClosureYieldsGenerics2}.
\end{proof}

Next, we state our generalization of Jensen's lemma. Its Corollary \ref{cor_MainCor} will be used in the proof of Theorem \ref{thm_PropertiesOfHytRautPoset} below.

\begin{lemma}\label{lem_GeneralizeJensenComplete2}
 Given a poset $\mathbb{P}$ and a successor cardinal $\tau$, the following statements are equivalent: 
 \begin{enumerate}
   \item The poset $\PPP$ is forcing equivalent to a ${<}\tau$-directed closed poset. 
   
   \item The poset $\PPP$ is forcing equivalent to a $\tau$-Jensen-complete poset. 
\end{enumerate}
\end{lemma}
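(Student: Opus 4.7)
The plan is to prove both implications of Lemma~\ref{lem_GeneralizeJensenComplete2}. The implication $(1)\Rightarrow(2)$ reduces to showing that any ${<}\tau$-directed closed poset $\QQQ$ is itself $\tau$-Jensen-complete. Writing $\tau=\kappa^+$, note that ${<}\tau$-directed closure implies ${<}\tau$-closure, so clause~(1) of Definition~\ref{def_JensenComplete} follows from Lemma~\ref{lem_JensenCompleteClosureSuccessor}. For clause~(2), I would observe that every $(W,\QQQ)$-generic filter $g$ with $W\in\POTI{\HH{\theta}}{\tau}$ is a downward directed subset of $\QQQ\cap W$ of cardinality less than $\tau$, hence possesses a lower bound in $\QQQ$.

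For the converse, assume $\PPP$ is $\tau$-Jensen-complete, fix a sufficiently large regular cardinal $\theta$, and use clause~(2) to fix a Jech-club $C\subseteq\POTI{\HH{\theta}}{\tau}$ of submodels $W\prec(\HH{\theta},\in,\PPP)$ for which every $(W,\PPP)$-generic filter has a lower bound in $\PPP$. Define
\[
  \QQQ ~ = ~ \Set{(W,g)}{W\in C \text{ and } g\text{ is a }(W,\PPP)\text{-generic filter}},
\]
ordered by $(W',g')\leq_{\QQQ}(W,g)$ if and only if $W\subseteq W'$ and $g\subseteq g'$. Given a directed family $\seq{(W_j,g_j)}{j\in J}$ in $\QQQ$ with $|J|<\tau$, I claim $(W,g)$ with $W=\bigcup_{j\in J}W_j$ and $g=\bigcup_{j\in J}g_j$ is a common lower bound in $\QQQ$: $|W|<\tau$, $W\prec\HH{\theta}$ follows from Tarski--Vaught via the directedness of the family, and $W\in C$ because Jech-clubs in $\POTI{\HH{\theta}}{\tau}$ are closed under directed unions of size ${<}\tau$ (one verifies this by recursively constructing, inside the directed family, an increasing continuous chain of length $|J|$ whose union is $W$). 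Moreover, $g$ is a $(W,\PPP)$-generic filter, since any dense $D\in W$ lies in some $W_j$ and is therefore met by $g_j\subseteq g$.

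The main obstacle is proving that $\PPP$ and $\QQQ$ are forcing equivalent. I plan to establish a mutually inverse correspondence: for a $\PPP$-generic filter $G$ over $\VV$, set $H_G=\Set{(W,g)\in\QQQ}{g\subseteq G}$; for a $\QQQ$-generic filter $H$, set $G_H=\bigcup\Set{g}{(W,g)\in H}$. First, the sets $E_D=\Set{(W,g)\in\QQQ}{D\in W,~g\cap D\neq\emptyset}$ are dense in $\QQQ$ for every dense $D\subseteq\PPP$ in $\VV$, as verified by enlarging $W$ via Jech-closure of $C$ to contain $D$ and then extending $g$ to a $(W,\PPP)$-generic filter; genericity of $H$ thus yields $\PPP$-genericity of $G_H$, while additional dense sets of the form $\Set{(W,g)\in\QQQ}{q\in W}$ ensure that $G_H$ is upward closed. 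Conversely, for a dense $E\subseteq\QQQ$ in $\VV$ and $p\in\PPP$, I apply clause~(1) to find $(W_0,g_0)\in\QQQ$ with $p\in g_0$, extend via density of $E$ to $(W_1,g_1)\in E$ refining $(W_0,g_0)$, and let $r$ be a lower bound of $g_1$ (provided by $W_1\in C$); then $r\leq p$, and the set of such $r$ is dense in $\PPP$. Since $G$ meets this set, picking $r\in G$ makes $g_1\subseteq G$ by upward closure of $G$, so $(W_1,g_1)\in H_G\cap E$ and $H_G$ is $\QQQ$-generic. An analogous density argument applied to each $p\in G$ produces some $(W,g)\in H_G$ with $p\in g$, verifying $G_{H_G}=G$ and completing the proof.
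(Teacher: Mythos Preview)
Your overall strategy matches the paper's: for $(1)\Rightarrow(2)$ you argue exactly as they do, and for $(2)\Rightarrow(1)$ you define the same auxiliary poset $\QQQ$ of pairs $(W,g)$ and aim to show it is ${<}\tau$-directed closed and forcing equivalent to $\PPP$. However, two steps in your execution are not solid.

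\textbf{Directed closure of $\QQQ$.} Your justification that an arbitrary Jech-club $C\subseteq\POTI{\HH{\theta}}{\tau}$ is closed under directed unions of size ${<}\tau$ does not go through: your proposed recursion ``inside the directed family'' breaks at limit stages, since directedness only yields upper bounds for \emph{finite} subfamilies, so you cannot continue the chain past a limit while staying inside the family. The paper avoids this entirely by taking the club to be of the form $C_F=\{W:W\text{ is closed under }F\}$ for a function $F:[\HH{\theta}]^{<\omega}\to\HH{\theta}$; such a club is obviously closed under directed unions (being closed under $F$ is preserved by any directed union). You should make the same choice.

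\textbf{Forcing equivalence.} Your generic-filter correspondence is a legitimate route in principle, but you have not verified that $H_G$ is a \emph{filter}, only that it meets every dense set. Downward directedness of $H_G$ is not automatic: given $(W_0,g_0),(W_1,g_1)\in H_G$ you need a common refinement $(W,g)$ with $g\subseteq G$, and merely knowing $g_0\cup g_1\subseteq G$ does not hand you such a $(W,g)$ (a lower bound of $g_0$ need not lie in $G$). You also omit the verification of $H_{G_H}=H$. The paper's approach is cleaner and sidesteps these issues: pass to the Boolean completion of $\PPP$, and define $e(W,g)=\inf g$ (which is a nonzero element precisely because $W\in C_F$). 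One then checks directly that $e$ is order-preserving, preserves incompatibility (using clause~\eqref{item_FilterExists2} to build a common extension over a large $W$ containing a witness $p\le e(W_0,g_0),e(W_1,g_1)$), and has dense range (again by clause~\eqref{item_FilterExists2}). This dense-embedding argument is short and avoids the filter-verification difficulties.
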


\begin{proof}
 First, assume that $\PPP$ is  ${<}\tau$-directed closed. 
   Then, in particular, $\PPP$ is ${<}\tau$-closed, and hence Lemma \ref{lem_JensenCompleteClosureSuccessor} ensures that clause \eqref{item_FilterExists2} of Definition \ref{def_JensenComplete} holds for $\PPP$. 
    But then the directed closure of $\PPP$ ensures that \emph{any} $(W,\PPP)$-generic filter for \emph{any} $W \in \POTI{\HH{\theta}}{\tau}$ has a lower bound in $\PPP$, and hence clause \eqref{item_LowerBound} of Definition \ref{def_JensenComplete} holds for $\PPP$ as well.  This shows that $\PPP$ is $\tau$-Jensen-complete.

 Now, suppose that $\PPP$ is $\tau$-Jensen-complete. Let $\map{F}{[\HH{\theta}]^{{<}\omega}}{\HH{\theta}}$ generate a club witnessing clause \eqref{item_LowerBound} of Definition \ref{def_JensenComplete}, i.e.\ whenever $W \in\POTI{\HH{\theta}}{\tau}$ and $W$ is closed under $F$, then any $(W,\PPP)$-generic filter has a lower bound. 
  We may assume that $F$ also codes a well-ordering $\lhd$ of $\HH{\theta}$, i.e.\ if $W$ is closed under $F$,  then $W \prec (\HH{\theta},\in,\lhd)$) holds.

 Define a poset $\QQQ$, whose conditions are pairs $(M,g)$  satisfying the following statements: 
 \begin{itemize}
   \item $M \in\POTI{\HH{\theta}}{\tau}$. 
   
  \item $M$ is closed under $F$. 
  
  \item $g \subseteq M \cap \PPP$ is an $(M,\PPP)$-generic filter.  
\end{itemize}
 and whose ordering is given by: $$(N,h) \leq_\QQQ(M,g)  ~ \Longleftrightarrow ~  N \supseteq M ~ \wedge ~ h \cap M = g.$$   Note that $\leq_\QQQ$ is transitive and clause \eqref{item_FilterExists2} of Definition \ref{def_JensenComplete} ensures that $\QQQ$ is nonempty.

\begin{claim}\label{clm_DirectedClosed}
 $\mathbb{Q}$ is ${<}\tau$-directed closed.
\end{claim}

\begin{proof}[Proof of Claim \ref{clm_DirectedClosed}]
  Let $\Set{(M_i,g_i)}{i \in I}$ be a directed set of conditions in $\QQQ$ with $\betrag{I}<\tau$. 
  Set $M= \bigcup_{i \in I} M_i$ and $g=\bigcup_{i \in I} g_i$.  We will show that $(M,g)$ is a  condition in $\QQQ$ below all $(M_i,g_i)$. 
  
   The regularity of $\tau$ ensures that $M \in \POTI{\HH{\theta}}{\tau}$, and $M$ is closed under $F$,  because each $M_i$ is closed under $F$ and the collection $\seq{M_i}{i \in I}$ is $\subseteq$-directed. 
   In addition, we have $g\subseteq M\cap\PPP$ and $g$ clearly has the property that $D \cap g \neq \emptyset$ for every dense $D \in M$, because each such $D$ lies in some $M_i$ and $g_i\subseteq g$ is an $(M_i,\PPP)$-generic filter. Finally, the fact that $g$ is a filter follows easily from the fact that the given collection is directed and each $g_i$ is a filter. This shows that $(M,g)$ is a condition in $\QQQ$. 

 Now, fix $i \in I$. Then $M \supseteq M_i$, $g \cap M_i$ is a filter on $M_i \cap \PPP$, and $g \cap M_i\supseteq g_i$.     But since $g_i$ is $(M_i,\PPP)$-generic, we know that $g_i$ is a $\subseteq$-maximal filter on $M_i \cap \PPP$.  In particular, we can conclude that $g \cap M_i = g_i$. This computation shows that $(M,g)\leq_\QQQ(M_i,g_i)$.  
\end{proof}

\begin{claim}\label{clm_ForcingEquiv}
  The poset $\QQQ$ is forcing equivalent to $\PPP$.
\end{claim}

\begin{proof}[Proof of Claim \ref{clm_ForcingEquiv}]
 It is easy to see that the boolean completions of $\tau$-Jensen-complete posets are themselves $\tau$-Jensen-complete. Therefore, we may assume that $\PPP$ is a complete boolean algebra. 
   For each condition $(M,g)$ in $\QQQ$, let $p_{M,g}$ be the $\PPP$-greatest lower bound of $g$. This  conditions exists and is non-zero, because $M$ is closed under $F$, $g$ is $(M,\PPP)$-generic, $\PPP$ is a complete boolean algebra, and because of clause \eqref{item_LowerBound} of Definition \ref{def_JensenComplete}.  In the following, we will show that  the map $$\Map{e}{\QQQ}{\PPP}{(M,g)}{p_{M,g}}$$ is a dense embedding, which will finish the proof of the claim. 
   
   First, we show that  $e$ is order-preserving.  Suppose that $(N,h) \leq_\QQQ (M,g)$. Then $N \supseteq M$ and $g = h \cap M$.  Since $g \subseteq h$ and $p_{N,h}$ is a lower bound of $h$, it follows that $p_{N,h}$ is also a lower bound of $g$.  But $p_{M,g}$ is the greatest lower bound of $g$, and hence $$e(N,h) ~ = ~ p_{N,h} ~ \leq_\PPP ~ p_{M,g} ~ = ~ e(M,g).$$ 
 
  Next, we show that $e$ preserves incompatibility. 
    Suppose $(M_0,g_0)$ and $(M_1,g_1)$ are conditions in $\QQQ$ with the property that there is a condition $p$ in $\PPP$ that extends both $e(M_0,g_0)$ and $e(M_1,g_1)$.  
  By clause \eqref{item_FilterExists2} of Definition \ref{def_JensenComplete}, there we can find $W \in \POTI{\HH{\theta}}{\tau}$ such that $p,g_0, g_1,M_0,M_1\in W$, $W$ is closed under $F$, and there exists a $(W,\PPP)$-generic filter $G$ with $p \in G$. 
  Since $W \cap \tau$ is transitive and $\betrag{M_i} < \tau$ for all $i<2$, it follows that $M_0\cup M_1 \subseteq W$. Furthermore, since $p$ is below both $p_{M_0,g_0}$ and $p_{M_1,g_1}$ and  $M_i \cap \PPP\subseteq W \cap \PPP$ for all $i<2$, the fact that $g_0$ and $g_1$ are maximal filters in $M_0\cap\PPP$ and $M_1\cap\PPP$, respectively, implies that $G \cap M_0 = g_0$ and $G \cap M_1 = g_1$. Hence $(W,G)$ is a condition in $\QQQ$ that lies below both $(M_0,g_0)$ and $(M_1,g_1)$. 

  Finally, we show that the range of $e$ is dense in $\PPP$.   Fix a condition $p$  in $\PPP$.  By clause \eqref{item_FilterExists2} of Definition \ref{def_JensenComplete}, there is a $W \in\POTI{\HH{\theta}}{\tau}$ such that $W$ is closed under $F$, and there exists a $(W,\PPP)$-generic filter $G$ with $p \in G$.  Then $(W,G)$ is a condition in $\QQQ$, and $e(W,g)=p_{W,G}$ is stronger than $p$.  
\end{proof}

 This completes the proof of the lemma. 
\end{proof}

\begin{corollary}
 Given a successor cardinal $\tau$, all $\tau$-Jensen-complete posets are ${<}\tau$-distributive. \qed
\end{corollary}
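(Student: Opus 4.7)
The plan is to deduce the corollary directly from Lemma \ref{lem_GeneralizeJensenComplete2} together with the observation that ${<}\tau$-distributivity is a property of the generic extension and is therefore preserved under forcing equivalence. So the argument is essentially a short three-step reduction rather than a new combinatorial construction.

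First, I would apply Lemma \ref{lem_GeneralizeJensenComplete2} to the given $\tau$-Jensen-complete poset $\PPP$ to obtain a poset $\QQQ$ that is forcing equivalent to $\PPP$ and is ${<}\tau$-directed closed. Second, I would note that every ${<}\tau$-directed closed poset is in particular ${<}\tau$-closed: any strictly decreasing sequence of length ${<}\tau$ is linearly ordered, hence directed, hence has a lower bound. Since ${<}\tau$-closed posets are ${<}\tau$-distributive, the poset $\QQQ$ adds no new ${<}\tau$-sequences of ground model elements, and in particular no new functions $\map{f}{\alpha}{\VV}$ for any $\alpha < \tau$.

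Third, I would invoke the fact that ${<}\tau$-distributivity depends only on the generic extension: a poset $\PPP$ is ${<}\tau$-distributive if and only if $\VV^\PPP$ contains no new ${<}\tau$-sequence of ordinals. Since $\PPP$ and $\QQQ$ are forcing equivalent, they produce the same generic extensions (up to isomorphism of boolean completions), and so $\PPP$ is ${<}\tau$-distributive as well. This completes the argument.

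There is no genuine obstacle here; the only thing to be a little careful about is the invocation of forcing equivalence, since \anf{forcing equivalent} in Lemma \ref{lem_GeneralizeJensenComplete2} means that $\PPP$ and $\QQQ$ have isomorphic boolean completions (equivalently, dense embeddings into a common poset). Under either formulation, the class of new ${<}\tau$-sequences of ordinals added is the same for $\PPP$-generic and $\QQQ$-generic extensions, so ${<}\tau$-distributivity transfers from $\QQQ$ to $\PPP$ without issue.
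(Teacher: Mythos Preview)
Your proposal is correct and follows exactly the route the paper intends: the corollary is marked with a \qed\ immediately after its statement, so the paper is treating it as an instant consequence of Lemma~\ref{lem_GeneralizeJensenComplete2} together with the fact that ${<}\tau$-distributivity is invariant under forcing equivalence. Your three-step reduction spells this out explicitly and accurately.
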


\begin{remark}
Another common way to verify the ${<}\tau$-distributivity of a given poset $\PPP$ is the following weaker version of $\tau$-Jensen completeness: if for every $p \in \PPP$, there are stationarily-many $W \in\POTI{\HH{\theta}}{\tau}$ such that there is a $(W,\PPP)$-\emph{total} master condition below $p$ (see Definition \ref{def_GenericAndTotalMaster}), then $\PPP$ is ${<}\tau$-distributive.  
Note that this weaker version would not suffice for our purposes, however, because we seem to need ${<}\tau$-directed closure (or a close approximation of it) to prove Theorem \ref{thm_PropertiesOfHytRautPoset}.
\end{remark}

\begin{corollary}\label{cor_MainCor}
  Let $\kappa$ be an infinite regular cardinal and set $\tau = \kappa^+$. 
  If $\PPP$ is ${<}\kappa$-closed poset with the property that  for all but non-stationarily many $W \in\POTI{\HH{\theta}}{\tau}$, every $(W,\PPP)$-generic filter has a lower bound in $\PPP$, then the poset $\PPP$ is forcing equivalent to a ${<}\tau$-directed closed poset.
\end{corollary}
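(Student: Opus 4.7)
The plan is to simply observe that the hypotheses of the corollary imply that $\PPP$ is $\tau$-Jensen-complete, so that Lemma \ref{lem_GeneralizeJensenComplete2} yields the conclusion directly. Indeed, $\tau$-Jensen-completeness consists of the two clauses in Definition \ref{def_JensenComplete}, and we have both at hand: clause \eqref{item_LowerBound} is the statement that, for all but non-stationarily many $W \in \POTI{\HH{\theta}}{\tau}$, every $(W,\PPP)$-generic filter has a lower bound in $\PPP$, which is precisely the hypothesis of the corollary.

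For clause \eqref{item_FilterExists2}, I would invoke Lemma \ref{lem_JensenCompleteClosureSuccessor}, which states that if $\PPP$ is ${<}\kappa$-closed, then clause \eqref{item_FilterExists2} of Definition \ref{def_JensenComplete} automatically holds for $\tau = \kappa^+$. Since the corollary assumes $\PPP$ is ${<}\kappa$-closed, this gives us what we need; in fact, the proof of Lemma \ref{lem_JensenCompleteClosureSuccessor} goes through Lemma \ref{lem_IA_is_stat} (giving stationarily many $W \in \text{IA}_\kappa$ in $\Poti{\HH{\theta}}{\tau}$) and Lemma \ref{lem_ClosureYieldsGenerics2}(1) (producing, for any $p \in \PPP \cap W$ with $W \in \text{IA}_\kappa$, a $(W,\PPP)$-generic filter containing $p$), and we note that $\text{IA}_\kappa \subseteq \POTI{\HH{\theta}}{\tau}$ since members of $\text{IA}_\kappa$ that are elementary in $(\HH{\theta},\in)$ contain $\kappa$ as a subset.

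Putting these together, $\PPP$ satisfies both clauses of Definition \ref{def_JensenComplete} for $\tau = \kappa^+$, and is therefore $\tau$-Jensen-complete. Since $\tau = \kappa^+$ is a successor cardinal, Lemma \ref{lem_GeneralizeJensenComplete2} applies and yields that $\PPP$ is forcing equivalent to a ${<}\tau$-directed closed poset, completing the proof. There is no real obstacle here — the work was done in the preceding lemma, and the corollary is essentially a repackaging for the form in which it will be applied later in Theorem \ref{thm_PropertiesOfHytRautPoset}.
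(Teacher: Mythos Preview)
Your proposal is correct and follows exactly the same approach as the paper: use Lemma~\ref{lem_JensenCompleteClosureSuccessor} (via ${<}\kappa$-closure) to get clause~\eqref{item_FilterExists2}, note clause~\eqref{item_LowerBound} is the hypothesis, conclude $\tau$-Jensen-completeness, and apply Lemma~\ref{lem_GeneralizeJensenComplete2}. The extra unpacking of Lemma~\ref{lem_JensenCompleteClosureSuccessor} through $\text{IA}_\kappa$ is accurate but not strictly needed, since that lemma already delivers clause~\eqref{item_FilterExists2} directly.
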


\begin{proof}
 By Lemma \ref{lem_JensenCompleteClosureSuccessor}, the ${<}\kappa$-closure of $\PPP$ ensures that clause \eqref{item_FilterExists2} of Definition \ref{def_JensenComplete} holds.  Since clause \eqref{item_LowerBound} of Definition \ref{def_JensenComplete} holds by assumption, this implies $\mathbb{P}$ is $\tau$-Jensen-complete and  Lemma \ref{lem_GeneralizeJensenComplete2} yields the desired conclusion. 
\end{proof}

In particular, if a poset $\PPP$ satisfies the assumptions of the above corollary for $\kappa=\omega_1$, then forcing with $\PPP$ preserves all standard forcing axioms.


\section{The main technical result}\label{sec_MainTechResult}

  In this section, we will prove the main technical result of our paper.
   It directly extends the main results of \cite{MR1877015} and \cite{MR1222536}. In the next section, we will use it to prove the two theorems stated in the introduction.

 \begin{definition}
  If $\kappa$ be an infinite regular cardinal and let $S\subseteq S^{\kappa^+}_\kappa$. Then we let $T(S)$ denote the tree that consists of all $t\in{}^{{<}\kappa^+}\kappa^+$ such that $\dom{t}$ is a successor ordinal, $\ran{t}\subseteq S$, $t$ is strictly increasing, and $t$ is continuous at all points of cofinality $\kappa$ in its domain and is ordered by end-extension.    
 \end{definition}

 Note that, in the situation of the above definition, the tree $T(S)$ has height $\kappa^+$ and contains a cofinal branch if and only if the set $S$ contains a $\kappa$-club.

 We are now ready to state the aspired result.

\begin{theorem}\label{thm_PropertiesOfHytRautPoset}
Given an infinite regular cardinal $\kappa$, there is a partial order $\PPP$ with the following properties: 
\begin{enumerate}
 \item\label{item_Jensen_complete} $\PPP$ is $\kappa^+$-Jensen complete.\footnote{In particular, Lemma \ref{lem_GeneralizeJensenComplete2} shows that the poset $\PPP$ is forcing equivalent to a ${<}\kappa^+$-directed closed poset.}

 \item\label{item_ChainCondition} $\PPP$ satisfies the $(2^\kappa)^+$-chain condition.

 \item If $G$ is $\PPP$-generic over $\VV$, then, in $V[G]$, there is a subtree $T$ of ${}^{{<}\kappa^+}\kappa^+$ of height $\kappa^+$ without cofinal branches such that the following statements hold: 
 \begin{enumerate}
  \item\label{item_ExistsTreeForApproach} If $S$ is bistationary in $S^{\kappa^+}_\kappa$ and  $S^{\kappa^+}_\kappa \setminus S$ contains a stationary set in $I[\kappa^+]$, then there is an order-preserving function from $T(S)$ to $T$. 

  \item\label{item_MaximalSet} Assume that $\kappa^{{<}\kappa} \leq \kappa^+$ holds in $\VV$. If $M\in\VV$ is a maximum element of $I[\kappa^+]\cap\POT{S^{\kappa^+}_\kappa}$ mod $\NS{}$ in $\VV$,\footnote{Such a subset $M$ exists by Lemma \ref{lem_CardArith_MaximalElement}.} then the following statements hold in $\VV[G]$: 
  \begin{enumerate}
   \item $M$ is a maximum element of $I[\kappa^+] \cap \POT{S^{\kappa^+}_\kappa}$ mod $\NS{}$. 
   
   \item If $S$ is a bistationary in $S^{\kappa^+}_\kappa$ and $M \setminus S$ is stationary, then there is an order-preserving function from $T(S)$ to $T$.  
  \end{enumerate}
  
 \end{enumerate}
\end{enumerate}
\end{theorem}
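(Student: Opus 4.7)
The plan is to build $\PPP = \PPP_0 * \dot{\PPP}_1$ in two phases, generalizing the construction of Hyttinen--Rautila~\cite{MR1877015}. The first factor $\PPP_0$ is a natural ${<}\kappa^+$-directed closed forcing that generically adjoins a subtree $T \subseteq {}^{{<}\kappa^+}\kappa^+$ of height $\kappa^+$: conditions are ``tree approximations'' of size ${<}\kappa^+$ and bounded level, ordered by end-extension. Over $\VV^{\PPP_0}$, the poset $\dot{\PPP}_1$ is an iteration with ${<}\kappa^+$-supports whose bookkeeping handles every pair $(\dot{S}, \dot{A})$ for which $\dot{S}$ is forced bistationary in $S^{\kappa^+}_\kappa$ and $\dot{A} \subseteq S^{\kappa^+}_\kappa \setminus \dot{S}$ is forced stationary with $\dot{A} \in I[\kappa^+]$; the iterand $\dot{\QQQ}$ at each such stage is the natural ${<}\kappa$-closed poset of partial order-preserving maps $T(\dot{S}) \to T$ of size ${<}\kappa^+$, ordered by extension.

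The preservation properties then decompose as follows. The $(2^\kappa)^+$-chain condition follows from a routine $\Delta$-system argument using that each condition has size ${<}\kappa^+$ and supports have size $\leq\kappa$. For $\kappa^+$-Jensen-completeness, clause~\eqref{item_FilterExists2} of Definition~\ref{def_JensenComplete} follows from the ${<}\kappa$-closure of the iteration via Lemma~\ref{lem_ClosureYieldsGenerics2}. Clause~\eqref{item_LowerBound} is verified on the club of $W \in \POTI{\HH{\theta}}{\kappa^+}$ whose intersection $\delta = W \cap \kappa^+$ avoids every $\dot{S}$ named by the bookkeeping in $W$: for such $W$, any $(W,\PPP)$-generic filter $g$ has a lower bound given coordinatewise by taking unions, because no cofinal-limit node at level $\delta$ in $T(\dot{S})$ needs to be interpolated into the partial map built from $g$. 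Lemma~\ref{lem_GeneralizeJensenComplete2} then provides the equivalence to a ${<}\kappa^+$-directed closed forcing asserted in the footnote of clause~\eqref{item_Jensen_complete}.

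The main obstacle is proving that $T$ has no cofinal branch in $\VV[G]$. Following~\cite{MR1877015}, suppose $\dot{b}$ is a $\PPP$-name for such a branch below some $p$. Using $\text{IA}_\kappa$-properness of the iteration, established stage-by-stage with the aid of Lemma~\ref{lem_IA_projectiveOverAppIdeal}, build $W = \bigcup_{\alpha<\kappa} W_\alpha \in \text{IA}_\kappa$ elementary in $\HH{\theta}$ containing $p$ and $\dot{b}$, together with a fusion-style descending sequence of $(W_\alpha,\PPP)$-master conditions whose lower bound $q$ forces $\dot{b} \restriction \delta$ (for $\delta = W \cap \kappa^+$) to be a level-$\delta$ node of $T$; the genericity of $\PPP_0$ rules out ground-model witnesses for this node, and the iteratively added order-preserving maps for appropriate $\dot{S}$ rule out generic ones, yielding a contradiction. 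Lemma~\ref{lem_ProperForIApreservesApproach} guarantees preservation of stationary sets in $I[\kappa^+]\cap\POT{S^{\kappa^+}_\kappa}$ through the iteration, so clause~\eqref{item_ExistsTreeForApproach} is immediate from genericity. For clause~\eqref{item_MaximalSet}, the ${<}\kappa^+$-distributivity of $\PPP$ (a consequence of Jensen-completeness) combined with a ground-model enumeration $\vec{z}^*\in\VV$ of $[\kappa^+]^{{<}\kappa}$ (available under $\kappa^{{<}\kappa}\leq\kappa^+$) shows that every generator $A_{\vec{z}}$ of $I[\kappa^+]^{\VV[G]}$ is contained mod $\NS{}$ in $A_{\vec{z}^*}$, so $M$ remains maximum mod $\NS{}$; part~(ii) reduces to~\eqref{item_ExistsTreeForApproach} since $M \setminus S \in I[\kappa^+]$ furnishes the required stationary set in the complement of $S$.
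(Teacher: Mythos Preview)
Your overall architecture matches the paper's, but the argument for clause~\eqref{item_LowerBound} of $\kappa^+$-Jensen completeness has a genuine gap, and this is exactly where the main work lies.

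You claim clause~\eqref{item_LowerBound} holds on the ``club'' of $W$ such that $\delta = W\cap\kappa^+$ avoids every $\dot{S}$ named by the bookkeeping in $W$. This set is not a club: each $\dot{S}_\alpha$ is (forced to be) a \emph{stationary} subset of $S^{\kappa^+}_\kappa$, so $\{W : W\cap\kappa^+\notin\dot{S}_\alpha\}$ fails to contain a club even for a single $\alpha\in W$, let alone all of them. The paper resolves this by a different mechanism entirely. The first factor is not a generic ``tree approximation'' poset; it is the poset $\QQQ_0$ of partial functions $f$ with $\dom{f}\subseteq S^{\kappa^+}_\kappa$, $|\dom{f}|\leq\kappa$, $f(\delta):\delta\to\delta$, and $f(\delta)\nsubseteq f(\eta)$ for $\delta<\eta$ in $\dom{f}$; the tree is then $\calT(G_0)=\{h : \forall\delta\in S^{\kappa^+}_\kappa\ h\restriction\delta\neq(\bigcup G_0)(\delta)\}$. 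The point of this design is that when building a lower bound $p(f_0,g)$ for a $(W,\PPP)$-generic filter $g$, one is \emph{free to choose} the $0$th coordinate $f_0$ extending $\bigcup g_0$. If $\cof{\delta}=\kappa$, one sets $f_0=(\bigcup g_0)\cup\{(\delta,t)\}$ for some $t:\delta\to\delta$ with $t\restriction\kappa\notin W$. A careful dichotomy lemma (Lemma~\ref{lem_KeyBackgroundLemma_NoTails}) shows that the only obstruction to $p(f_0,g)$ being a condition is the existence of some $\alpha\in W$ at which a condition forces every proper initial segment of $(\bigcup G_0)(\delta)$ into $W$; but the $0$th coordinate of that condition extends $f_0$, so $(\bigcup G_0)(\delta)=t$, contradicting $t\restriction\kappa\notin W$. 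This yields lower bounds for \emph{every} $(W,\PPP)$-generic filter over \emph{every} $W\in\POTI{\HH{\theta}}{\kappa^+}$, which is what clause~\eqref{item_LowerBound} requires. Your iterands are also missing the second coordinate $X$ of the conditions $(h,X)\in\PPP(S,G_0)$ (Definition~\ref{def_PosetAddFunction}); this $X$ records initial segments of $(\bigcup G_0)(\alpha)$ and is essential both for the dichotomy lemma and for the density lemma~\ref{lem_DensityLemma}.

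The no-cofinal-branch argument is not a fusion; it reuses the same $f_0$ trick. Given a name $\dot{b}$ for a branch and $W\in\text{IA}_\kappa$, take any $(W,\PPP)$-generic $g\ni p$, read off $t=\dot{b}\restriction\delta_W$ from $g$, and set $f_0=(\bigcup g_0)\cup\{(\delta_W,t)\}$. If $p(f_0,g)$ were a condition it would force $\dot{b}\restriction\delta_W=(\bigcup G_0)(\delta_W)\notin\calT(G_0)$, a contradiction. Otherwise the dichotomy lemma produces $\alpha\in W$ and a condition forcing every proper initial segment of $t$ to be an $\dot{S}_\alpha$-node; one then uses that the tail $\PPP/G_\alpha$ is $\text{IA}_\kappa$-proper (Lemma~\ref{lem_TailsProperIA}) to preserve the stationarity of $S^{\kappa^+}_\kappa\setminus\dot{S}_\alpha$ into the full extension, and finds $\ell<\delta_W$ with $\cof{\ell}=\kappa$, $\ell\notin\dot{S}_\alpha$, closed under $t$, so $t\restriction\ell$ is not an $\dot{S}_\alpha$-node---contradiction. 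Your sketch (``genericity of $\PPP_0$ rules out ground-model witnesses'') does not capture this interplay between the $f_0$-freedom, the $S$-node requirement on $\ran{h}$, and stationary preservation of complements.
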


 Note that the above theorem directly generalizes the main result of \cite{MR1877015}: if $\kappa^{{<}\kappa}=\kappa$ holds, then part \eqref{item_CardArith_AP_fail} of Lemma \ref{lem_CardArith_MaximalElement} shows that $S^{\kappa^+}_\kappa$ is an element of $I[\kappa^+]$, and hence $S^{\kappa^+}_\kappa$ is a maximum element of $I[\kappa^+]\cap\POT{S^{\kappa^+}_\kappa}$ mod $\NS{}$. 
Now, if $G$ is $\PPP$-generic over $\VV$ and $T\in\VV[G]$ is the tree given by the theorem, then there is an order-preserving function from the tree $T(S)$ to $T$ in $\VV[G]$  for every bistationary subset $S$ of $S^{\kappa^+}_\kappa$ in $\VV[G]$. 
 In particular, this shows that $T$ is a \emph{$\kappa$-canary tree} (see {\cite[Definition 3.1]{MR1877015}}) in $\VV[G]$, i.e. if $S$ is a stationary subset of $S^{\kappa^+}_\kappa$ and $\PPP$ is a ${<}\kappa^+$-distributive poset that forces $\kappa^+\setminus S$ to contain a club subset, then forcing with $\PPP$ adds a cofinal branch to $T$.

For the remainder of this section, fix an infinite regular cardinal $\kappa$. Until further notice, we do \textbf{not} make any cardinal arithmetic assumptions. 
 In the following, we closely follow the arguments on pages 1684--1692 of Hyttinen-Rautila in \cite{MR1877015}, which assumed $\GCH$ (in particular, their arguments heavily rely on the assumption $\kappa^{{<}\kappa}=\kappa$). We also follow their notation as closely as possible.

\begin{definition}[\cite{MR1877015}]\label{def_Q_0}
 We let $\QQQ_0$ denote the poset that consists of functions $f$ such that $\dom{f}\subseteq S^{\kappa^+}_\kappa$, $\betrag{\dom{f}} \leq \kappa$, $f(\delta)$ is a function from $\delta$ to $\delta$ for all $\delta \in \dom{f}$, and whenever $\delta < \eta$ are both in the domain of $f$, then $f(\delta) \nsubseteq f(\eta)$,\footnote{Since $\map{f(\delta)}{\delta}{\delta}$ and $\map{f(\eta)}{\eta}{\eta}$, this just means that $f(\delta)$ and $f(\eta)$ disagree at some $\xi < \delta$.} 
 and whose ordering is given by reversed inclusion. 
\end{definition}

\begin{proposition}\label{proposition:Q_0Closure}
 The poset $\QQQ_0$ is ${<}\kappa^+$-directed closed. 
\end{proposition}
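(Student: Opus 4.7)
The plan is to take the union of a directed family and show it is a condition that extends every member of the family. Concretely, given a $\subseteq$-directed collection $\{f_i : i \in I\} \subseteq \QQQ_0$ with $|I| \leq \kappa$, I would define $f = \bigcup_{i \in I} f_i$ and verify that $f \in \QQQ_0$ and $f \leq_{\QQQ_0} f_i$ for every $i$.

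First I would check that $f$ is genuinely a function: if $\delta \in \dom{f_i} \cap \dom{f_j}$, then by directedness there is some $k \in I$ with $f_i, f_j \subseteq f_k$, so $f_i(\delta) = f_k(\delta) = f_j(\delta)$. Hence the union is single-valued. Trivially, $\dom{f} \subseteq S^{\kappa^+}_\kappa$ and each value $f(\delta)$ is a function from $\delta$ to $\delta$. The cardinality bound $|\dom{f}| \leq \kappa$ follows from $|I| \leq \kappa$, each $|\dom{f_i}| \leq \kappa$, and $\kappa \cdot \kappa = \kappa$.

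The only condition requiring a small argument is the incompatibility clause: for $\delta < \eta$ with $\delta, \eta \in \dom{f}$, I need $f(\delta) \nsubseteq f(\eta)$. Pick $i, j \in I$ with $\delta \in \dom{f_i}$ and $\eta \in \dom{f_j}$, and use directedness to choose $k \in I$ with $f_i, f_j \subseteq f_k$. Then $\delta, \eta \in \dom{f_k}$, so the defining property of $f_k \in \QQQ_0$ yields $f_k(\delta) \nsubseteq f_k(\eta)$; since $f(\delta) = f_k(\delta)$ and $f(\eta) = f_k(\eta)$, the conclusion follows. Finally, $f \supseteq f_i$ by construction, so $f \leq_{\QQQ_0} f_i$ for all $i \in I$.

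There is no real obstacle here: the only feature of the ordinal $\kappa^+$ being used is that $\kappa \cdot \kappa = \kappa$, which bounds the size of the domain of the union. Directedness (rather than mere linearity) is precisely what is needed to reconcile the values on overlapping domains and to invoke the incompatibility condition for arbitrary pairs $\delta < \eta$ coming from possibly different members of the family.
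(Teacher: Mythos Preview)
Your proposal is correct and follows essentially the same approach as the paper: form the union of the directed family, observe that directedness ensures it is a well-defined function and that the incompatibility clause $f(\delta)\nsubseteq f(\eta)$ is inherited from some common extension, and bound the size of the domain. The paper's proof is a one-sentence sketch of exactly this argument, while you have spelled out the details.
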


\begin{proof}
 This statement follows directly from the fact that the union $f$ of a coherent collection of conditions in $\QQQ_0$ still has the required property that $f(\eta) \nsubseteq f(\beta)$ for all $\eta < \beta$ in the domain of $f$, and, if the union is of size less than $\kappa^+$, then the domain of $f$ has size less than $\kappa^+$ too. 
\end{proof}

\begin{definition}[\cite{MR1877015}]
 If $G_0$ is $\QQQ_0$-generic over $\VV$, then, in $\VV[G_0]$, we define the following subtree of ${}^{{<}\kappa^+} \kappa^+$: $$\calT(G_0) ~ = ~  \Set{h \in {}^{{<}\kappa^+} \kappa^+}{\forall \delta \in S^{\kappa^+}_\kappa ~ h\restriction\delta \neq ({\textstyle \bigcup G_0})(\delta)}.$$
\end{definition}

In the following, we let $\dot{\calT}(\dot{G}_0)$ denote the canonical $\QQQ_0$-name for $\calT(G_0)$.

\begin{remark}
 In the situation of the above definition, the tree $\mathcal{T}(G_0)$ has height $\kappa^+$, since for any $\beta \in S^{\kappa^+}_\kappa$, the function $f$ with domain $\beta$ and constant value $\beta + 1$ has the property that for all $\delta\in S^{\kappa^+}_\kappa$ with $\delta\leq\dom{f}$, the restriction $f\restriction\delta$ is not a function from $\delta$ to $\delta$ and hence cannot be the same as the function $(\bigcup G_0)(\delta)$. 
\end{remark}

\begin{lemma}\label{lem_PropertiesQ_0}
  If $G_0$ is $\QQQ_0$-generic over $\VV$, then the tree  $\calT(G_0)$ has no cofinal branches in $\VV[G_0]$.
\end{lemma}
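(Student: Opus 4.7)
I will argue by contradiction: assume some $p \in \QQQ_0$ forces that $\dot{b}$ is a cofinal branch through $\dot{\calT}(\dot{G}_0)$, and build a stronger condition $q$ together with a $\delta \in S^{\kappa^+}_\kappa$ such that $q \Vdash_{\QQQ_0} \dot{b}\restriction\check\delta = (\bigcup \dot{G}_0)(\check\delta)$, directly contradicting $\dot{b}\restriction\delta \in \dot{\calT}(\dot{G}_0)$.

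The key step is to recursively construct, using the ${<}\kappa^+$-directed closure of $\QQQ_0$ from Proposition \ref{proposition:Q_0Closure}, a descending sequence $\seq{p_\xi}{\xi\leq\kappa}$ in $\QQQ_0$ with $p_0 = p$ and an increasing, continuous sequence $\seq{\alpha_\xi}{\xi\leq\kappa}$ of ordinals below $\kappa^+$ with $\alpha_0 = 0$, satisfying at each successor step that $p_{\xi+1}\leq p_\xi$ decides $\dot{b}\restriction\alpha_\xi$ as some function $h_\xi$, and that $\alpha_{\xi+1}$ is chosen strictly above $\alpha_\xi$, above $\sup(\ran{h_\xi})$, and above $\sup(\dom{p_{\xi+1}})$. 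At limit stages, $p_\xi$ is taken to be a lower bound of $\seq{p_\eta}{\eta<\xi}$ and $\alpha_\xi=\sup_{\eta<\xi}\alpha_\eta$. Setting $\delta = \alpha_\kappa$, regularity of $\kappa$ gives $\cof{\delta} = \kappa$, so $\delta \in S^{\kappa^+}_\kappa$; and $p_\kappa$ decides $\dot{b}\restriction\delta$ to be $h=\bigcup_{\xi<\kappa}h_\xi$, which maps $\delta$ into $\delta$ by the control on ranges.

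The crucial bookkeeping is that the bound $\alpha_{\xi+1} > \sup(\dom p_{\xi+1})$ guarantees $\dom p_\kappa = \bigcup_{\xi<\kappa}\dom p_\xi \subseteq \delta$, and in particular $\delta \notin \dom p_\kappa$. I then set $q = p_\kappa \cup \{(\delta, h)\}$ and verify that $q \in \QQQ_0$. The only nontrivial clause to check is that $p_\kappa(\eta) \nsubseteq h$ for all $\eta \in \dom p_\kappa$ (necessarily with $\eta < \delta$). If instead $p_\kappa(\eta) \subseteq h$, then $p_\kappa(\eta) = h\restriction\eta$, so $p_\kappa \Vdash (\bigcup\dot{G}_0)(\eta) = p_\kappa(\eta) = \dot{b}\restriction\eta$; since $\eta \in \dom p_\kappa \subseteq S^{\kappa^+}_\kappa$, this contradicts $p_\kappa \Vdash \dot{b}\restriction\eta \in \dot{\calT}(\dot{G}_0)$.

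Having confirmed $q \in \QQQ_0$, I note that $q$ forces both $\dot{b}\restriction\delta = h$ and $(\bigcup\dot{G}_0)(\delta) = h$, yielding the promised contradiction. The main obstacle in this plan is the bookkeeping that guarantees $\delta \notin \dom p_\kappa$ and $\dom p_\kappa \subseteq \delta$, so that extending $p_\kappa$ by the pair $(\delta, h)$ produces a genuine condition; the automatic verification of the coherence clause via the branch hypothesis is what makes the argument go through cleanly and mirrors the strategy of Hyttinen--Rautila in \cite{MR1877015} without relying on $\GCH$.
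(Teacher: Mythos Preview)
Your proof is correct and follows essentially the same approach as the paper: the paper compresses your recursive construction into the phrase ``easy closure arguments allow us to find $\lambda\in S^{\kappa^+}_\kappa$, a function $\map{h}{\lambda}{\lambda}$ and a condition $g$ below $f$ in $\QQQ_0$ such that $\lambda=\sup(\dom{g})$ and $g$ forces $h$ to be the restriction of $\dot{b}$ to $\lambda$,'' and then verifies that $g\cup\{(\lambda,h)\}$ is a condition by exactly the argument you give. Your write-up simply makes the closure argument and the bookkeeping explicit; one minor point is that at limit stages you should specify that $p_\xi$ is taken to be the union $\bigcup_{\eta<\xi}p_\eta$ (the greatest lower bound), since an arbitrary lower bound could have strictly larger domain and would spoil the inclusion $\dom{p_\kappa}\subseteq\delta$.
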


\begin{proof}
 Work in $\VV$ and assume, towards a contradiction, that a condition $f$ in $\QQQ_0$ forces a $\QQQ_0$-name $\dot{b}$ to be a cofinal branch through $\dot{\calT}(\dot{G}_0)$. Using Proposition \ref{proposition:Q_0Closure},  easy closure arguments allow us to find $\lambda\in S^{\kappa^+}_\kappa$, a function $\map{h}{\lambda}{\lambda}$ and a condition $g$ below $f$ in $\QQQ_0$ such that $\lambda=\sup(\dom{g})$ and $g$ forces $h$ to be the restriction of $\dot{b}$ to $\lambda$. By the definition of $\calT(G_0)$, this implies that $h\restriction\delta\neq g(\delta)$ holds for all $\delta\in\dom{g}$ and we can conclude that $g\cup\{(\lambda,h)\}$ is a condition in $\QQQ_0$ below $g$. But this condition forces that $h$ is not contained in $\dot{\calT}(\dot{G}_0)$, a contradiction. 
\end{proof}

The following poset, again taken from \cite{MR1877015}, adds an order preserving function from $T(S)$ to $\mathcal{T}(G_0)$.  The role of clause \ref{item_OrderPresFcn} is to add such a function with initial segments.  However, the role of clauses  \ref{item_ReqOnX} through \ref{item_ImagesUpperBoundT_G_0} is not obvious; roughly, with the exception of clause \ref{item_Req_S_node}, these properties allow us to verify $\kappa^+$-Jensen completeness by ensuring the existence of lower bounds for any generic filter over any $\kappa$-sized elementary submodel.  The role of clause \ref{item_Req_S_node} is to ensure that no cofinal branch is added to $\mathcal{T}(G_0)$.

\begin{definition}[\cite{MR1877015}]\label{def_PosetAddFunction}
 Let $G_0$ be $\QQQ_0$-generic over $\VV$ and work in an outer model of $\VV[G_0]$\footnote{I.e. a model of $\ZFC$ in which $\VV[G]$ is a transitive class.} with the same bounded subsets of $\kappa^+$ as $\VV$. 
 Let $S$ be a subset of $S^{\kappa^+}_\kappa$. 
 
 \begin{enumerate}
  \item An element $t$ of $\calT(G_0)$ is an $\boldsymbol{S}$\textbf{-node} if $t[\delta]\nsubseteq\delta$ holds for all $\delta \in S^{\kappa^+}_\kappa \setminus S$.  
  
  \item Given a partial function $\pmap{h}{T(S)}{ \calT(G_0)}{part}$, we define $$o(h) ~ = ~ \sup\Set{\dom{t}}{t\in\ran{h}}.$$
  
  \item We let $\PPP(S,G_0)$ denote the unique poset defined by the following clauses: 
  \begin{enumerate}
   \item\label{item_ConditionPSG0} A condition in $\PPP(S,G_0)$ is a pair $(h,X)$ satisfying the following statements: 
   \begin{enumerate}
     \item\label{item_OrderPresFcn} $h$ is an order-preserving partial function of cardinality at most $\kappa$ from the tree $T(S)$ to the tree $\calT(G_0)$ with the property that $\dom{h}$ is closed under initial segments. 

 \item\label{item_ReqOnX} $X$ is a partial function from $\kappa^+$ to $\bigcup\Set{{}^{\beta+1}\kappa^+}{\beta < \kappa^+}$ of cardinality at most $\kappa$ such that $$ o(h) \cap S^{\kappa^+}_\kappa \subseteq \dom{X}$$  and $$X(\alpha) \subseteq ( {\textstyle \bigcup G_0})(\alpha)$$ for all $\alpha \in \dom{X}\cap S^{\kappa^+}_\kappa$. 
 
 \item\label{item_ReqLevelPres} $\dom{h(t)}= \sup(\ran{t})$ for all $t \in \dom{h}$. 

 \item\label{item_Req_S_node} $h(t)$ is an $S$-node for all $t \in \dom{h}$. 

 \item\label{item_range_g_no_X} $X(\alpha) \nsubseteq h(t)$ for all $t\in\dom{h}$ and $\alpha \in\dom{X}$.   

 \item\label{item_ImagesUpperBoundT_G_0} If $\seq{t_\zeta}{\zeta < \kappa}$ is a strictly increasing sequence of elements of $\dom{h}$, then $\bigcup_{\zeta < \kappa} h(t_\zeta) \in \calT(G_0)$.  
  \end{enumerate}

 \item A condition $(h,X)$ is stronger than a condition $(k,Y)$ if and only if $k \subseteq h$ and $Y \subseteq X$ hold. 
  \end{enumerate}

 \item\label{defi_OrderOfCondition}  The \textbf{order} of a condition $p=(h,X)$ in $\PPP(S,G_0)$, denoted by $o(p)$, is defined to be the ordinal $$\max \{ \textstyle{\bigcup \dom{X}}, ~ \textstyle{\bigcup} \Set{\dom{h(t)}}{t\in\dom{h}}\}.$$ 
 \end{enumerate}
\end{definition}

\begin{remark}\label{remark_ReqOnX}
 In the above definition, the requirements on $X(\alpha)$ differ depending on whether or not $\cof{\alpha}=\kappa$. 
 If $\alpha\in S^{\kappa^+}_\kappa \cap \dom{X}$, then $X(\alpha)$ is a proper initial segment of the function $ (\bigcup G_0)(\alpha)$.\footnote{The domain of $X(\alpha)$ is required to be a successor ordinal, so $X(\alpha)$ cannot be the entire function $ (\bigcup G_0)(\alpha)$.} 
  In combination with requirement \eqref{item_range_g_no_X} in the above definition, this shows that for all $\alpha\in\dom{X}\cap S^{\kappa^+}_\kappa$, there is an ordinal $\eta_\alpha<\alpha$ such that no node in the range of $h$ can extend $(\bigcup G_0)(\alpha)\restriction\eta_\alpha$. 
On the other hand, if $\alpha \in\dom{X}$ with $\cof{\alpha}<\kappa$, then the only requirement on $X(\alpha)$ is that nothing in the range of $h$ is allowed to extend $X(\alpha)$. 
\end{remark}

As pointed out near the bottom of page 1684 of  \cite{MR1877015}, the poset $\PPP(S,G_0)$ is ${<}\kappa$-closed. 
Requirements \eqref{item_ReqLevelPres} and \eqref{item_ImagesUpperBoundT_G_0} of Definition \ref{def_PosetAddFunction} are mainly needed for the proof of  Lemma \ref{lem_DensityLemma} below.

\begin{lemma}\label{lem_DensityLemma}
 Let $G_0$ be $\QQQ_0$-generic over $\VV$, let $\VV_1$ be an outer model of $\VV[G_0]$ with the same bounded subsets of $\kappa^+$ as $\VV$, and let  $K$ be $\PPP(S,G_0)^{\VV_1}$-generic over $\VV_1$ for some set $S$ that is bistationary in $S^{\kappa^+}_\kappa$ in $\VV_1$. In $\VV_1[K]$, define $$h_K ~ = ~ \bigcup\Set{h}{(h,X) \in K}$$ and $$X_K ~ = ~ \bigcup\Set{X}{(g,X)\in K}.$$ 
Let $\delta=(\kappa^+)^\VV$.  Then the following statements hold: 
\begin{enumerate}
 \item\label{item_Total} $h_K$ is a total, order-preserving function from $(T(S))^{\VV_1}$ to $(\calT(G_0))^{\VV[G_0]}$ whose range consists entirely of $S$-nodes. 

 \item\label{item_X_K_total} $X_K$ is a total function from $\delta$ to $({}^{{<}\delta}\delta)^\VV$. 

 \item\label{item_NothingRangehK} No element of $\ran{h_K}$ extends an element of $\ran{X_K}$. 

 \item\label{item_DensityIncSeq} Suppose $M$ is an outer model of $\VV_1[K]$ and $\vec{y}=\seq{y_\zeta}{\zeta<\lambda}$ is an increasing sequence of nodes in $\ran{h_K}$ in $M$.  If we set $f_{\vec{y}}=\bigcup_{\zeta < \lambda} y_\zeta$, then $$f_{\vec{y}} \restriction \gamma ~ \neq ~ (\textstyle{\bigcup G_0})(\gamma)$$ for all $\gamma \in (S^\delta_\kappa)^\VV$. 
\end{enumerate}
\end{lemma}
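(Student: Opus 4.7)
The plan is to handle the four clauses separately. Clauses (1) and (2) are density arguments in $\PPP(S, G_0)$; clause (3) follows from requirement \eqref{item_range_g_no_X} and the filter property of $K$; and clause (4) is a consequence of (2) and (3). For clause (1), all assertions except the totality of $h_K$ on $T(S)$ are immediate from the definition of conditions and the extension order (order-preservation, $\ran{h_K} \subseteq \calT(G_0)^{\VV[G_0]}$, and the $S$-node property are all imposed on each individual $(h, X) \in \PPP(S, G_0)$). The totality reduces to showing that for each $t \in T(S)^{\VV_1}$, the set of conditions with $t \in \dom{h}$ is dense: given $(h_0, X_0)$, one extends $h_0$ by assigning $t$ and any missing initial segments of $t$ in $T(S)$ to carefully chosen $S$-nodes in $\calT(G_0)$ that extend the existing images and do not extend any element of $\ran{X_0}$. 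For clause (2), density of $\alpha \in \dom{X}$ for $\alpha \in S^\delta_\kappa$ follows from density of $o(h) > \alpha$ combined with requirement \eqref{item_ReqOnX} (which forces such $\alpha$ into $\dom{X}$ once $o(h)$ surpasses $\alpha$), and for $\alpha \notin S^\delta_\kappa$ it follows from a straightforward extension argument in which $X(\alpha)$ is chosen with domain so large that no $h(t)$ can contain it as an initial segment.

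Clause (3) is immediate: for any $t \in \dom{h_K}$ and $\alpha \in \dom{X_K}$, pick witnessing conditions $(h^1, X^1), (h^2, X^2) \in K$ and a common lower bound $(h, X) \in K$; requirement \eqref{item_range_g_no_X} applied to $(h, X)$ gives $X_K(\alpha) = X(\alpha) \nsubseteq h(t) = h_K(t)$. For clause (4), suppose for contradiction that $f_{\vec y}\restriction\gamma = ({\textstyle\bigcup G_0})(\gamma)$ for some $\gamma \in (S^\delta_\kappa)^\VV$. Since $({\textstyle\bigcup G_0})(\gamma)$ has domain $\gamma$, we have $\dom{f_{\vec y}} \geq \gamma$. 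By clause (2), $\gamma \in \dom{X_K}$, and by Remark \ref{remark_ReqOnX}, $X_K(\gamma) = ({\textstyle\bigcup G_0})(\gamma)\restriction \eta_\gamma$ for some successor ordinal $\eta_\gamma < \gamma$. Since $\sup_{\zeta < \lambda} \dom{y_\zeta} = \dom{f_{\vec y}} \geq \gamma > \eta_\gamma$, pick $\zeta < \lambda$ with $\dom{y_\zeta} \geq \eta_\gamma$. Then
$$
y_\zeta\restriction\eta_\gamma ~ = ~ f_{\vec y}\restriction\eta_\gamma ~ = ~ ({\textstyle\bigcup G_0})(\gamma)\restriction\eta_\gamma ~ = ~ X_K(\gamma),
$$
so $X_K(\gamma) \subseteq y_\zeta$, contradicting clause (3).

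The main obstacle is the density step for clause (1), where one must build $S$-nodes in $\calT(G_0)$ that simultaneously (i) avoid agreeing with $({\textstyle\bigcup G_0})(\delta)$ on any $\delta \in S^{\kappa^+}_\kappa$, (ii) witness the $S$-node property above each $\delta \in S^{\kappa^+}_\kappa \setminus S$ in the node's domain, (iii) cohere with the existing order-preserving structure imposed by $h_0$, and (iv) avoid extending any element of $\ran{X_0}$. The bistationarity of $S$ provides the room needed to satisfy (ii) while maintaining (i), and the construction closely parallels the arguments on pages 1684--1692 of \cite{MR1877015}, modified to avoid their $\GCH$ hypothesis by leveraging the preliminary results on $I[\kappa^+]$ established in Section \ref{section:Prelim}.
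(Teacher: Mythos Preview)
Your proposal is correct and follows essentially the same approach as the paper. The paper handles (1) by citing {\cite[Claim 3.11]{MR1877015}}, calls (2) an easy density argument, says (3) follows directly from requirement \eqref{item_range_g_no_X}, and gives for (4) exactly the contradiction you describe via $X_K(\gamma) = (\bigcup G_0)(\gamma)\restriction\rho_\gamma$ and picking $\zeta_*$ with $\rho_\gamma \in \dom{y_{\zeta_*}}$; your treatment of (3) and (4) matches this almost verbatim, and your sketches for (1) and (2) simply unpack what the paper leaves to the citation and to the reader. One minor remark: the density argument for (1) from \cite{MR1877015} does not itself require the $I[\kappa^+]$ machinery you allude to in your final paragraph---that apparatus is used later in the iteration analysis, not here---and the key subtlety in the totality of $h_K$ is rather requirement \eqref{item_ReqLevelPres}, as the paper notes in a footnote.
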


\begin{proof}
 Statement \eqref{item_Total} is {\cite[Claim 3.11]{MR1877015}}.\footnote{This was the only place in the argument where requirement \eqref{item_ReqLevelPres} from Definition \ref{def_PosetAddFunction}  played a role. This requirement was used to fix the error from \cite{MR1222536}. It ensures that the function $h_K$ is total.} 
Statement \eqref{item_X_K_total} is an easy density argument, and statement  \eqref{item_NothingRangehK} follows directly from requirement \ref{item_range_g_no_X} of Definition \ref{def_PosetAddFunction}.
For Statement \eqref{item_DensityIncSeq}, let $M$ and $\vec{y} \in M$ be as stated, and suppose for a contradiction there exists $\gamma<\delta$ with $\cof{\gamma}^\VV=\kappa$ and $f_{\vec{y}} \restriction \gamma = (\textstyle{\bigcup G_0})(\gamma)$.   

Work in $M$.  The statements \eqref{item_Total},  \eqref{item_X_K_total}, and  \eqref{item_NothingRangehK} are obviously upward absolute from $\VV_1[K]$ to $M$.
 By Statement \eqref{item_X_K_total}, we know that $\gamma$ is in the domain of $X_K$, and, 
 by applying Remark \ref{remark_ReqOnX} to some  condition in $K$ whose second coordinate has $\gamma$ in its domain, we can find $\rho_\gamma < \gamma$ with $X_K(\gamma) =  (\textstyle{\bigcup G_0})(\gamma) \restriction \rho_\gamma$. 
Note that, by the definition of $\QQQ_0$, we know that $(\bigcup G_0)(\gamma)$ is a total function on  $\gamma$, and therefore our assumption implies that $\gamma \leq \dom{f_{\vec{y}}}$. 
 Then $\rho_\gamma \in \dom{f_{\vec{y}}}$, and  there is some $\zeta_*<\lambda$ with $\rho_\gamma\in\dom{y_{\zeta_*}}$. 
  In particular, we know that  $$y_{\zeta_*} \restriction \rho_\gamma ~ = ~ f_{\vec{y}} \restriction \rho_\gamma ~ = ~  (\textstyle{\bigcup G_0})(\gamma)\restriction\rho_\gamma ~ = ~ X_K(\gamma).$$
 But this implies that $y_{\zeta_*}\in\ran{h_K}$ extends $X_K(\gamma)\in\ran{X_K}$, contradicting Statement \eqref{item_NothingRangehK}. 
\end{proof}

We now describe the iteration that will witness the poset from Theorem \ref{thm_PropertiesOfHytRautPoset}. This is a slight variant of the iteration described at the bottom of page 1684 of \cite{MR1877015}.  
The main differences are: 
 \begin{itemize}
  \item The length of our iteration is at least $2^{2^\kappa}$.   This is to allow for the case when, in the ground model, the cardinal $2^{\kappa^+}$ is very large. 
  
 \item More significantly, at a given stage $\alpha$ of our iteration, when considering the set $\dot{S}_\alpha$ given to us by the bookkeeping device, we only force with the poset $\PPP(\dot{S}_\alpha,G_0)$ if the statement
 \begin{equation}\label{eq_ComplementApproachable}
  \anf{\textit{$S^{\kappa^+}_\kappa \setminus \dot{S}_\alpha$ contains a stationary set in $I[\kappa^+]$}}
\end{equation}
holds in the corresponding generic extension of the ground model. 
This will ensure (via an application of Lemma \ref{lem_ProperForIApreservesApproach}) that the complements of the $\dot{S}_\alpha$'s remain stationary throughout the iteration, which in turn will be the key to showing that the tree $\calT(G_0)$ has no cofinal branch in the final model. 
\end{itemize}

\begin{remark}
In the $\GCH$ setting of \cite{MR1877015}, requiring \eqref{eq_ComplementApproachable} to hold is no restriction at all, since in that scenario, this statement holds for \textbf{every} set bistationary in $S^{\kappa^+}_\kappa$.  
 But, if $\kappa^{{<}\kappa}  > \kappa$ holds, then the requirement \eqref{eq_ComplementApproachable} seems to be needed in order to prove that the iteration adds no cofinal branch to the tree $\calT(G_0)$. 
\end{remark}

In the following, we fix a cardinal $\varepsilon$ satisfying $\varepsilon^{2^\kappa}=\varepsilon$. 
 Let $\calC$ denote the set of all partial functions from $\varepsilon$ to $\HH{\kappa^+}$ of cardinality at most $\kappa$. 
  Then our assumptions on $\varepsilon$ imply that $\betrag{\calC}=\varepsilon$. 
  Next, let $\calN$ denote the set of all partial functions from ${S^{\kappa^+}_\kappa}\times 2^\kappa$ to $\calC$. Again, our assumptions imply that $\betrag{\calN}=\varepsilon$ and we can pick an $\varepsilon$-to-one surjection $\map{b}{\varepsilon}{\calN}$. 
%

\begin{definition}\label{definition:Iteration}
 We define $$\seq{\PPP_\alpha, \dot{\QQQ}_\xi}{\alpha \leq \varepsilon, ~ \xi < \varepsilon}$$ to be a  ${<}\kappa^+$-support iteration satisfying the following clauses: 
\begin{enumerate}
 \item $\dot{\QQQ}_0$ is chosen in a canonical way that ensures that the map $$\Map{i}{\QQQ_0}{\PPP_1}{q}{\langle\check{q}\rangle}$$ is an isomorphism. 

 \item\label{item-Bookkeeping} Assume that $\alpha \in [1, \varepsilon)$ has the property that the poset $\PPP_\alpha$ is ${<}\kappa^+$-distributive and there exists a sequence $\seq{q_{\gamma,\xi}}{(\gamma,\xi)\in\dom{b(\alpha)}}$ of conditions in $\PPP_\alpha$ such that the following statements hold: 
   \begin{itemize}
    \item If $(\gamma,\xi)\in\dom{b(\alpha)}$, then $\supp{q_{\gamma,\xi}}=\dom{b(\alpha)(\gamma,\xi)}$. 
    
    \item If $(\gamma,\xi)\in\dom{b(\alpha)}$, $\ell\in\supp{q_{\gamma,\xi}}$ and $b(\alpha)(\gamma,\xi)(\ell)=x$, then $q_{\gamma,\xi}(\ell)=\check{x}$. 
   \end{itemize}
   
 If we define $$\dot{B}_\alpha ~ = ~ \Set{(\check{\gamma},q_{\gamma,\xi})}{(\gamma,\xi)\in\dom{b(\alpha)}},$$ then there exists a $\PPP_\alpha$-name $\dot{S}_\alpha$ for a subset of $S^{\kappa^+}_\kappa$ such that the following statements hold in $\VV[G]$ whenever $G$ is $\PPP_\alpha$-generic over $\VV$ and $G_0$ is the induced $\QQQ_0$-generic filter over $\VV$: 
 \begin{enumerate}
   \item $\dot{\QQQ}_\alpha^G=\PPP(\dot{S}_\alpha^G,G_0)^{\VV[G]}$. 
 
   \item If the set $S^{\kappa^+}_\kappa \setminus \dot{B}_\alpha^G$ contains a stationary set in $I[\kappa^+]$, then $\dot{S}_\alpha^G=\dot{B}_\alpha^G$. 
   
   \item\label{item-def-iteriation-trivial-case-1} If the set $S^{\kappa^+}_\kappa \setminus \dot{B}_\alpha^G$ does not contain a stationary set in $I[\kappa^+]$, then $\dot{S}_\alpha^G=\emptyset$.  
 \end{enumerate}
 
  \item\label{item-def-iteriation-trivial-case-2} Assume that $\alpha \in [1, \varepsilon)$ has the property that the poset $\PPP_\alpha$ is ${<}\kappa^+$-distributive and there exists no sequence of conditions in $\PPP_\alpha$ with the properties listed in \eqref{item-Bookkeeping}. Then $\dot{S}_\alpha=\emptyset$ and $\dot{\QQQ}_\alpha^G=\PPP(\dot{S}_\alpha^G,G_0)^{\VV[G]}$ whenever $G$ is $\PPP_\alpha$-generic over $\VV$ and $G_0$ is the induced $\QQQ_0$-generic filter over $\VV$.  
 
  \item If $\alpha \in [1, \varepsilon)$ has the property that the poset $\PPP_\alpha$ is not ${<}\kappa^+$-distributive, then $\dot{\QQQ}_\alpha$ is a $\PPP_\alpha$-name for a trivial poset.  
\end{enumerate} 
 \end{definition}

\begin{remark}
 We include the cases \eqref{item-def-iteriation-trivial-case-1} and \eqref{item-def-iteriation-trivial-case-2} in the above definition of the name $\dot{S}_\alpha$ to simplify notation later on.  Note that, since we have $T(\emptyset) = \emptyset$, conditions in $\PPP(\emptyset, G_0)$ always have trivial first coordinate, and the poset $\PPP(\emptyset, G_0)$ is forcing equivalent to $\Add{\kappa^+}{1}$. 
\end{remark}

Throughout the rest of this paper, $\PPP$ refers to the poset  $\PPP_\varepsilon$. Moreover, in order to conform to the notation from \cite{MR1877015}, if $G$ is $\PPP_\alpha$-generic over $\VV$ for some $\alpha\leq\varepsilon$, then we let $G_0$ denote the induced $\QQQ_0$-generic filter over $\VV$.

\begin{definition}\label{def_Flat}
 A condition $p$ in $\PPP$ is called \textbf{flat} if there exists a sequence $\seq{x_\alpha}{i\in\supp{p}}$ with the property that $x_\alpha\in\HH{\kappa^+}$ and $$p\restriction\alpha \Vdash_{\PPP_\alpha} \anf{p(\alpha)=\check{x}_\alpha}$$ hold for all $\alpha\in\supp{p}$ with the property that $\PPP_\alpha$ is ${<}\kappa^+$-distributive.  
\end{definition}

Just as in \cite{MR1877015}, the flat conditions turn out to be dense in $\PPP$, as we will see in Lemma \ref{lem_P_Jensen_complete} below. 
 Although the density of the flat conditions is not needed to prove the $\kappa^+$-Jensen-completeness of $\PPP$ in Lemma \ref{lem_P_Jensen_complete}, it will be crucial for the proofs of the following statements: 
\begin{itemize}
 \item The \anf{tails} of the above iteration are proper with respect to $\text{IA}_\kappa$ (see Lemma \ref{lem_TailsProperIA}), which in turn is important for the proof that the tree $\calT(G_0)$ has no cofinal branches in $\PPP$-generic extensions of $\VV$ (see Lemma \ref{lem_NoCofBranch}). 

 \item If $2^\kappa = \kappa^+$, then $\PPP$ satisfies the $\kappa^{++}$-chain condition. 
\end{itemize}

The function $p(f_0,g)$ defined in Definition \ref{def_ComponentNotation_NoTails} below is a natural attempt to form a flat condition out of a $(W,\PPP)$-generic filter for some elementary substructure $W$ of size $\kappa$.

\begin{definition}\label{def_ComponentNotation_NoTails}
 Suppose  $W \prec (\HH{\theta},\in,\PPP)$ with $\betrag{W}=\kappa \subseteq W$, and $g \subseteq\PPP\cap W$ is a $(W,\PPP)$-generic filter in $\VV$. 
 \begin{enumerate}
   \item Set $g_0=\Set{q\in\QQQ_0}{\exists p\in g ~ p(0)=\check{q}}$.\footnote{This notation is chosen to keep in line with the notational convention from \cite{MR1877015} of identifying $\PPP_1$ with $\QQQ_0$ and  referring to the induced $\QQQ_0$-generic filter by $G_0$.  Notice that $\bigcup g_0$ is easily a condition in $\QQQ_0$.} 
    
  \item Given $0<\alpha \leq  \varepsilon$,  we define
$$g_\alpha ~ = ~ \Set{ p \restriction \alpha}{p \in g}.$$ 

 \item Given $0<\alpha <  \varepsilon$ with the property that the poset $\PPP_\alpha$ is ${<}\kappa^+$-distributive, let $\dot{c}_{g,\alpha}$, $\dot{h}_{g,\alpha}$ and $\dot{X}_{g,\alpha}$ denote the canonical $\PPP_\alpha$-names with the property that,\footnote{The idea behind this definition is that $\dot{c}_{g,\alpha}$ names the evaluation of the \emph{$\alpha$-th component} of $g$ after forcing with $\PPP_\alpha$ over $\VV$, and $\dot{h}_{g,\alpha}$ and $\dot{X}_{g,\alpha}$ name the unions of the left and right components (respectively) of that $\alpha$-th component of $g$.} 
 whenever $G$ is $\PPP_\alpha$-generic over $\VV$, then 
  \begin{itemize}
   \item $\dot{c}_{g,\alpha}^G ~ = ~ \Set{p(\alpha)^G}{p \in g}$, 

   \item $\dot{h}_{g,\alpha}^G ~ = ~  \bigcup\Set{h}{\exists X ~ (h,X)\in\dot{c}_{g,\alpha}^G}$, and  

   \item $\dot{X}_{g,\alpha}^G ~ = ~ \bigcup\Set{X}{\exists h ~ (h,X)\in\dot{c}_{g,\alpha}^G}$. 
  \end{itemize}

 \item If $f_0$ is a condition in $\QQQ_0$ extending $\bigcup g_0$, then we define a function $p(f_0, g)$ with domain $W \cap \varepsilon$ by setting  $$p(f_0, g) ~ = ~  \langle f_0 \rangle^\frown \seq{\mathsf{pair}_{\PPP_\alpha}(\dot{h}_{g,\alpha},\dot{X}_{g,\alpha})}{\textit{$\alpha \in W \cap [1,\varepsilon)$ with $\PPP_\alpha$ ${<}\kappa^+$-distributive}},$$ where $\mathsf{pair}_{\PPP_\alpha}(\dot{h}_{g,\alpha},\dot{X}_{g,\alpha})$ denotes the canonical $\PPP_\alpha$-name for the ordered pair of $\dot{h}_{g,\alpha}$ and $\dot{X}_{g,\alpha}$. 
\end{enumerate} 
\end{definition}

Note that, in the last part of the above definition, the function $p(f_0, g)$ may or may not be a condition in $\PPP$. The following lemma shows how we can ensure that  $p(f_0, g)$ is a flat condition below every condition in $g$.

\begin{lemma}\label{lem_KeyBackgroundLemma_NoTails}
Suppose $W$, $g$, and $f_0$ are as in Definition \ref{def_ComponentNotation_NoTails}.  Then one of the following statements holds:  
\begin{enumerate}
 \item\label{item_GoodCase} $p(f_0, g)$ is a flat condition in $\PPP$ that extends every element of $g$.

 \item\label{item_BadCase} There is an $\alpha\in W \cap [1,\varepsilon)$ such that the following statements hold: 
 \begin{enumerate}
   \item\label{item_Condition_extending_g}   $p(f_0, g)\restriction\alpha$ is a condition in $\PPP_\alpha$ that extends every element of $g_\alpha$.

   \item\label{item_NoNewKappaSeq} If $\tau \in W$ is a $\PPP_\alpha$-name for a function from $\kappa$ to the ordinals, then  $$p(f_0,g)\restriction\alpha\Vdash_{\PPP_\alpha}\anf{\tau\in\check{W}}.$$ 

   \item\label{item_WhatStrongerConditionForces} There is a condition $q$ in $\PPP_\alpha$ below $p(f_0,g)\restriction\alpha$ with the property that the following statements hold true in $\VV[G]$, whenever $G$ is $\PPP_\alpha$-generic over $\VV$ with $q\in G$:   
   \begin{enumerate}
  \item\label{item_CofIsKappa} $\cof{W\cap\kappa^+}^\VV=\kappa$. In particular, we have $W\cap\kappa^+\in\dom{\bigcup G_0}$. 
  
  \item\label{item_EveryProperIS} Every proper initial segment of $(\bigcup G_0)(W\cap\kappa^+)$ is an element of $W$, and is an $\dot{S}_\alpha^G$-node. 
  
  \item\label{item_NoProperInitial} No proper initial segment of $(\bigcup G_0)(W\cap\kappa^+)$ is an element of $\ran{\dot{X}_{g,\alpha}^G}$. 
   \end{enumerate}
 \end{enumerate}
 \end{enumerate} 
\end{lemma}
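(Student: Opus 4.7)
My plan is to verify, by induction on $\beta \leq \varepsilon$, that $p(f_0,g) \restriction \beta$ is a condition in $\PPP_\beta$ extending every element of $g_\beta$. If this induction carries through to $\beta = \varepsilon$, case (1) holds; moreover, the resulting condition is automatically flat, because at each ${<}\kappa^+$-distributive successor stage $\alpha$ the inductive hypothesis makes $p(f_0,g)\restriction\alpha$ a $(W,\PPP_\alpha)$-total master condition for $g_\alpha$, so Lemma \ref{lem_TotalMasterDist} forces the names $\dot{h}_{g,\alpha}$ and $\dot{X}_{g,\alpha}$ (having domains of size at most $\kappa \subseteq W$) to take specific values in $W$.

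Several cases of the induction are routine. The base case $\beta = 1$ follows from $f_0 \geq_{\QQQ_0} \bigcup g_0$. Limits are handled by observing that the support of $p(f_0,g)\restriction\beta$ lies in $W$ and has size at most $\kappa$, hence is legal in the ${<}\kappa^+$-support iteration, combined with a standard coherence argument showing $p(f_0,g)\restriction\beta \leq q$ for every $q \in g_\beta$. Successor stages $\alpha+1$ with $\PPP_\alpha$ not ${<}\kappa^+$-distributive are trivial since $\dot{\QQQ}_\alpha$ is a name for a trivial poset. Consequently, if the induction fails at a least $\beta$, then $\beta = \alpha+1$ with $\alpha \in W \cap [1,\varepsilon)$ and $\PPP_\alpha$ being ${<}\kappa^+$-distributive; the inductive hypothesis yields clause (2)(i), and Lemma \ref{lem_TotalMasterDist} combined with the master-condition identity $W[G]\cap\VV = W$ yields clause (2)(ii).

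For clause (2)(iii), the failure at stage $\alpha$ provides a condition $q'' \leq_{\PPP_\alpha} p(f_0,g)\restriction\alpha$ forcing that $(\dot{h}_{g,\alpha},\dot{X}_{g,\alpha})$ is not a condition in $\PPP(\dot{S}_\alpha,G_0)$. Running through Definition \ref{def_PosetAddFunction} clause by clause, the requirements \eqref{item_OrderPresFcn}, \eqref{item_ReqOnX}, \eqref{item_ReqLevelPres}, \eqref{item_Req_S_node} and \eqref{item_range_g_no_X} all hold for the union $(\dot{h}_{g,\alpha}^G, \dot{X}_{g,\alpha}^G)$ because they either hold pointwise from individual conditions in $g_\alpha$ or follow via a common-lower-bound argument. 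Hence the failure must be in clause \eqref{item_ImagesUpperBoundT_G_0}, and after extending $q''$ we obtain a strictly increasing $\kappa$-sequence $\vec{t}=\seq{t_\zeta}{\zeta<\kappa}$ in $\dom{\dot{h}_{g,\alpha}^G}$ whose union $\bigcup_\zeta y_\zeta$, with $y_\zeta = \dot{h}_{g,\alpha}^G(t_\zeta)$, equals $(\bigcup G_0)(\gamma)$ for some $\gamma \in S^{\kappa^+}_\kappa$. Each $t_\zeta$ lives in a condition of $g \subseteq W$, so $\sup(\ran{t_\zeta}) \in W \cap \kappa^+$ and $\gamma \leq \sup(W\cap\kappa^+)$. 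For the reverse inequality, I would adapt the argument of Lemma \ref{lem_DensityLemma}\eqref{item_DensityIncSeq} to the internal setting: if some bad $\gamma^* < \sup(W\cap\kappa^+)$ lay in $W$, density in $W$ would force $\gamma^* \in \dom{\dot{X}_{g,\alpha}^G}$ with $\dot{X}_{g,\alpha}^G(\gamma^*)$ a proper initial segment of $(\bigcup G_0)(\gamma^*)$, and clause \eqref{item_range_g_no_X} applied to a common lower bound in $g_\alpha$ would contradict the bad union agreeing with $(\bigcup G_0)(\gamma^*)$. Hence $\gamma = \sup(W\cap\kappa^+)$, which has $\VV$-cofinality $\kappa$ and lies in $\dom{\bigcup G_0}$, giving (2)(iii)(a). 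Clause (2)(iii)(b) follows because the master-condition property forces each $y_\zeta$ into $W[G]\cap\VV = W$, so each proper initial segment of $\bigcup_\zeta y_\zeta$ is an initial segment of some $y_\zeta \in W$, and the $S$-node property is inherited by initial segments by direct inspection of the definition. Clause (2)(iii)(c) is immediate: any proper initial segment of $\bigcup_\zeta y_\zeta$ equals $y_\zeta \restriction \eta$ for sufficiently large $\zeta$, while each $\dot{X}_{g,\alpha}^G(\beta)$ is incomparable with every $y_\zeta$ via a common-lower-bound argument in $g_\alpha$ combined with \eqref{item_range_g_no_X}.

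The main obstacle will be the identification $\gamma = \sup(W\cap\kappa^+)$, which requires carefully exporting the density argument of Lemma \ref{lem_DensityLemma}\eqref{item_DensityIncSeq} from the fully-generic setting to the restricted density available from $(W,\PPP_\alpha)$-genericity, and keeping track of how $\dom{\dot{X}_{g,\alpha}^G}$ inherits in the limit the inclusion $o(h)\cap S^{\kappa^+}_\kappa \subseteq \dom{X}$ from clause \eqref{item_ReqOnX}.
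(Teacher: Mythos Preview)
Your outline is correct and tracks the paper's proof closely in its inductive skeleton: the same case split on the least failing $\beta$, the same reduction to a successor $\alpha+1$ with $\alpha\in W$, and the same identification of clause~\eqref{item_ImagesUpperBoundT_G_0} as the only possible point of failure for the pair $(\dot h_{g,\alpha}^G,\dot X_{g,\alpha}^G)$.

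The one substantive difference is in how the key identification $\gamma = W\cap\kappa^+$ is obtained. You plan to stay inside $W$ and re-run the density argument of Lemma~\ref{lem_DensityLemma}\eqref{item_DensityIncSeq} using only $(W,\PPP)$-genericity of $g$; you correctly flag this as the main obstacle. The paper sidesteps this obstacle by passing to the transitive collapse $H_W$ of $W$ with inverse collapse $\sigma$. Then $\bar g_\alpha=\sigma^{-1}[g_\alpha]$ is \emph{fully} $\bar\PPP_\alpha$-generic over $H_W$ (not merely $(W,\cdot)$-generic), and the $\alpha$-th component $k$ of $\bar g$ is fully $\sigma^{-1}(\dot\QQQ_\alpha)^{\bar g_\alpha}$-generic over $H_W[\bar g_\alpha]$, so Lemma~\ref{lem_DensityLemma} applies verbatim with $H_W$ as the ground model, $H_W[\bar g_\alpha]$ as $V_1$, $k$ as $K$, and $V[G]$ as the outer model $M$. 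All the conclusions (totality of $X_k$ on $\delta$, the $S$-node property of $\ran{h_k}$, and the exclusion of any bad $\gamma<\delta=(\kappa^+)^{H_W}$) come for free, and are transferred back to $V[G]$ via the lifted elementary embedding $\hat\sigma:H_W[\bar g_\alpha]\to H_\theta[G]$, using that $\crit{\hat\sigma}=\delta$ fixes the relevant small objects pointwise.

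Your direct route works as well, but it forces you to re-prove inside $W$ statements such as ``$\gamma^*\in\dom{\dot X_{g,\alpha}^G}$ for every $\gamma^*\in W\cap S^{\kappa^+}_\kappa$'' from $(W,\PPP)$-genericity, which is more bookkeeping than simply invoking Lemma~\ref{lem_DensityLemma} after collapsing. The collapse trick is exactly the device that converts your ``restricted genericity'' into ``full genericity over a smaller model,'' and is what the paper does.
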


\begin{proof}
 Set $g_\varepsilon=g$ and, given $\beta\leq\varepsilon$, let $\Phi_\beta$ denote the statement asserting that $p(f_0,g)\restriction\beta$ is a flat condition in $\PPP_\beta$ that lies below every element of $g_\beta$.   
 Suppose that $\Phi_\varepsilon$ fails, {i.e.} that part \eqref{item_GoodCase} of the disjunctive conclusion of the statement of the lemma fails.  Let $\beta \leq \varepsilon$ be the least ordinal such that $\Phi_\beta$ fails. 

\begin{claim}\label{clm_PropOfj}
 $\beta$ is a successor ordinal and an element of $W$. 
\end{claim}

\begin{proof}[Proof of Claim \ref{clm_PropOfj}]
  First, we have $\beta>0$, because the 0th component of $p(f_0,g)$ is $f_0$, which is assumed to be a condition stronger than $\bigcup g_0$, and $g_0$ is a $(W,\QQQ_0)$-generic filter. 
 Now, assume, towards a contradiction, that $\beta$ is a limit ordinal.
 Since $\Phi_\alpha$ holds for all $\alpha<\beta$ and since the support of $p(f_0,g)$ is contained in the $\kappa$-sized set $W\cap\varepsilon$, it follows easily that $p(f_0,g)\restriction\beta$ is a condition and is below every element of $g_\beta$. 
 Furthermore, for each $\alpha \in W \cap\beta$, let $$\vec{x}_\alpha ~ = ~ \seq{x^\alpha_\xi}{\xi \in W \cap \alpha}$$ witness flatness of $p(f_0,g) \restriction\alpha$. Then for all $\alpha_0 <\alpha_1 <\beta$, it follows easily that $x^{\alpha_0}_\xi = x^{\alpha_1}_\xi$ holds for all $\xi \in W \cap \alpha_0$.  So the $\vec{x}_\alpha$'s are coherent, and their union witnesses flatness of $p(f_0,g) \restriction\beta$.  This shows that $\Phi_\beta$ holds, a contradiction. 

 The above computations yield an ordinal $\alpha$ with $\beta=\alpha+1$.    Assume, towards a contradiction, that $\alpha \notin W$.  
 Note that, if $r \in g$, then $r \in W$ and, since $\supp{r}$ is a $\kappa$-sized element of $W$ and $\kappa \subseteq W$, it follows that $\supp{r} \subseteq W$. 
  Since $\alpha$ is not an element of $W$, this shows that $g_\alpha=g_\beta$ and $p(f_0,g)\restriction\beta=p\big(f_0,g)\restriction\alpha$. But, since $\Phi_\alpha$ holds, this immediately implies that $\Phi_\beta$ holds too, a contradiction. 
\end{proof}

The above claim shows that there is an $\alpha\in W \cap [1,\varepsilon)$ with $\beta  = \alpha + 1$.   We claim that this $\alpha$ witnesses part \eqref{item_BadCase} of the conclusion of the lemma holds true. 
By the minimality of $\beta$, we know that \eqref{item_Condition_extending_g} holds and $\PPP_\alpha$ is ${<}\kappa^+$-distributive.  Moreover, since $p(f_0,g) \restriction \alpha$ is a condition that extends the $(W,\PPP_\alpha)$-generic filter $g_\alpha$, part \eqref{item_NoNewKappaSeq} holds by Lemma \ref{lem_TotalMasterDist}.

\begin{claim}\label{clm_ClaimRE_WhatPForces}
There is an $x \in\HH{\kappa^+}$ with 
 \begin{equation}\label{eq_Ground_Model_Element}
  p(f_0,g) \restriction \alpha \Vdash_{\PPP_\alpha}\anf{p(f_0,g)(\alpha)=\check{x}}. 
 \end{equation}
  Furthermore, if $G$ is $\PPP_\alpha$-generic over $\VV$ with $p(f_0,g)\restriction\alpha\in G$, then the following statements hold in $\VV[G]$:  
\begin{enumerate}

 \item\label{item_ExceptPossibly} The pair $(\dot{h}_{g,\alpha}^G,\dot{X}_{g,\alpha}^G)$ satisfies all requirements to be a condition in the poset $\PPP(\dot{S}_\alpha^G,G_0)$, \textbf{with the possible exception of} requirement \eqref{item_ImagesUpperBoundT_G_0} of Definition \ref{def_PosetAddFunction}.  In particular, the following statements hold: 
  \begin{enumerate}
    \item Every element of $\ran{\dot{h}_{g,\alpha}^G}$ is an $\dot{S}_\alpha^G$-node (i.e.\ requirement \eqref{item_Req_S_node} of Definition \ref{def_PosetAddFunction} is satisfied). 
    
  \item No element of $\ran{\dot{h}_{g,\alpha}^G}$ extends an element of $\ran{\dot{X}_{g,\alpha}^G}$ (i.e.\ requirement \eqref{item_range_g_no_X} of Definition \ref{def_PosetAddFunction} is satisfied).
  \end{enumerate}

  \item\label{item_IncSeqRangeDoth} \textbf{If} the pair $(\dot{h}_{g,\alpha}^G,\dot{X}_{g,\alpha}^G)$ is \textbf{not} a condition in $\PPP(\dot{S}_\alpha^G,G_0)$, then the following statements hold:     
  \begin{enumerate}
  \item\label{itemclm_CofIsKappa} $\cof{W \cap \kappa^+}^\VV = \kappa$. In particular, we have $W \cap \kappa^+\in\dom{\bigcup G_0}$). 
  
  \item\label{itemclm_EveryProperIS} Every proper initial segment of $(\bigcup G_0)(W \cap \kappa^+)$ is an element of $W$, and is an $\dot{S}_\alpha^G$-node. 
  
  \item\label{itemclm_NoProperInitial} No proper initial segment of $(\bigcup G_0)(W \cap \kappa^+)$ is an element of $\ran{\dot{X}_{g,\alpha}^G}$. 
   \end{enumerate}
\end{enumerate}
\end{claim}

\begin{proof}[Proof of Claim \ref{clm_ClaimRE_WhatPForces}]
  In order to make use of Lemma \ref{lem_DensityLemma}, it will be more convenient to work with the transitive collapse of $W$ instead of $W$ itself.  Let $H_W$ be the transitive collapse of $W$, and let $\map{\sigma}{H_W}{W \prec\HH{\theta}}$ be the inverse of the collapsing map.  
  In the following, if $b$ is a set, then we will write 
  \begin{equation*}
   \bar{b} ~ = ~  \sigma^{{-}1} [b] ~ \subseteq ~ H_W.
  \end{equation*}
  Note that $\bar{b}=\sigma^{{-}1}(b)$ holds for all $b\in W$ and we will frequently use this abbreviation in the following arguments.  
  
Since $g$ is a $(W,\PPP)$-generic filter, we know that $\bar{g}$ is a $\bar{\PPP}$-generic over $H_W$, and, in particular, it follows that $\bar{g}_\gamma$ is $\bar{\PPP}_\gamma$-generic over $H_W$ for  all $\gamma\in W\cap\varepsilon$.  
 If we define 
  $$k ~ = ~ \Set{q(\bar{\alpha})^{\bar{g}_\alpha}}{q\in\bar{g}_{\alpha+1}} ~ \subseteq ~  H_W[\bar{g}_\alpha],$$ then  $k$ is $\sigma^{{-}1}(\dot{\QQQ}_\alpha)^{\bar{g}_\alpha}$-generic over $H_W[\bar{g}_\alpha]$ with $H_W[\bar{g}_{\alpha+1}] = H_W[\bar{g}_\alpha][k]$.

Set $\delta= (\kappa^+)^{H_W}$, and note that $\delta = \crit{\sigma}$, because $\betrag{W}=\kappa \subseteq W$. 
 Since $\Phi_\alpha$ holds, we know that $p(f_0,g) \restriction \alpha$ is a condition in $\PPP_\alpha$ that extends every element of the $(W,\PPP_\alpha)$-generic filter $g_\alpha$.  In particular, $p(f_0,g) \restriction \alpha$ it is a total master condition for $W$. By Lemma \ref{lem_TotalMasterDist}, every $\PPP_\alpha$-name for a function from $\kappa$ to the ordinals  in $W$  is forced by $p(f_0,g) \restriction \alpha$ to be evaluated to an element of $W$.  It follows that
\begin{equation}\label{eq_DistExt}
 H_W   \cap  {}^\kappa\Ord ~ = ~ H_W[\bar{g}_\alpha]  \cap  {}^\kappa\Ord. 
\end{equation}

Let  $h_k$ be the union of the left coordinates  of $k$ and let $X_k$ be the union of the right coordinates of $k$. 
  By \eqref{eq_DistExt}, we can apply Lemma \ref{lem_DensityLemma} with the ground model $H_W$ and the outer model $H_W[\bar{g}_\alpha]$ and derive the following statements:  
\begin{itemize}
 \item The function $$\map{h_k}{T(\sigma^{{-}1}(\dot{S}_\alpha)^{\bar{g}_\alpha})^{H_W[\bar{g}_\alpha]}}{\sigma^{{-}1}(\dot{\calT}(\dot{G}_0))^{\bar{g}_0}}$$ is 
  order preserving and every element of its range is a $\sigma^{{-}1}(\dot{S}_\alpha)^{\bar{g}_\alpha}$-node in $H_W[\bar{g}_\alpha]$. 

 \item No element of $\ran{h_k}$ extends an element of the range of the function $$\map{X_k}{\delta}{({}^{{<}\delta}\delta)^{H_W}}.$$ 
\end{itemize}

Set $x=(h_k,X_k)$. In the following, we will show that $p(f_0,g) \restriction \alpha$ and $x$ satisfy \eqref{eq_Ground_Model_Element}, and $p(f_0,g) \restriction \alpha$ forces the other statements of the claim to hold true.

Let  $G$ be $\PPP_\alpha$-generic over $\VV$ with $p(f_0,g) \restriction \alpha \in G$.  Work in $V[G]$. 
  Since $\alpha \in W$ and $p(f_0,g) \restriction \alpha$ is a $W$-total master condition that, in particular,  extends every element of $g_\alpha$, it follows that $W[G] \cap \VV = W$, $G \cap W = g_\alpha$, and $\sigma$ can be canonically lifted to an elementary embedding $$\map{\hat{\sigma}}{H_W[\bar{g}_\alpha]}{W[G] \prec \HH{\theta}[G]}.$$ satisfying $$\hat{\sigma}(\tau^{\bar{g}_\alpha}) ~ = ~ \sigma(\tau)^G$$ for every $\bar{\PPP}_\alpha$-name $\tau$ in $H_W$. 
   Note that $\ran{\hat{\sigma}} = W[G]$ holds.  

 Now, pick an element $q$ of $\bar{g}_{\alpha+1}\subseteq H_W$. 
 By the definition of $\hat{\sigma}$, we then have $$\hat{\sigma}(q(\bar{\alpha})^{\bar{g}_\alpha}) ~ = ~ \sigma(q(\bar{\alpha}))^G ~ = ~ (\sigma(q)(\sigma(\bar{\alpha})))^G ~ = ~ (\sigma(q)(\alpha))^G.$$ Since $q\in\bar{g}_{\alpha+1}$ implies that $\sigma(q)\in g_{\alpha+1}$, we can now conclude that $\hat{\sigma}(q(\bar{\alpha})^{\bar{g}_\alpha})\in \dot{c}_{g,\alpha}^G$. These computations show that $\hat{\sigma}[k] \subseteq \dot{c}_{g,\alpha}^G$.

 Next, fix a condition $p$ in $g$. Since $g\cup\{\alpha\}\subseteq W=\ran{\sigma}$, we then have $\bar{p}\restriction(\bar{\alpha}+1)\in\bar{g}_{\alpha+1}$ and $\bar{p}(\bar{\alpha})^{\bar{g}_\alpha}\in k$. 
  By the definition of $\hat{\sigma}$, we now know that $$p(\alpha)^G ~ = ~ \sigma(\bar{p}(\bar{\alpha}))^G ~ = ~ \hat{\sigma}(\bar{p}(\bar{\alpha})^{\bar{g}_\alpha}) ~ \in ~ \hat{\sigma}[k].$$ This shows that $\dot{c}_{g,\alpha}^G\subseteq\hat{\sigma}[k]$ and, together with the above computations, we can conclude that 
   \begin{equation}\label{eq_ImageBarK}
     \hat{\sigma}[k] ~ = ~ \dot{c}_{g,\alpha}^G. 
   \end{equation}

By \eqref{eq_ImageBarK} and the elementarity of $\hat{\sigma}$, we also have $\hat{\sigma}[h_k]=\dot{h}_{g,\alpha}^G$ and $\hat{\sigma}[X_k]=\dot{X}_{g,\alpha}^G$.  
Note that conditions in $\sigma^{{-}1}(\dot{\QQQ}_\alpha)^{\bar{g}_\alpha}$ are  elements of $\HH{\delta}^{H_W[\bar{g}_\alpha]}$ and hence $k$ is a subset of $\HH{\delta}^{H_W[\bar{g}_\alpha]}$. 
 Since the critical point of $\hat{\sigma}$ is $\delta$, it follows that $k$, $h_k$ and $X_k$ are all pointwise fixed by $\hat{\sigma}$. In particular, we have  
\begin{equation}\label{eq_ConditionPair}
    x ~ = ~ (h_k,X_k) ~ = ~ (\dot{h}_{g,\alpha}^G,\dot{X}_{g,\alpha}^G). 
\end{equation}
Since $p(f_0,g)(\alpha)$ is, by definition, the $\PPP_\alpha$-name $\mathsf{pair}_{\PPP_\alpha}(\dot{h}_{g,\alpha},\dot{X}_{g,\alpha})$, this completes the proof of \eqref{eq_Ground_Model_Element}.  

%
%
%
%
%

Part \eqref{item_ExceptPossibly} of the claim follows by the properties of $(h_k,X_k)$ over $H_W[\bar{g}_\alpha]$ discussed above, together with the equality \eqref{eq_ConditionPair}, elementarity of $\hat{\sigma}$, and the fact that $\hat{\sigma}$ fixes bounded subsets of $\delta$ that lie in $H_W[\bar{g}_\alpha]$. 
 For example, to verify requirement \eqref{item_Req_S_node} of Definition \ref{def_PosetAddFunction}, suppose $t$ is in the range of $h_k$.  Then $t$ is in the range of the left coordinate of some condition in $k \subseteq \hat{\sigma}^{{-}1}(\PPP(\dot{S}_\alpha^G,G_0)^{\VV[G]})$, and hence $t$ is an $\hat{\sigma}^{{-}1}(\dot{S}_\alpha^G)$-node in $H_W[\bar{g}_\alpha]$.  By elementarity of $\hat{\sigma}$ and the fact that $\hat{\sigma}$ fixes $y$, it follows that $t$ is an $\dot{S}_\alpha^G$-node in $\VV[G]$. 
  The remaining requirements of Definition \ref{def_PosetAddFunction}, except for requirement \eqref{item_ImagesUpperBoundT_G_0}, are easily verified for the pair displayed in \eqref{eq_ConditionPair} in a similar manner.

Now, we prove that $p(f_0,g) \restriction \alpha$ forces the statements in part \eqref{item_IncSeqRangeDoth} of the claim.  Recall $G$ is an arbitrary $\PPP_\alpha$-generic filter over $\VV$ with $p(f_0,g) \restriction\alpha \in G$. 
 Assume that the ordered pair \eqref{eq_ConditionPair} is \textbf{not} a condition in $\PPP(\dot{S}_\alpha^G,G_0)$ in $\VV[G_i]$. In the following, we show that the statements \eqref{itemclm_CofIsKappa}, \eqref{itemclm_EveryProperIS}, and \eqref{itemclm_NoProperInitial} of the claim hold true in $\VV[G]$. 
  By part \eqref{item_ExceptPossibly} of the claim, it must be requirement \eqref{item_ImagesUpperBoundT_G_0} of Definition \ref{def_PosetAddFunction} that fails. 
  In particular, there is an increasing sequence $\vec{y}=\seq{y_\zeta}{\zeta < \kappa}$ of nodes in the range of $h_k$ in $\VV[G]$ such that the function $f_{\bar{y}}=\bigcup_{\zeta < \kappa} y_\zeta$ is not an element of $\calT(G_0)$. 
   Since the elements of the range of $h_k$ are $\kappa$-sized objects in $H_W[\bar{g}_\alpha]$ and hence in $H_W$ by \eqref{eq_DistExt}, this implies that the domain of $f_{\vec{y}}$ is at most $\delta = (\kappa^+)^{H_W}$.  In summary, there is some ordinal $\gamma\in(S^{\kappa^+}_\kappa)^\VV$ such that 
\begin{equation}
V[G] \models\anf{\textit{$\gamma ~ \leq ~ \delta ~ = ~ (\kappa^+)^{H_W} ~ $ and $ ~ f_{\vec{y}}\restriction \gamma ~ = ~  (\textstyle{\bigcup G_0})(\gamma)$}}.
\end{equation}

By part \eqref{item_DensityIncSeq} of Lemma \ref{lem_DensityLemma} -- viewing $H_W$ as the ground model $\VV$, $H_W[\bar{g}_\alpha]$ as the outer model $\VV_1$, $k$ as the generic filter $K$, and $V[G]$ as the outer model $M$ from the statement of that lemma -- the ordinal $\gamma$ cannot be strictly smaller than $\delta$, and hence we can conclude that $\gamma = \delta$. 
 But this implies that $\cof{W\cap\kappa^+}^\VV=\cof{\delta}^\VV=\cof{\gamma}^\VV=\kappa$, proving part \eqref{itemclm_CofIsKappa} of the claim. 

  In summary, we have shown that $W \cap \kappa^+ = \dom{f_{\vec{y}}}$ and
\begin{equation}\label{eq_f_at_delta_G_0}
   f_{\vec{y}} ~ = ~ (\textstyle{\bigcup G_0})(W \cap \kappa^+). 
\end{equation}
Since $f_{\vec{y}}$ is a union of functions in the range of $h_k$, and \eqref{eq_DistExt} together with the fact that the critical point of $\sigma$ is $\delta$ imply that $h_k \subseteq W$,  
 every proper initial segment of $f_{\vec{y}}$ is an element of $W$. Furthermore, every proper initial segment of $f_{\vec{y}}$ is extended by some $y \in \ran{h_k}$, which is an $\dot{S}_\alpha^G$-node in $\VV[G]$ by part \eqref{item_ExceptPossibly} of the claim.  
 Hence, every proper initial segment of $f_{\vec{y}}$ is an $\dot{S}_\alpha^G$-node in $\VV[G]$, and is an element of $W$.  Together with \eqref{eq_f_at_delta_G_0}, this proves part \eqref{itemclm_EveryProperIS} of the claim.

Finally, to prove part \eqref{itemclm_NoProperInitial} of the claim, suppose $\gamma\in\dom{X_k}$, define $\eta= \dom{X_k(\gamma)}$ and assume, towards a contradiction, that $f_{\vec{y}} \restriction \eta = \bar{X}(\alpha)$. Note that $\eta < \delta$, because $X_k \subseteq H_W[\bar{g}_\alpha]$. In particular, we have $y_\zeta \restriction \eta = X_k(\alpha)$ for some $\zeta<\kappa$. 
 But this contradicts the fact from part \eqref{item_ExceptPossibly} that nothing in the range of $h_k$  extends any function from the range of $X_k$.  
\end{proof}

It remains to prove part \eqref{item_WhatStrongerConditionForces} of the lemma, which will essentially follow from part \eqref{item_IncSeqRangeDoth} of Claim \ref{clm_ClaimRE_WhatPForces}, though we first must dispense with a technicality. 
 Recall that $\Phi_{\alpha+1}$ fails, but $\Phi_\alpha$ holds.  Next, we observe that the failure of $\Phi_{\alpha+1}$ is due to the function $p(f_0,g) \restriction (\alpha+1)$ not being a condition at all (rather than being a condition but failing to extend $g_{\alpha+1}$, or being a condition but failing to be flat):

\begin{claim}\label{clm_SomeExtensionForces}
 Some condition in $\PPP_\alpha$ below $p(f_0,g) \restriction \alpha$ forces that 
  \begin{equation}\label{eq_PairNamed}
    p(f_0,g)(\alpha) ~ = ~ \mathsf{pair}_{\PPP_\alpha}(\dot{h}_{g,\alpha},\dot{X}_{g,\alpha})
  \end{equation}
  is \textbf{not} a condition in $\dot{\QQQ}_\alpha$. 
\end{claim}

\begin{proof}[Proof of Claim \ref{clm_SomeExtensionForces}]
  Assume not, i.e.\ suppose that $p(f_0,g) \restriction \alpha$ forces that the pair in \eqref{eq_PairNamed} to be a condition in $\dot{\QQQ}_\alpha$.  
   Since the components of the pair in \eqref{eq_PairNamed} are given by the union of the left and right coordinates of $\dot{c}_{g,\alpha}$, the fact that the ordering of $\dot{\QQQ}_\alpha$ is given reversed inclusion now implies that the condition $p(f_0,g) \restriction \alpha$ forces $p(f_0,g)(\alpha)$ to be stronger than every condition in $\dot{c}_{g,\alpha}$. 
     Since the validity of $\Phi_\alpha$ implies that   $p(f_0,g) \restriction \alpha$ is stronger than every condition in $g_\alpha$, it follows that $$p(f_0,g) \restriction (\alpha+1) ~ = ~ (p(f_0,g) \restriction\alpha)^\frown(\alpha,\mathsf{pair}_{\PPP_\alpha}(\dot{h}_{g,\alpha},\dot{X}_{g,\alpha}))$$ is stronger than every condition in $g_{\alpha+1}$.  

 Furthermore, by Claim \ref{clm_ClaimRE_WhatPForces}, there is an $x_\alpha \in\VV$ such that $$p(f_0,g) \restriction \alpha ~ \Vdash_{\PPP_\alpha} ~ \anf{\check{x}_\alpha = p(f_0,g)(\alpha)}.$$  Since $\Phi_\alpha$ holds, we know that $p(f_0,g) \restriction \alpha$ is flat. Let $\seq{x_\ell}{\ell \in W \cap\alpha}$ witness its flatness.  Then the sequence $\seq{x_\ell}{\ell \in W \cap(\alpha+1)}$ witnesses the flatness of $p(f_0,g) \restriction (\alpha+1)$. 
In summary, $p(f_0,g) \restriction (\alpha+1)$ is a flat condition below every member of $g_{\alpha+1}$, contradicting the fact that $\Phi_{\alpha+1}$ fails. 
\end{proof}

 Part \eqref{item_WhatStrongerConditionForces} of the lemma now follows immediately from Claim \ref{clm_SomeExtensionForces}, and part \eqref{item_IncSeqRangeDoth} of Claim \ref{clm_ClaimRE_WhatPForces}.  
\end{proof}

The above results now allow us to prove the following key lemma.

\begin{lemma}\label{lem_P_Jensen_complete}
  The poset $\PPP$ is $\kappa^+$-Jensen complete, and the flat conditions are dense in $\PPP$.
\end{lemma}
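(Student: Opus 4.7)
My plan is to derive both conclusions from a single application of Lemma \ref{lem_KeyBackgroundLemma_NoTails} with a judicious choice of $f_0$. Clause \eqref{item_FilterExists2} of Definition \ref{def_JensenComplete} at $\tau=\kappa^+$ is immediate because the ${<}\kappa^+$-support iteration $\PPP$ is ${<}\kappa$-closed (each iterand is ${<}\kappa$-closed), so Lemma \ref{lem_JensenCompleteClosureSuccessor} applies. To verify both clause \eqref{item_LowerBound} and the density of flat conditions, it will suffice to prove the following: for every $W \in \POTI{\HH{\theta}}{\kappa^+}$ with $W \prec (\HH{\theta},\in,\PPP,\lhd)$ and $\kappa \subseteq W$, and every $(W,\PPP)$-generic filter $g$, there is a flat condition in $\PPP$ below every element of $g$. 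The class of such $W$ contains a club in $\POTI{\HH{\theta}}{\kappa^+}$, which yields clause \eqref{item_LowerBound}; and for the density statement, given $p \in \PPP$, I invoke Lemmas \ref{lem_IA_is_stat} and \ref{lem_ClosureYieldsGenerics2}(1) to produce $W \in \text{IA}_\kappa$ of the above form with $p \in W$, together with a $(W,\PPP)$-generic filter $g$ containing $p$, and then apply the claim.

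To prove the claim, fix $W$ and $g$ as above and set $\gamma = W \cap \kappa^+$ and $f = \bigcup g_0$. Since $\QQQ_0$ is ${<}\kappa^+$-directed closed by Proposition \ref{proposition:Q_0Closure} and $g_0$ is a $\kappa$-sized directed subset of $\QQQ_0$, we have $f \in \QQQ_0$. My choice of $f_0 \supseteq f$ depends on the cofinality of $\gamma$ in $\VV$. If $\cof{\gamma}^\VV \neq \kappa$, simply set $f_0 = f$; the bad case of Lemma \ref{lem_KeyBackgroundLemma_NoTails} cannot then arise, since it demands $\cof{\gamma}^\VV = \kappa$. Otherwise $\gamma \in S^{\kappa^+}_\kappa$ in $\VV$, and I pick a function $h \colon \gamma \to \gamma$ satisfying both (a) $h \restriction \delta \neq f(\delta)$ for every $\delta \in \dom{f}$, and (b) there exists $\eta < \gamma$ with $h \restriction \eta \notin W$. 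Property (a) ensures that $f_0 := f \cup \{(\gamma,h)\}$ is again a condition in $\QQQ_0$ extending $f$, while (b) will be used to block the bad case. Both requirements on $h$ are jointly satisfiable by counting: (a) excludes at most $\lvert\dom{f}\rvert \leq \kappa$ many $h$'s diagonally, while (b) excludes only the $\kappa$-many $h$ all of whose proper initial segments lie in the $\kappa$-sized set $W$; the space of functions $\gamma \to \gamma$ has size strictly greater than $\kappa$, so an $h$ meeting both conditions exists.

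It remains to see that the good case of Lemma \ref{lem_KeyBackgroundLemma_NoTails} must hold for this $f_0$. Suppose for contradiction the bad case applies at some $\alpha \in W \cap [1,\varepsilon)$. Then some condition $q \leq_{\PPP_\alpha} p(f_0,g) \restriction \alpha$ forces that every proper initial segment of $(\textstyle\bigcup \dot{G}_0)(\gamma)$ lies in $\check{W}$. However, $f_0$ is the zeroth coordinate of $p(f_0,g) \restriction \alpha$, so in any generic extension by $\PPP_\alpha$ containing $q$, the induced $\QQQ_0$-generic filter $G_0$ contains $f_0$, forcing $(\bigcup G_0)(\gamma) = h$. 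The forced statement thus reduces to the ground-model assertion that every proper initial segment of $h$ belongs to $W$, directly contradicting (b). Hence the good case holds, and $p(f_0,g)$ is a flat condition in $\PPP$ extending every element of $g$, as required.

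The main obstacle in this plan is the joint realizability of (a) and (b), together with the subtler fact that a single modification at the zeroth coordinate must block the bad case \emph{uniformly} across all levels $\alpha$. The key insight making this uniform obstruction work is that the bad case at any level ultimately reflects the approximability of the $\gamma$-th value of $\bigcup G_0$ from within $W$, and condition (b) defeats this approximability once and for all.
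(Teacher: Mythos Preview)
Your overall strategy matches the paper's: reduce to showing that any $(W,\PPP)$-generic filter $g$ has a flat lower bound, split on whether $\cof{W\cap\kappa^+}=\kappa$, and in the crucial case choose the zeroth coordinate $f_0$ so as to block alternative~\eqref{item_BadCase} of Lemma~\ref{lem_KeyBackgroundLemma_NoTails} via clause~\eqref{item_EveryProperIS}. The treatment of density of flat conditions is also the same.

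The gap is in your counting argument for the existence of $h$ in the case $\cof{\gamma}=\kappa$. Neither bound is correct. For~(a): for a single $\delta\in\dom{f}$, the set of $h\colon\gamma\to\gamma$ with $h\restriction\delta=f(\delta)$ consists of all extensions of the fixed function $f(\delta)$ to a function $\gamma\to\gamma$, and there are $2^\kappa$ such extensions, not at most $\kappa$. For~(b): the functions failing (b) are precisely the branches of length $\gamma$ through the tree $W\cap{}^{<\gamma}\gamma$. This tree has at most $\kappa$ nodes, but since $\cof{\gamma}=\kappa$, it can have as many as $2^\kappa$ cofinal branches (a Kurepa-type situation), so the bound ``$\kappa$-many'' is unjustified. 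Since the total space ${}^\gamma\gamma$ also has size $2^\kappa$, a pure cardinality argument cannot conclude.

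The repair is the paper's choice, and it makes (a) a \emph{consequence} of (b) rather than an independent constraint. Pick any $s\colon\kappa\to\kappa$ with $s\notin W$ (possible since $\betrag{W}=\kappa<2^\kappa$), and let $h\colon\gamma\to\gamma$ be any extension of $s$. Then (b) holds with $\eta=\kappa$. For (a), note that every $\delta\in\dom{f}$ lies in $S^{\kappa^+}_\kappa$ and hence satisfies $\delta\geq\kappa$; moreover $f(\delta)\in W$, because $\delta\in\dom{q}$ for some $q\in g_0\subseteq W$ with $q(\delta)=f(\delta)$. If $h\restriction\delta=f(\delta)$, then $h\restriction\delta\in W$ and so $s=(h\restriction\delta)\restriction\kappa\in W$, contrary to the choice of $s$. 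Thus $h\restriction\delta\neq f(\delta)$ for every $\delta\in\dom{f}$, and $f_0=f\cup\{(\gamma,h)\}$ is a condition in $\QQQ_0$ as required.
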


Before we prove this result, we make a couple of remarks.

\begin{remark}
 In {\cite[Claim 3.13]{MR1877015}}, a weaker version of Lemma \ref{lem_P_Jensen_complete}, stating that $\PPP$ is \emph{$\kappa$-proper}, was proven. This concept was defined in {\cite[Definition 3.4]{MR1877015}} and only makes sense under the assumption that $\kappa^{{<}\kappa} = \kappa$. It, in particular, implies that the given poset is ${<}\kappa^+$-distributive.

In the non-$\GCH$ setting, in particular, when we do not assume $\kappa^{{<}\kappa} = \kappa$, perhaps the most natural analogue of $\kappa$-properness is our notion of $\text{IA}_\kappa$-proper (Defininition \ref{def_ProperForIA}). 
 In fact, changing just a few words in the proof of {\cite[Claim 3.13]{MR1877015}} would suffice to prove that (even without $\GCH$) the poset $\PPP$ is proper for $\text{IA}_\kappa$ and is ${<}\kappa^+$-distributive.   However, that conclusion does not suffice for applications in our main theorems, since, for example, $\text{IA}_{\omega_1}$-properness, even together with ${<}\omega_2$-strategic closure, does not guarantee preservation of the \emph{Proper Forcing Axiom}.\footnote{E.g.\ if $2^{\omega_1} = \omega_2$, one can code $\text{IA}_{\omega_1} \cap \Poti{\HH{\omega_2}}{\omega_2}$ as a stationary subset $S$ of $S^2_1$.  Then, shooting an $\omega_1$-club through $S$ with initial segments is $\text{IA}_{\omega_1}$-totally proper, but kills $\PFA$.}

We seem to need the stronger property of $\kappa^+$-Jensen completeness (i.e.\ ${<}\kappa^+$-directed closure), which we prove in Lemma \ref{lem_P_Jensen_complete}.  This requires some reorganization and strengthening of the argument of {\cite[Claim 3.13]{MR1877015}}, but the main ideas of the proof of Lemma \ref{lem_P_Jensen_complete} are very similar to the proof of {\cite[Claim 3.13]{MR1877015}}.
\end{remark}

\begin{remark}
Iterations using ${<}\kappa^+$-support, where each iterand is ${<}\kappa^+$-directed closed, are themselves ${<}\kappa^+$-directed closed.  However, this fact seems to \emph{not} be applicable to the iteration $\PPP_\varepsilon$ constructed in Definition \ref{definition:Iteration}. 
 That is, it is not clear if, say, the first non-trivial poset used of the form $\PPP(S,G_0)$ is equivalent to a ${<}\kappa^+$-directed closed from the point of view of $V[G_0]$ (and we suspect it is not, in general).  The key to Lemma \ref{lem_P_Jensen_complete} (and to the analogous, but weaker {\cite[Claim 3.13]{MR1877015}}) is the flexibility in having $G_0$ not be decided yet.
\end{remark}

\begin{proof}[Proof of  Lemma \ref{lem_P_Jensen_complete}]
  First, we check $\kappa^+$-Jensen completeness.  Since each iterand is ${<}\kappa$-closed and the iteration uses $\kappa$-sized supports, the entire iteration is ${<}\kappa$-closed.  So by Corollary \ref{cor_MainCor}, to show that $\PPP$ is $\kappa^+$-Jensen complete, it suffices to show that whenever 
  \begin{itemize}
   \item $W \prec (\HH{\theta},\in,\PPP)$ with $\betrag{W}=\kappa$ and $W \cap \kappa^+ \in \kappa^+$, and 

   \item $g \subseteq W \cap \PPP$ is a $(W,\PPP)$-generic filter,
  \end{itemize}
  then $g$ has a lower bound in $\PPP$. 
     So fix such a filter $g$ for the remainder of the proof.  Given $\alpha \in W \cap [0,\varepsilon]$, define $g_\alpha$ as in Definition \ref{def_ComponentNotation_NoTails}. 
 Set $\delta = W \cap \kappa^+$.  We consider two cases: 

 \smallskip
 
 \paragraph{\textbf{Case 1:}  $\cof{\delta}< \kappa$.}  Set $f_0=\bigcup g_0$, and consider the function $p(f_0,g)$ from Definition \ref{def_ComponentNotation_NoTails}. We claim that $p(f_0,g)$ is flat condition and lies below all members of $g$.  
  Assume not. Then, by Lemma \ref{lem_KeyBackgroundLemma_NoTails}, there is an $\alpha \in W \cap [1,\varepsilon)$ such that $p(f_0,g) \restriction \alpha$ is a condition below all conditions in $g_\alpha$, and there is some $q_\alpha \leq_{\PPP_\alpha} p(f_0,g) \restriction \alpha$ in $\PPP_\alpha$ that forces all the statements in part \eqref{item_WhatStrongerConditionForces} of Lemma \ref{lem_KeyBackgroundLemma_NoTails} to hold.  In particular, by part \eqref{item_CofIsKappa}, we know that $\cof{\delta} =\cof{W \cap \kappa^+}=\kappa$, contrary to our case. 

\smallskip

\paragraph{\textbf{Case 2}:  $\cof{\delta}= \kappa$.} Since $\betrag{W}=\kappa < \delta$, we can fix $\map{t}{\delta}{\delta}$ such that $t \restriction \kappa$ is not an element of $W$. Define $$ f_0 ~ = ~  (\textstyle{\bigcup g_0}) \cup \{(\delta,t)\}.$$ 
 Given $\gamma \in \delta \cap S^{\kappa^+}_\kappa$, we have $t \restriction \gamma \notin W$ and $(\bigcup g_0)(\gamma) \in W$. In particular, we have $t \restriction \gamma \neq (\bigcup g_0)(\gamma)$ for all $\gamma \in \delta \cap S^{\kappa^+}_\kappa$. Since $\cof{\delta} = \kappa$, this shows that  $f_0$ is a condition in $\QQQ_0$ that extends $\bigcup g_0$. 

Let $p(f_0,g)$ be the function  defined in Definition \ref{def_ComponentNotation_NoTails}. We claim that $p(f_0,g)$ is a flat condition that lies below every element of $g$.  Assume not.  Then, by Lemma \ref{lem_KeyBackgroundLemma_NoTails}, there is an $\alpha \in W \cap [1,\varepsilon)$ such that $p(f_0,g) \restriction \alpha$ is a condition in $\PPP_\alpha$, and that, by part \eqref{item_EveryProperIS} of that lemma, there is some condition $q \leq_{\PPP_\alpha} p(f_0,g) \restriction \alpha$ such that $q$ forces that every proper initial segment of $(\bigcup \dot{G}_0)(\check{\delta})$ is an element of $W$.  But the 0th component of $p(f_0,g) \restriction \alpha$, and  
 hence of $q$, extends the function $f_0$, and therefore  $$q(0)\Vdash_{\PPP_0}\anf{(\textstyle{\bigcup\dot{G}_0})(\check{\delta}) = \check{t}}.$$  In particular, every proper initial segment of $t$ is an element of $W$, contrary to our choice of $t$.

\smallskip

This completes the proof of $\kappa^+$-Jensen completeness.  To see that the flat conditions are dense in $\PPP$, let $p_0$ be any condition in $\PPP$.  Fix  $W \prec (\HH{\theta},\in,\PPP,p_0)$ such that $\betrag{W}=\kappa \subseteq W$ and $W \in \text{IA}_\kappa$. 
 By Lemma \ref{lem_ClosureYieldsGenerics2} and the ${<}\kappa$-closure of $\PPP$, there exists a $(W,\PPP)$-generic filter $g$ such that $p_0 \in g$.  
 Note that $W \in \text{IA}_\kappa$ implies that $\cof{W \cap \kappa^+} = \kappa$. This shows that we can repeat the argument from the above Case 2, define $f_0$ as above and  conclude that the function $p(f_0,g)$ is a flat condition that is below every member of $g$ and therefore also below $p_0$.  
\end{proof}

Lemma \ref{lem_P_Jensen_complete} and Corollary \ref{cor_MainCor} now immediately yield the following corollary:

\begin{corollary}\label{cor_P_distributive}
 The poset $\PPP$ is forcing equivalent to a ${<}\kappa^+$-directed closed forcing.  In particular, it adds no new sets of size $\kappa$, and, in the case $\kappa = \omega_1$, it preserves all standard forcing axioms, such as $\MM^{++}$. \qed
\end{corollary}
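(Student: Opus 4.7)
The plan is straightforward and routes entirely through the two results quoted in the statement. First, I would observe that Lemma \ref{lem_P_Jensen_complete} has already established that $\PPP$ is $\kappa^+$-Jensen complete. Since $\kappa^+$ is a successor cardinal, Lemma \ref{lem_GeneralizeJensenComplete2} applies immediately and delivers the first assertion: $\PPP$ is forcing equivalent to a ${<}\kappa^+$-directed closed poset. (Alternatively, one could route the argument through Corollary \ref{cor_MainCor}, noting that each iterand in Definition \ref{definition:Iteration} is ${<}\kappa$-closed and that the iteration uses ${<}\kappa^+$-supports, so $\PPP$ itself is ${<}\kappa$-closed, while clause \eqref{item_LowerBound} of Definition \ref{def_JensenComplete} was exactly what was verified in the proof of Lemma \ref{lem_P_Jensen_complete} via the $p(f_0,g)$ construction.)

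For the two remaining assertions, I would argue from what ${<}\kappa^+$-directed closure automatically gives. Being forcing equivalent to a ${<}\kappa^+$-directed closed poset, $\PPP$ is in particular forcing equivalent to a ${<}\kappa^+$-closed poset, hence is ${<}\kappa^+$-distributive; therefore forcing with $\PPP$ adds no new function from $\kappa$ to the ordinals, and in particular no new sets of size $\kappa$. For the case $\kappa = \omega_1$, one invokes the well-known results of \cite{doi:10.1002/malq.200410101} and \cite{MR1782117} already cited in the introduction, which show that ${<}\omega_2$-directed closed posets preserve $\MM^{++}$ and the other standard forcing axioms considered in this paper. There is essentially no obstacle here: all of the genuine difficulty has already been absorbed into Lemma \ref{lem_P_Jensen_complete} and Lemma \ref{lem_GeneralizeJensenComplete2}, and this corollary is merely the packaging of their consequences.
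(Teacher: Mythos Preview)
Your proposal is correct and matches the paper's approach: the corollary is stated with a \qed and is introduced as an immediate consequence of Lemma \ref{lem_P_Jensen_complete} and Corollary \ref{cor_MainCor}, exactly the route you describe (your primary route via Lemma \ref{lem_GeneralizeJensenComplete2} is equivalent, since Lemma \ref{lem_P_Jensen_complete} already gives full $\kappa^+$-Jensen completeness). The remaining assertions about distributivity and preservation of forcing axioms follow, as you say, from standard facts and the references \cite{doi:10.1002/malq.200410101} and \cite{MR1782117}.
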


Remember that the order $o(p)$ of a condition in a poset of the form $\PPP(S,G_0)$ was defined in part \eqref{defi_OrderOfCondition} of Definition \ref{def_PosetAddFunction}.

\begin{lemma}\label{lem_FlatImpliesBounded}
If $p$ is a flat condition in $\PPP$, then there exists $\beta < \kappa^+$ with the property that $$p\restriction\alpha\Vdash_{\PPP_\alpha}\anf{o(p(\alpha))\leq\check{\beta}}$$ holds for all $1\leq\alpha\in\supp{p}$.  
\end{lemma}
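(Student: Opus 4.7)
The plan is to use the definition of flatness to pull back each iterand component $p(\alpha)$ to a specific ground-model condition $x_\alpha \in \HH{\kappa^+}$, observe that the order of such a condition is necessarily bounded below $\kappa^+$, and then take the supremum over the at-most-$\kappa$-sized support.

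First I would fix a sequence $\seq{x_\alpha}{\alpha \in \supp{p}}$ witnessing the flatness of $p$ as in Definition \ref{def_Flat}, so that for every $\alpha\in\supp{p}$ with $\PPP_\alpha$ being ${<}\kappa^+$-distributive we have $x_\alpha\in\HH{\kappa^+}$ and $p\restriction\alpha\Vdash_{\PPP_\alpha}\anf{p(\alpha)=\check{x}_\alpha}$. By the construction of the iteration in Definition \ref{definition:Iteration}, for such $\alpha\geq 1$ the iterand $\dot{\QQQ}_\alpha$ is a name for a poset of the form $\PPP(\dot{S}_\alpha,G_0)$, and hence $x_\alpha$ is (forced to be, and hence by flatness really is in $\VV$) a pair $(h_\alpha,X_\alpha)$ satisfying the conditions of Definition \ref{def_PosetAddFunction}. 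In particular, $h_\alpha$ and $X_\alpha$ are both partial functions of cardinality at most $\kappa$ on $\kappa^+$, so their domains (and the ordinal $\sup(\ran{t})$ for each $t\in\dom{h_\alpha}$) are bounded below $\kappa^+$ by regularity. Consequently, $o(x_\alpha)$ is a well-defined ordinal of $\VV$ strictly less than $\kappa^+$. For the remaining indices $\alpha\in\supp{p}$ where $\PPP_\alpha$ fails to be ${<}\kappa^+$-distributive, $\dot{\QQQ}_\alpha$ is a name for a trivial poset, so $p(\alpha)$ is forced to be the (essentially unique) trivial condition, whose order we can take to be $0$.

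Next, since $\PPP$ uses ${<}\kappa^+$-supports, we have $\betrag{\supp{p}}\leq\kappa$. Define
\begin{equation*}
 \beta ~ = ~ \sup\Set{o(x_\alpha)}{1\leq\alpha\in\supp{p}}.
\end{equation*}
By the regularity of $\kappa^+$ and the fact that we are taking a supremum of at most $\kappa$ ordinals each strictly below $\kappa^+$, we obtain $\beta<\kappa^+$. For every $1\leq\alpha\in\supp{p}$ with $\PPP_\alpha$ being ${<}\kappa^+$-distributive, the forcing relation yields
\begin{equation*}
 p\restriction\alpha ~ \Vdash_{\PPP_\alpha} ~ \anf{o(p(\alpha))=o(\check{x}_\alpha)\leq\check{\beta}},
\end{equation*}
since the value of the $o$-function is computed purely from the pair $(h_\alpha,X_\alpha)$ and is therefore absolute. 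The remaining indices are handled vacuously by the trivial-poset convention. This yields the desired $\beta$.

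There is no real obstacle here: the content of the lemma is essentially bookkeeping, relying on (i) the fact that flatness pins the iterand components to genuine ground-model objects of size $\leq\kappa$, and (ii) the regularity of $\kappa^+$ together with the ${<}\kappa^+$-support convention. The only minor point to be careful about is the handling of non-${<}\kappa^+$-distributive initial segments, but those contribute only trivial conditions to $p$ and so pose no difficulty.
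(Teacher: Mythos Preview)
Your proof is correct and takes essentially the same approach as the paper: fix the flatness witness $\seq{x_\alpha}{\alpha\in\supp{p}}$, bound each $o(x_\alpha)$ below $\kappa^+$ using that $x_\alpha\in\HH{\kappa^+}$, and take the supremum over the ${\leq}\kappa$-sized support. The paper phrases the per-coordinate bound slightly differently (choosing $\beta_\alpha$ not in the transitive closure of $x_\alpha$, rather than computing $o(x_\alpha)$ directly), but this is cosmetic.
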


\begin{proof}
 Let $\seq{x_\alpha}{\alpha\in\supp{p}}$ be a sequence witnessing the flatness of $p$. 
 For each $\alpha\in\supp{p}$, pick $\beta_\alpha < \kappa^+$ such that $\beta_\alpha$ is not in the transitive closure of $x_\alpha$.  Since $\betrag{\supp{p}} \leq \kappa$, we know that $$\beta ~ = ~ \sup\Set{\beta_\alpha}{\alpha\in\supp{p}} ~ < ~ \kappa^+$$ has the desired properties.  
\end{proof}

Our next task is to prove that tails of the iteration behave nicely.  But first we need \emph{tail} versions of Definition \ref{def_ComponentNotation_NoTails} and Lemma \ref{lem_KeyBackgroundLemma_NoTails}.  
 Note that in Definition \ref{def_ComponentNotation_Tails} below, 
  since $\alpha_0 \geq 1$, the entire filter $G_0$ has already been determined. So unlike Definition \ref{def_ComponentNotation_NoTails}, the candidate for a condition below $g$ will not involve any $f_0$.

\begin{definition}\label{def_ComponentNotation_Tails}
 Suppose that $\alpha_0 \in [1,\varepsilon)$ and $G_{\alpha_0}$ is  $\PPP_{\alpha_0}$-generic over $\VV$.  
  Working in $\VV[G_{\alpha_0}]$, suppose that  $W  \prec (\HH{\theta}[G_{\alpha_0}], \in, \PPP/G_{\alpha_0})$ with $\betrag{W}=\kappa \subseteq W$, and $g \subseteq W \cap \PPP/G_{\alpha_0}$ is a $(W,\PPP/G_{\alpha_0})$-generic filter.  
   For each $\alpha \in W \cap [\alpha_0,\varepsilon)$, define $\PPP_\alpha/G_{\alpha_0}$-names 
    $\dot{c}_{g,\alpha}$, $\dot{h}_{g,\alpha}$ and $\dot{X}_{g,\alpha}$ analogously to Definition \ref{def_ComponentNotation_NoTails},
 and define a function $p(g)$ with domain $W \cap [\alpha_0,\varepsilon)$ by setting $$p(g) ~ = ~ \seq{\mathsf{pair}_{\PPP_\alpha/{G_{\alpha_0}}}(\dot{h}_{g,\alpha},\dot{X}_{g,\alpha})}{i \in W \cap [\alpha_0,\varepsilon)}.$$ 
\end{definition}

We now also have a \emph{tail} variant of Lemma \ref{lem_KeyBackgroundLemma_NoTails}:

\begin{lemma}\label{lem_KeyBackgroundLemma_Tails}
 Suppose $\alpha_0 \in [1,\varepsilon)$ and $G_{\alpha_0}$ is $\PPP_{\alpha_0}$-generic over $\VV$. 
  Work in $V[G_{i_0}]$ and suppose $W$ and $g$ are as in Definition \ref{def_ComponentNotation_Tails}. Given $\alpha \in [\alpha_0,\varepsilon]$, set $$g_\alpha ~ = ~  \Set{p\restriction\alpha}{p\in g}.$$ Then one of the following statements holds:  
\begin{enumerate}
 \item\label{item_GoodCase_Tails} $p(g)$ is a flat condition in $\PPP/G_{\alpha_0}$ that extends every element of $g$. 
 
 \item\label{item_BadCase_Tails} There is an $\alpha \in W \cap [\alpha_0,\varepsilon)$ such that the following statements hold: 
 \begin{enumerate}
   \item   $p(g) \restriction \alpha$ is a flat condition in $\PPP_\alpha/G_{\alpha_0}$ that is stronger than every element of $g_\alpha$. 
   
   \item\label{item_NoNewKappaSeq_Tails} If $\tau \in W$ is a $\PPP_\alpha/G_{\alpha_0}$-name for a function from $\kappa$ to the ordinals, then  $$p(g) \restriction \alpha\Vdash_{\PPP_\alpha/G_{\alpha_0}}\anf{\tau\in\check{W}}.$$  

   \item\label{item_WhatStrongerConditionForces_Tails} There is a condition $q$ in $\PPP_\alpha/G_{\alpha_0}$ below $p(g)\restriction\alpha$ with the property that the following statements hold true in $\VV[G_{\alpha_0},G]$, whenever $G$ is $\PPP_\alpha/G_{\alpha_0}$-generic over $\VV[G_{\alpha_0}]$ with $q\in G$:  
   \begin{enumerate}
    \item\label{item_CofIsKappa_Tails} $\cof{W \cap \kappa^+} = \kappa$. 
    
    \item\label{item_EveryProperIS_Tails} Every proper initial segment of $(\bigcup G_0)(W \cap \kappa^+)$ is an element of $W$, and is an $\dot{S}_\alpha^G$-node. 
    
  \item\label{item_NoProperInitial_Tails} No proper initial segment of $(\bigcup G_0)(W \cap \kappa^+)$ is an element of $\ran{\dot{X}_{g,\alpha}^G}$.
   \end{enumerate}
 \end{enumerate}
 \end{enumerate} 
\end{lemma}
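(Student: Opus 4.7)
The plan is to mirror the proof of Lemma \ref{lem_KeyBackgroundLemma_NoTails} verbatim, but to work over the new ground model $\VV[G_{\alpha_0}]$ with the quotient forcing $\PPP/G_{\alpha_0}$ playing the role of $\PPP$. Since $G_0$ is already fully determined (being derivable from $G_{\alpha_0}$), no ``parameter $f_0$'' is needed: the function $p(g)$ is the direct analogue of $p(f_0,g)$. Set $g_\varepsilon = g$ and, for each $\beta \in [\alpha_0,\varepsilon]$, let $\Phi_\beta$ be the predicate asserting that $p(g)\restriction\beta$ is a flat condition in $\PPP_\beta/G_{\alpha_0}$ that extends every element of $g_\beta$. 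If $\Phi_\varepsilon$ holds we are in case \eqref{item_GoodCase_Tails}; otherwise let $\beta$ be the least ordinal in $[\alpha_0,\varepsilon]$ where $\Phi_\beta$ fails. Since $\PPP_{\alpha_0}/G_{\alpha_0}$ is trivial, $\Phi_{\alpha_0}$ holds vacuously, so $\beta > \alpha_0$.

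The argument of Claim \ref{clm_PropOfj} transfers without change to show that $\beta = \alpha + 1$ for some $\alpha \in W \cap [\alpha_0,\varepsilon)$: the limit case uses that supports of conditions in $g$ are $\kappa$-sized elements of $W$, together with coherence of flatness witnesses; the case $\alpha\notin W$ would give $p(g)\restriction\beta = p(g)\restriction\alpha$, contradicting minimality. By minimality of $\beta$, $\Phi_\alpha$ is exactly clause (a) of case \eqref{item_BadCase_Tails}, and clause (b) then follows by applying Lemma \ref{lem_TotalMasterDist} to the $(W,\PPP_\alpha/G_{\alpha_0})$-total master condition $p(g)\restriction\alpha$.

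For clause (c) I would prove the analogue of Claim \ref{clm_ClaimRE_WhatPForces}. Let $H_W$ denote the transitive collapse of $W$ and $\map{\sigma}{H_W}{W}$ its inverse, so $\crit{\sigma} = \delta := W\cap\kappa^+$. The collapsed $\QQQ_0$-generic $\sigma^{-1}(G_0)$ is already an element of $H_W$ (since $G_0$ is canonically named from the parameters in $W$), and plays the role of the ``$G_0$'' of Lemma \ref{lem_DensityLemma}. Set $\bar{g}_\gamma = \sigma^{-1}[g_\gamma]$ for $\gamma \in W \cap [\alpha_0,\varepsilon]$ and define $k = \Set{q(\bar{\alpha})^{\bar{g}_\alpha}}{q\in\bar{g}_{\alpha+1}}$, which is $\sigma^{-1}(\dot{\QQQ}_\alpha)^{\bar{g}_\alpha}$-generic over $H_W[\bar{g}_\alpha]$. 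Since $p(g)\restriction\alpha$ is a total master condition and $\PPP_\alpha/G_{\alpha_0}$ is ${<}\kappa^+$-distributive, Lemma \ref{lem_TotalMasterDist} yields $H_W \cap {}^\kappa\Ord = H_W[\bar{g}_\alpha] \cap {}^\kappa\Ord$; we may therefore apply Lemma \ref{lem_DensityLemma} with ground model $H_W$, outer model $H_W[\bar{g}_\alpha]$, and generic $K = k$ to produce $h_k$ and $X_k$ with the stated properties. After forcing with $p(g)\restriction\alpha$, the map $\sigma$ lifts canonically to an elementary $\hat{\sigma} \colon H_W[\bar{g}_\alpha] \to W[G] \prec \HH{\theta}[G_{\alpha_0}][G]$ with $\hat{\sigma}[k] = \dot{c}_{g,\alpha}^G$; since $\crit{\hat{\sigma}} = \delta$, the pair $(h_k,X_k)$ is pointwise fixed and equals $(\dot{h}_{g,\alpha}^G,\dot{X}_{g,\alpha}^G)$. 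Failure of $\Phi_{\alpha+1}$ forces (exactly as in Claim \ref{clm_SomeExtensionForces}) that this pair fails to be a condition; the only clause of Definition \ref{def_PosetAddFunction} that can fail is \eqref{item_ImagesUpperBoundT_G_0}, giving an increasing sequence $\vec{y}$ of elements of $\ran{\dot{h}_{g,\alpha}^G}$ with $f_{\vec{y}}\restriction\gamma = (\bigcup G_0)(\gamma)$ for some $\gamma \in (S^{\kappa^+}_\kappa)^\VV$. Part (4) of Lemma \ref{lem_DensityLemma} forces $\gamma \geq \delta$, while the domain bound gives $\gamma \leq \delta$, so $\gamma = \delta$, proving (c)(i); the remaining clauses (c)(ii)--(c)(iii) then follow exactly as in the original argument, using that $h_k \subseteq H_W$ (hence initial segments of $f_{\vec{y}}$ lie in $W$) and the no-extension property of $\ran{h_k}$ over $\ran{X_k}$ from Lemma \ref{lem_DensityLemma}(3).

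The main obstacle, as in Lemma \ref{lem_KeyBackgroundLemma_NoTails}, is the careful bookkeeping of the transitive collapse together with the correct application of Lemma \ref{lem_DensityLemma} and the precise identification of the forced value of $p(g)(\alpha)$. In the tail setting this is actually cleaner than in the untailed case: $G_0$ is already part of the ambient ground model $V[G_{\alpha_0}]$, so its collapse lives inside $H_W$ and serves directly as the ``$G_0$'' of Lemma \ref{lem_DensityLemma}, eliminating the auxiliary $f_0$ machinery that was needed before.
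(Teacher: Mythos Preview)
Your proposal is correct and follows exactly the approach the paper intends: the paper's own proof consists of the single sentence ``The proof is almost identical to the proof of Lemma \ref{lem_KeyBackgroundLemma_NoTails}, except we work in $\VV[G_{\alpha_0}]$ instead of $\VV$. We leave the details to the reader.'' You have faithfully supplied those details, correctly noting that since $G_0$ is already determined in $\VV[G_{\alpha_0}]$ there is no need for the auxiliary parameter $f_0$, and otherwise running the minimal-counterexample argument, the transitive-collapse machinery, and the application of Lemma \ref{lem_DensityLemma} precisely as in the non-tail case.
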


\begin{proof}
 The proof is almost identical to the proof of Lemma \ref{lem_KeyBackgroundLemma_NoTails}, except we work in $\VV[G_{\alpha_0}]$ instead of $\VV$.  We leave the details to the reader.  
\end{proof}

\begin{lemma}\label{lem_TailsLessKappaClosed}
 If $\alpha  < \varepsilon$ and $G$ is $\PPP_\alpha$-generic over $\VV$, then the tail of the iteration $\PPP/G$ is ${<}\kappa$-closed in $\VV[G]$.  
\end{lemma}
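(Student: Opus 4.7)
The plan is to prove this by the standard argument that ${<}\kappa^+$-support iterations of ${<}\kappa$-closed iterands are ${<}\kappa$-closed, specialized to the tail $\PPP/G$. The key preliminary observation is that every iterand $\dot{\QQQ}_\beta$ for $\beta \in [\alpha,\varepsilon)$ is (forced to be) ${<}\kappa$-closed: by clauses (2), (3) and (4) of Definition \ref{definition:Iteration}, $\dot{\QQQ}_\beta$ is always a name either for a poset of the form $\PPP(\dot{S}_\beta,G_0)$ or for a trivial poset, and the former is ${<}\kappa$-closed by the remark made right after Definition \ref{def_PosetAddFunction}.

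Given this, I would fix in $\VV[G]$ a descending sequence $\vec{p} = \seq{p_\xi}{\xi<\lambda}$ in $\PPP/G$ with $\lambda<\kappa$, set $S = \bigcup_{\xi<\lambda}\supp{p_\xi}$, and note that $\betrag{S}\leq\lambda\cdot\kappa\leq\kappa<\kappa^+$, so any function on $[\alpha,\varepsilon)$ whose support is contained in $S$ is a legitimate condition of the ${<}\kappa^+$-support iteration. I would then construct a lower bound $p$ by recursion on $\beta \in [\alpha,\varepsilon]$, maintaining the inductive hypothesis that $p\restriction\beta$ is a condition in $\PPP_\beta/G$ that extends every $p_\xi\restriction\beta$.

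At a successor step $\beta+1$ with $\beta\in S$, the sequence $\seq{p_\xi(\beta)}{\xi<\lambda}$ is forced below $p\restriction\beta$ to be a descending sequence of length less than $\kappa$ in $\dot{\QQQ}_\beta$, so by the ${<}\kappa$-closure of $\dot{\QQQ}_\beta$ noted above, there is a $\PPP_\beta/G$-name $p(\beta)$ for a lower bound; at $\beta\notin S$ we simply assign the canonical name for the trivial condition. At a limit stage $\beta$, the required condition $p\restriction\beta$ is obtained by gluing together the previously constructed $p\restriction\gamma$ for $\gamma<\beta$: its support is contained in $S\cap\beta$, hence has size less than $\kappa^+$, so it is automatically a condition in $\PPP_\beta/G$, and it extends each $p_\xi\restriction\beta$ by the inductive hypothesis. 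Setting $\beta=\varepsilon$ produces the desired lower bound.

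There is no real obstacle here; the argument is entirely routine once one has the ${<}\kappa$-closure of the iterands in hand. The only point that requires checking is that in the degenerate sub-case where $\PPP_\beta$ fails to be ${<}\kappa^+$-distributive (so that $\dot{\QQQ}_\beta$ is trivial), the construction still goes through, but this is immediate since lower bounds in a trivial poset exist automatically.
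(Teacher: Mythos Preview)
Your coordinate-by-coordinate approach differs from the paper's. The paper instead observes that the descending sequence lies in $\VV$ (by ${<}\kappa$-distributivity of $\PPP_\alpha$), uses the already-established ${<}\kappa$-closure of the full iteration $\PPP$ in $\VV$ to obtain a lower bound there, and then runs a density argument in $\PPP_\alpha$ to find such a lower bound whose restriction to $\alpha$ actually lies in $G$.

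Your argument has a gap at the base case $\beta = \alpha$. In this paper $\PPP/G = \{p \in \PPP : p \restriction \alpha \in G\}$ with the ordering inherited from $\PPP$, so for your constructed $p$ to lie below every $p_\xi$ in $\PPP/G$ you need $p \restriction \alpha$ to be a lower bound of $\seq{p_\xi \restriction \alpha}{\xi<\lambda}$ \emph{that belongs to $G$}. You never address this, and it is not immediate: filters are only finitely directed, so the existence of such a lower bound in $G$ requires a genericity argument (first noting the sequence is in $\VV$, then exhibiting a suitable dense set in $\PPP_\alpha$). This is precisely what the paper's proof supplies and what your sketch omits. A related point: for the resulting $p$ to be a condition in $\PPP \subseteq \VV$, each $p(\beta)$ must be a $\PPP_\beta$-name in $\VV$, not merely a $\PPP_\beta/G$-name in $\VV[G]$ as you write; arranging this also requires first pulling the sequence back to $\VV$.
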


\begin{proof}
 Let $\seq{q_\xi}{\xi<\mu}$ be a descending sequence with $\mu<\kappa$ in $\PPP/G$ in $\VV[G]$. 
  Since  $\PPP/G\subseteq\PPP$ and Lemma \ref{lem_P_Jensen_complete}  shows that $\PPP_\alpha$ is ${<}\kappa$-closed in $\VV$, this sequence is an element of $\VV$. Let $\dot{G}$ denote the canonical $\PPP_\alpha$-name for the generic filter in $\VV$. 
  Fix a condition $p$ in $G$ such that $$p\Vdash_{\PPP_\alpha}\anf{\textit{Every condition in $\dot{G}$ is compatible with $\check{q}_\xi$ in $\check{\PPP}$}}$$ holds in $\VV$ for all $\xi<\mu$. Work in $\VV$. Given $\xi<\mu$, a standard density argument now  shows that $$p\Vdash_{\PPP_\alpha}\anf{\check{q}_\xi\restriction\check{\alpha}\in\dot{G}}$$ and the separativity of $\PPP_\alpha$ allows us to conclude that $p\leq_{\PPP_\alpha}q\restriction\alpha$ holds. 
  
 Fix a condition $r$ below $p$ in $\PPP_\alpha$ and set  $$r_\xi ~ = ~ r^\frown (q_\xi \restriction [\alpha, \varepsilon))$$ for all $\xi<\mu$. Then $\seq{r_\xi}{\xi<\mu}$ is a descending sequence of conditions in $\PPP$, and, by the proof of Lemma \ref{lem_P_Jensen_complete}, this sequence has a lower bound $r_\mu$ in $\PPP$. Then $r_\mu\restriction\alpha\leq_{\PPP_\alpha}r$ and $r_\mu\leq_\PPP q_\xi$ for all $\xi<\mu$. 
    
  By genericity, we can now find a condition $q$ in $\PPP$ with the property that $q\restriction\alpha\in G$ and $q\leq_\PPP q_\xi$ for all $\xi<\mu$. But then we can conclude that $q$ is a condition in $\PPP/G$ in $\VV[G]$ with  $q\leq_{\PPP/G}q_\xi$ for all $\xi<\mu$. 
 \end{proof}

The proof of the following lemma is similar to the proof of {\cite[Claim 3.14]{MR1877015}}, but there are some subtle differences since we do not assume that $\kappa^{{<}\kappa} = \kappa$. Roughly, we replace their use of \emph{$\kappa$-properness} with $\text{IA}_\kappa$-properness (Definition \ref{def_ProperForIA}) and verify that the argument still goes through.

\begin{lemma}\label{lem_TailsProperIA}
  If $\alpha_0< \varepsilon$ and $G$ is $\PPP_{\alpha_0}$-generic over $\VV$, then the tail of the iteration $\PPP/G$ is $\text{IA}_\kappa$-totally proper in $\VV[G]$.
\end{lemma}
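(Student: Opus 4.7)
Working in $\VV[G]$, fix a sufficiently large regular $\theta$, a set $W\in\text{IA}_\kappa$ with $W\prec(\HH{\theta}[G],\in,\PPP/G)$ witnessed by a chain $\vec{N}=\seq{N_\alpha}{\alpha<\kappa}$, and a condition $p_0\in\PPP/G\cap W$. The goal is to produce a $(W,\PPP/G)$-total master condition below $p_0$. The scheme closely parallels the proof of Lemma \ref{lem_P_Jensen_complete}, but replaces the ground-model tools by their tail analogues: Definition \ref{def_ComponentNotation_Tails} and Lemma \ref{lem_KeyBackgroundLemma_Tails}.

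First, I would use that $\PPP/G$ is ${<}\kappa$-closed in $\VV[G]$ (Lemma \ref{lem_TailsLessKappaClosed}) together with $W\in\text{IA}_\kappa$ to apply Lemma \ref{lem_ClosureYieldsGenerics2}\,(1) and obtain a $(W,\PPP/G)$-generic filter $g\subseteq W\cap\PPP/G$ with $p_0\in g$. The candidate is the object $p(g)$ from Definition \ref{def_ComponentNotation_Tails}: if $p(g)$ is a condition in $\PPP/G$ lying below every element of $g$, then, since $g$ is $\subseteq$-maximal on $W\cap\PPP/G$, the upward closure of $p(g)$ inside $W\cap\PPP/G$ coincides with $g$, and so $p(g)$ is a $(W,\PPP/G)$-total master condition below $p_0$ in the sense of Definition \ref{def_GenericAndTotalMaster}\,(3).

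To verify this, I would apply Lemma \ref{lem_KeyBackgroundLemma_Tails} to $(W,g)$. Either alternative \eqref{item_GoodCase_Tails} holds and we are done, or the bad alternative \eqref{item_BadCase_Tails} supplies some $\alpha\in W\cap[\alpha_0,\varepsilon)$ and a condition $q\leq_{\PPP_\alpha/G}p(g)\restriction\alpha$ forcing items \eqref{item_CofIsKappa_Tails}--\eqref{item_NoProperInitial_Tails}. Because $W\in\text{IA}_\kappa$ already yields $\cof{W\cap\kappa^+}=\kappa$, item \eqref{item_CofIsKappa_Tails} is vacuous, and the task reduces to refuting the conjunction of \eqref{item_EveryProperIS_Tails} and \eqref{item_NoProperInitial_Tails}.

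The hard part will be refuting that bad alternative, and this is where the tail argument must depart from Lemma \ref{lem_P_Jensen_complete}, since $G_0$ is already determined inside $\VV[G]$ and cannot be freely modified. The plan is to exploit the defining property of $\dot{S}_\alpha$ from Definition \ref{definition:Iteration}: whenever $\dot{S}_\alpha^G\neq\emptyset$, the set $S^{\kappa^+}_\kappa\setminus\dot{S}_\alpha^G$ contains a stationary set $T\in I[\kappa^+]$, while if $\dot{S}_\alpha^G=\emptyset$ the full set $S^{\kappa^+}_\kappa$ plays this role. Combining this with the approachability structure of $W\cap\kappa^+$ encoded by $\vec{N}$, and with the projective stationarity of $\text{IA}_\kappa$ over $I[\kappa^+]\cap\POT{S^{\kappa^+}_\kappa}$ (Lemma \ref{lem_IA_projectiveOverAppIdeal}), one locates an ordinal $\delta<W\cap\kappa^+$ which simultaneously lies in $T$ and is a closure point of the function $(\bigcup G_0)(W\cap\kappa^+)$; the ensuing relation $(\bigcup G_0)(W\cap\kappa^+)[\delta]\subseteq\delta$ then contradicts the $\dot{S}_\alpha^G$-node requirement imposed by \eqref{item_EveryProperIS_Tails} on the proper initial segment of $(\bigcup G_0)(W\cap\kappa^+)$ of domain $\delta+1$. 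Hence the bad alternative fails and $p(g)$ is the desired total master condition below $p_0$.
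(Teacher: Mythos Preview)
Your overall framework---apply Lemma \ref{lem_KeyBackgroundLemma_Tails} and rule out the bad alternative---is right, but the argument you sketch for refuting the bad alternative does not go through, and the paper's proof proceeds quite differently at the crucial point.

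The gap is in your last paragraph. You propose to find an ordinal $\delta<W\cap\kappa^+$ that lies in the stationary set $T\subseteq S^{\kappa^+}_\kappa\setminus\dot{S}_\alpha^G$ and is simultaneously a closure point of $t=(\bigcup G_0)(W\cap\kappa^+)$. But $T$ is only stationary in $\kappa^+$, while the closure points of $t$ form a club in the ordinal $W\cap\kappa^+$ (which has cofinality $\kappa$). There is no reason $T$ should meet this club; indeed, if the club of closure points of $t$ has order type exactly $\kappa$, it contains \emph{no} points of $\VV$-cofinality $\kappa$ at all, so no $\delta$ of the required kind exists. Your appeal to Lemma \ref{lem_IA_projectiveOverAppIdeal} does not help: that lemma lets you choose $W$ so that $W\cap\kappa^+$ lands in a prescribed stationary set, but here $W$ is fixed at the outset and $\alpha$ (hence $T$) only emerges afterward from the bad alternative.

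The paper's proof handles this by not taking an arbitrary $(W,\PPP/G)$-generic filter. It first sets $t=(\bigcup G_0)(W\cap\kappa^+)$ and splits into two cases. If some proper initial segment of $t$ is \emph{not} in $W$, then any $g$ works, because item \eqref{item_EveryProperIS_Tails} of the bad alternative directly fails. In the hard case where all initial segments of $t$ lie in $W$, the paper builds $g$ by a recursion of length $\kappa$ in which, at each successor step, one passes to a flat condition $p^*_\xi$, reads off a bound $\beta$ on its order, and then augments the $X$-component at every coordinate of the support by the pair $(\beta+1,t\restriction(\beta+1))$. Because these initial segments of $t$ are in $W$, this construction stays inside $W$. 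The resulting filter $g$ has the feature that $\ran{\dot{X}_{g,\alpha}}$ contains proper initial segments of $t$ for every $\alpha$ in the relevant supports, and this contradicts item \eqref{item_NoProperInitial_Tails}, not item \eqref{item_EveryProperIS_Tails}. So the key idea you are missing is that the generic filter must be tailored to $t$, and the contradiction comes from the $X$-side rather than from the $S$-node property.
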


\begin{proof}
 For $\alpha_0 = 0$, the statement of the lemma follows immediately from Lemmas \ref{lem_ClosureYieldsGenerics2} and \ref{lem_P_Jensen_complete}. 
 Therefore, we  from now on assume that $1\leq\alpha_0<\varepsilon$.  

Let $G_{\alpha_0}$ be $\PPP_{\alpha_0}$-generic over $\VV$ and work in $\VV[G_{\alpha_0}]$. Let $\theta$ be a sufficiently large regular cardinal,  let $\lhd$ be a well-ordering of $\HH{\theta}=\HH{\theta}^\VV[G_{\alpha_0}]$, let $$W\prec(\HH{\theta},\in,\alpha_0,\PPP/G_{\alpha_0},\lhd)$$ with $W\in\text{IA}_\kappa$, and let $p_0$ be a condition in $W \cap(\PPP/G_{\alpha_0})$. In the following, we will find a $(W,\PPP/G_{\alpha_0})$-total master condition below $p_0$. Define $$t ~ = ~ (\textstyle{\bigcup G_0})(W \cap \kappa^+),$$ which is well-defined because $W \in \text{IA}_\kappa$ implies that $\cof{W \cap \kappa^+}=\kappa$. 

 \smallskip
 
 \paragraph{\textbf{Case 1:} \emph{There exists $\zeta \in W \cap \kappa^+$ with $t \restriction \zeta \notin W$}.} 
  Since Lemma \ref{lem_TailsLessKappaClosed} implies that $\PPP/G_{\alpha_0}$ is ${<}\kappa$-closed, we can apply Lemma \ref{lem_ClosureYieldsGenerics2} to find a $(W,\PPP/G_{\alpha_0})$-generic filter $g$ that includes $p_0$. 
   Let $p(g)$ be the function defined in Definition \ref{def_ComponentNotation_Tails} and assume, towards a contradiction, that $p(g)$ is not a condition in $\PPP/G_{\alpha_0}$ that is stronger than every element of $g$. 
   Then, by part \eqref{item_EveryProperIS_Tails} of Lemma \ref{lem_KeyBackgroundLemma_Tails}, there is an $\alpha \in W \cap [\alpha_0,\varepsilon)$ and some condition in $\PPP_\alpha/G_{\alpha_0}$ below $p(g)\restriction\alpha$ forcing that every proper initial segment of $(\bigcup G_0)(W \cap \kappa^+)$ is an element of $W$.  But this implies that every proper initial segment of $t$ is an element of $W$, contrary to our case. 
   This allows us to conclude that $p(g)$ is a $(W,\PPP/G_{\alpha_0})$-total master condition below $p_0$. 
    
 \smallskip
 
 \paragraph{\textbf{Case 2:} \emph{If $\zeta\in W\cap\kappa^+$, then $t\restriction\zeta\in W$}.}
  Since $W \in \text{IA}_\kappa$, there is a sequence $$\vec{D} ~ = ~ \seq{D_\xi}{\xi < \kappa}$$  listing all open dense subsets of $\PPP/G_{\alpha_0}$ that are elements of $W$, and such that every proper initial segment of $\vec{D}$ is an element of $W$. 
 Recursively define a descending sequence $\vec{p}=\seq{p_\xi}{\xi < \kappa}$ of conditions in $\PPP/G_{\alpha_0}$ as follows: 
\begin{itemize}
 \item Given $\xi<\kappa$, let $p^*_\xi$ be the $\lhd$-least flat condition in $\PPP/G_{\alpha_0}$ below $p_\xi$ that is an element of $D_\xi$. Such a condition exists by Lemma \ref{lem_P_Jensen_complete} and the open density of $D_\xi$.  
  By Lemma \ref{lem_FlatImpliesBounded}, there exists $\beta < \kappa^+$ with $$p^*_\xi\restriction\alpha\Vdash_{\PPP_\alpha}\anf{o(p^*_\xi(\alpha))\leq\check{\beta}}$$ for all $1\leq\alpha\in\supp{p^*_\xi}$.  
  Given $\alpha\in[\alpha_0,\varepsilon)$, let $\dot{f}_\alpha$ and $\dot{Y}_\alpha$ denote the canonical $\PPP_\alpha/G_{\alpha_0}$-names with the property that $$p^*_\xi(\alpha)^G ~ = ~ (\dot{f}_\alpha^G,\dot{Y}_\alpha^G)$$ holds whenever $G$ is $(\PPP_\alpha/G_{\alpha_0})$-generic over $\VV[G_{\alpha_0}]$ with $p^*_\xi \restriction \alpha\in G$. 
  Note that, by the choice of $\beta$, for each $\alpha\in\supp{p^*_\xi}$, the condition $p^*_\xi \restriction\alpha$ forces that $\beta$ is larger than all domains of elements of the range of $\dot{f}_\alpha$, and larger than all elements in the domain of $\dot{Y}_\alpha$. In particular, if $\alpha_0\leq\alpha\in\supp{p^*_\xi}$ and $G$ is $(\PPP_\alpha/G_{\alpha_0})$-generic over $\VV[G_{\alpha_0}]$ with $p^*_\xi \restriction\alpha\in G$, then 
 \begin{equation}\label{eq_New_ith_coord}
   (\dot{f}_\alpha^G, ~ \dot{Y}_\alpha^G ~ \cup ~ \{(\beta+1, ~ t\restriction(\beta + 1))\})
 \end{equation}
is a condition in $\dot{\QQQ}_\alpha^G$ below $p^*_\xi(\alpha)^G$.\footnote{Recall Remark \ref{remark_ReqOnX} showing that, for successor ordinals, the right coordinate of a condition does not have to agree with $G_0$).}
 This shows that there is a condition $p_{\xi+1}$ below $p^*_\xi$ in $\PPP$ with $\supp{p_{\xi+1}}=\supp{p^*_\xi}$ and the property that whenever $\alpha_0\leq\alpha\in\supp{p_{\xi+1}}$ and $G$ is $(\PPP_\alpha/G_{\alpha_0})$-generic over $\VV[G_{\alpha_0}]$ with $p^*_\xi \restriction\alpha\in G$, then   $p_{\xi+1}(\alpha)^G$ is equal to the condition in \eqref{eq_New_ith_coord}.   
 
 Now, assume that $p_\xi$ is an element of $W$. Then $p^*_\xi$ is obviously definable in $(\HH{\theta}[G_{\alpha_0}], \in,\PPP, G_{\alpha_0},\lhd)$ using the parameters $p_\xi$ and $D_\xi$, which are both contained in $W$. Since $p^*_{\xi} \in W$, then $\beta$ can also be taken to be an element of $W$. Finally, the condition $p_{\xi+1}$ is definable from $p^*_\xi$, $\beta$, and $t \restriction (\beta + 1)$, all of which are elements of $W$   because of the case we are in. These arguments show that $p_\xi\in W$ implies that $p_{\xi + 1}\in W$.

 \item If $\xi < \kappa$ is a limit ordinal, then we define $p_\xi$ be the $\lhd$-least lower bound of the sequence $\seq{p_\zeta}{\zeta<\xi}$ in $\PPP$.\footnote{Such a lower bound exists by Lemma \ref{lem_TailsLessKappaClosed}}   
\end{itemize}

Note that every proper initial segment of $\vec{p}$ is an element of $W$, because $W$ contains all proper initial segments of $t$ and each proper initial segment of $\vec{p}$ is definable in the structure $(\HH{\theta}[G_{\alpha_0}],\in,\lhd,\PPP)$ using the parameter $p_0$ and some  sufficiently long\footnote{The length of this initial segment of $t$ might depend on the given initial segment of $\vec{p}$.} proper initial segment of $t$.   
 Hence, not only is each $p_{\xi+1}$ an element of $D_\xi$, but is in fact an element of $D_\xi \cap W$.  In particular, the set $\Set{p_\xi}{\xi < \kappa}$ generates a $(W,\PPP/G_{\alpha_0})$-generic filter.  Let $g$ denote this filter, and let $p(g)$ be the function defined in Definition \ref{def_ComponentNotation_Tails}.

Now, assume, towards a contradiction, that $p(g)$ is not a condition below every member of $g$.  
 Then by part \eqref{item_NoProperInitial_Tails} of Lemma \ref{lem_KeyBackgroundLemma_Tails}, there is an $\alpha \in W \cap [\alpha_0,\varepsilon)$ and a condition $q$ in $\PPP_\alpha/G_{\alpha_0}$ below $p(g)\restriction\alpha$ with the property that whenever $G$ is $(\PPP_\alpha/G_{\alpha_0})$-generic over $\VV[G_{\alpha_0}]$ with $q\in G$, then no proper initial segment of $t=(\bigcup G_0)(W \cap \kappa^+)$ is an element of $\ran{\dot{X}_{g,\alpha}^G}$. 
 Since $\alpha \in W$ and the set $\Set{p_\xi}{\xi < \kappa}$ generates the $(W,\PPP/G_{\alpha_0})$-generic filter $g$, we can find $\xi_\alpha<\kappa$ with the property that $\alpha \in \supp{p_{\xi_\alpha + 1}}$. Then $q\leq_{}p_{\xi_\alpha +1}\restriction\alpha$. 
Let $G$ be $(\PPP_\alpha/G_{\alpha_0})$-generic over $V[G_{\alpha_0}]$ with $q \in G$.  Work in $V[G_{\alpha_0},G]$. 
 Since $p_{\xi_\alpha + 1} \in g$, we know that $p_{\xi_\alpha + 1}(\alpha)^G \in \dot{c}_{g,\alpha}^G$ and hence $\dot{X}_{g,\alpha}^G$ extends the right coordinate of $p_{\xi_\alpha + 1}(\alpha)^G$. 
  By construction, the range of the right coordinate of $p_{\xi_\alpha + 1}(\alpha)^G$ contains a proper initial segment of $t$, contradicting the properties of $\alpha$ and $q$. 
 
 Again, we can  conclude that $p(g)$ is a $(W,\PPP/G_{\alpha_0})$-total master condition below the condition $p_0$. 
\end{proof}

\begin{corollary}\label{cor_TailsPreserveComplements}
  Let $G$ be $\PPP$-generic over $\VV$, let $\alpha  \in (0,\varepsilon)$, let $G_\alpha$ be the filter on $\PPP_\alpha$ induced by $G$, and let $S=\dot{S}_\alpha^{G_\alpha}$. Then $(S^{\kappa^+}_\kappa \setminus S)^{\VV[G_\alpha]}$ is stationary in $V[G]$.  
\end{corollary}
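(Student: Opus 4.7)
The plan is to split into cases according to the value of $S = \dot{S}_\alpha^{G_\alpha}$, as dictated by Definition \ref{definition:Iteration}. Unpacking that definition, the name $\dot{S}_\alpha$ is built so that $S$ is either empty or equal to $\dot{B}_\alpha^{G_\alpha}$, and in the latter situation the construction explicitly requires that $S^{\kappa^+}_\kappa \setminus S$ contains a stationary set $R$ that belongs to the approachability ideal $I[\kappa^+]^{\VV[G_\alpha]}$.

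For the case $S = \dot{B}_\alpha^{G_\alpha}$, I would invoke Lemma \ref{lem_TailsProperIA}, applied in $\VV[G_\alpha]$, to conclude that the tail $\PPP/G_\alpha$ is $\mathrm{IA}_\kappa$-totally proper, and hence in particular $\mathrm{IA}_\kappa$-proper. Lemma \ref{lem_ProperForIApreservesApproach} then guarantees that forcing with $\PPP/G_\alpha$ preserves the stationarity of $R$ in the passage from $\VV[G_\alpha]$ to $\VV[G]$. Since $R \subseteq (S^{\kappa^+}_\kappa \setminus S)^{\VV[G_\alpha]}$, the latter set is also stationary in $\VV[G]$, which is what we want.

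For the case $S = \emptyset$, the target set is just $(S^{\kappa^+}_\kappa)^{\VV[G_\alpha]}$. Here I would appeal to Corollary \ref{cor_P_distributive}, which says that $\PPP$ is forcing equivalent to a ${<}\kappa^+$-directed closed poset, and so $\PPP$ is ${<}\kappa^+$-distributive over $\VV$. Every ${<}\kappa^+$-sequence of ordinals in $\VV[G]$ is therefore already in $\VV \subseteq \VV[G_\alpha]$, which means the quotient $\PPP/G_\alpha$ is ${<}\kappa^+$-distributive over $\VV[G_\alpha]$. This forces $\cof{\xi}^{\VV[G_\alpha]} = \cof{\xi}^{\VV[G]}$ for every $\xi < \kappa^+$, whence $(S^{\kappa^+}_\kappa)^{\VV[G_\alpha]} = (S^{\kappa^+}_\kappa)^{\VV[G]}$; this set is stationary in $\VV[G]$ by the standard fact that $S^{\kappa^+}_\kappa$ is stationary in any model where $\kappa$ and $\kappa^+$ are regular.

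The only non-routine step is really the bookkeeping step of reading off the dichotomy for $\dot{S}_\alpha$ from Definition \ref{definition:Iteration} and observing that, precisely when $\dot{S}_\alpha^{G_\alpha}$ is non-empty, the definition forces $S^{\kappa^+}_\kappa \setminus \dot{S}_\alpha^{G_\alpha}$ to contain a stationary set from $I[\kappa^+]$. Once this is in hand, both cases reduce to direct applications of lemmas already established in Sections~\ref{section:Prelim} and \ref{sec_MainTechResult}, so no genuinely new obstacle arises here; the substantive work was done in proving Lemma \ref{lem_TailsProperIA}.
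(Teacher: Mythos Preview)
Your proposal is correct and follows essentially the same approach as the paper: split on whether $S=\emptyset$, handle the empty case via the ${<}\kappa^+$-distributivity from Corollary~\ref{cor_P_distributive}, and in the non-empty case combine Lemma~\ref{lem_TailsProperIA} with Lemma~\ref{lem_ProperForIApreservesApproach} applied to the stationary set in $I[\kappa^+]$ guaranteed by Definition~\ref{definition:Iteration}. Your write-up is in fact more explicit than the paper's about why the dichotomy from Definition~\ref{definition:Iteration} yields exactly the needed hypothesis in the non-trivial case.
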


\begin{proof}
 First, if $S=\emptyset$, then the conclusion of the lemma holds trivially, because Corollary \ref{cor_P_distributive} implies that $(S^{\kappa^+}_\kappa)^{\VV[G_\alpha]}=(S^{\kappa^+}_\kappa)^{\VV[G]}$. 
  In the other case, we know that $S^{\kappa^+}_\kappa \setminus S$ contains a stationary set in $I[\kappa^+]$ in $\VV[G_\alpha]$, and hence a combination of Lemma \ref{lem_ProperForIApreservesApproach}, Corollary \ref{cor_P_distributive} and  Lemma \ref{lem_TailsProperIA} ensures that $S^{\kappa^+}_\kappa \setminus S$ remains stationary in $\VV[G]$.   
\end{proof}

\begin{lemma}\label{lem_NoCofBranch}
 If $G$ is $\PPP$-generic over $\VV$, then the tree $\calT(G_0)$ has no cofinal branches in $\VV[G]$.
\end{lemma}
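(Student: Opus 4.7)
\textit{Proof proposal.}  Suppose, toward a contradiction, that some condition $p_0 \in \PPP$ forces a $\PPP$-name $\dot{b}$ to be a cofinal branch through $\calT(\dot{G}_0)$.  Fix a sufficiently large regular $\theta$, a well-ordering $\lhd$ of $\HH{\theta}$, and (via Lemma \ref{lem_IA_is_stat}) an elementary submodel $W \prec (\HH{\theta},\in,\lhd,\PPP,p_0,\dot{b})$ with $W \in \text{IA}_\kappa$ and $\betrag{W}=\kappa \subseteq W$; set $\delta = W \cap \kappa^+$, so that $\cof{\delta}=\kappa$ and $\delta \in S^{\kappa^+}_\kappa$.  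Lemma \ref{lem_ClosureYieldsGenerics2}, together with the ${<}\kappa$-closure of $\PPP$ established in Lemma \ref{lem_P_Jensen_complete}, yields a $(W,\PPP)$-generic filter $g$ that contains $p_0$.  For each $\eta \in \delta$, let $t(\eta)$ be the unique ordinal to which $g$ decides $\dot{b}(\check{\eta})$; by elementarity $t(\eta) \in W \cap \kappa^+ = \delta$, so $t \colon \delta \to \delta$.

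The overall strategy is to manufacture a $\PPP$-condition $q^*$ that simultaneously extends every element of $g$ and whose $0$-th coordinate is $f_0 := (\bigcup g_0) \cup \{(\delta,t)\}$.  Any such $q^*$ forces $\dot{b}\restriction\check{\delta}=\check{t}$ (because it lies below every member of $g$, which decides $\dot{b}(\eta)$ to $t(\eta)$ for each $\eta \in \delta$) and simultaneously forces $(\bigcup\dot{G}_0)(\check{\delta})=\check{t}$ (because $f_0(\delta)=t$), contradicting the fact that $q^*$ also forces $\dot{b} \in \calT(\dot{G}_0)$.  First I would verify that $f_0$ really is a condition in $\QQQ_0$ extending $\bigcup g_0$: since $\dom{\bigcup g_0} \subseteq W \cap S^{\kappa^+}_\kappa \subseteq \delta$, the only non-trivial coherence requirement is that $t \restriction \eta \neq (\bigcup g_0)(\eta)$ for every $\eta \in \dom{\bigcup g_0}$, and this follows because every condition in $g$ forces $\dot{b}\restriction\check{\eta}\neq(\bigcup\dot{G}_0)(\check{\eta})$.

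I would then invoke Lemma \ref{lem_KeyBackgroundLemma_NoTails} with the data $(W,g,f_0)$.  If clause \eqref{item_GoodCase} holds, then $p(f_0,g)$ itself is the desired $q^*$ and the contradiction is immediate.

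The main obstacle is clause \eqref{item_BadCase}, which only gives us an $\alpha \in W \cap [1,\varepsilon)$ such that $p(f_0,g)\restriction\alpha$ is a $\PPP_\alpha$-condition below $g_\alpha$.  To turn this into a full $\PPP$-condition, I would pass to a $\PPP_\alpha$-generic extension $\VV[G_\alpha]$ containing $p(f_0,g)\restriction\alpha$: since this condition lies below every element of $g_\alpha$, every element of $g$ survives as a condition in the quotient $\PPP/G_\alpha$, and the induced filter is $(W[G_\alpha],\PPP/G_\alpha)$-generic.  By Corollary \ref{cor_P_distributive} the poset $\PPP_\alpha$ is forcing-equivalent to a ${<}\kappa^+$-directed closed forcing and hence adds no new $\kappa$-sequences of ordinals, so the original witnessing sequence for $W \in \text{IA}_\kappa$ continues to witness $W[G_\alpha] \in \text{IA}_\kappa^{\VV[G_\alpha]}$.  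Lemma \ref{lem_TailsProperIA} then provides a $(W[G_\alpha],\PPP/G_\alpha)$-total master condition $r$ below the projection of $g$, and combining $p(f_0,g)\restriction\alpha$ with $r$ gives back in $\VV$ a $\PPP$-condition $q^*$ of the required form.  The contradiction then proceeds exactly as in the case \eqref{item_GoodCase} argument above.
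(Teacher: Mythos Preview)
Your setup, the definition of $t$, the verification that $f_0$ is a condition in $\QQQ_0$, and the handling of clause \eqref{item_GoodCase} all match the paper's argument.

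The gap is in your treatment of clause \eqref{item_BadCase}.  You invoke Lemma \ref{lem_TailsProperIA} to obtain a $(W[G_\alpha],\PPP/G_\alpha)$-total master condition below the filter induced by $g$, but $\text{IA}_\kappa$-total properness does not say that \emph{every} $(W[G_\alpha],\PPP/G_\alpha)$-generic filter has a lower bound; it only says that for every single condition $p_0 \in W[G_\alpha]$ there exists \emph{some} generic filter containing $p_0$ that has a lower bound.  (Indeed, the proof of Lemma \ref{lem_TailsProperIA} builds a very particular generic filter --- in Case 2 it carefully threads proper initial segments of $(\bigcup G_0)(W\cap\kappa^+)$ into the second coordinates of the conditions --- precisely to ensure that the resulting $p(g)$ is a condition.)  The filter you already hold, namely the one induced by $g$, has no reason to admit a lower bound in the tail; in fact the failure of clause \eqref{item_GoodCase} for $p(f_0,g)$ is exactly a failure of this kind at stage $\alpha$.

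The paper handles clause \eqref{item_BadCase} along a completely different line and never produces a single condition below all of $g$.  Instead it uses part \eqref{item_EveryProperIS} of Lemma \ref{lem_KeyBackgroundLemma_NoTails}: below some $q \le p(f_0,g)\restriction\alpha$, every proper initial segment of $(\bigcup G_0)(\delta) = t$ is forced to be an $\dot{S}_\alpha$-node.  Now reflect Corollary \ref{cor_TailsPreserveComplements} through the collapse $\sigma\colon H_W \to W$: in $H_W[\bar{g}]$ the set $(S^{\delta}_\kappa \setminus \bar{S})$ is stationary, so there is $\ell < \delta$ of cofinality $\kappa$ with $\ell \notin \bar{S}$ and $\ell$ closed under $\bar{b} = t$.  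Then $t\restriction\ell$ is \emph{not} an $\bar{S}$-node in $H_W[\bar{g}_\alpha]$, and by elementarity of the lift $\hat{\sigma}$ it is not an $\dot{S}_\alpha^G$-node in $\VV[G]$ either --- contradicting what $q$ forces.  This is where Corollary \ref{cor_TailsPreserveComplements} (and the requirement in Definition \ref{definition:Iteration} that $S^{\kappa^+}_\kappa \setminus \dot{S}_\alpha$ contain a stationary set in $I[\kappa^+]$) actually enters the proof.
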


Our proof of this lemma is similar to the proof of {\cite[Claim 3.15]{MR1877015}}, but we must make the following changes: 
\begin{itemize}
 \item Whereas the proof of {\cite[Claim 3.15]{MR1877015}} makes use of ${<}\kappa$-closed elementary submodels of size $\kappa$ (whose existence requires the assumption $\kappa^{{<}\kappa} = \kappa$), we instead use  elementary submodels in $\text{IA}_\kappa$. 
 
 \item   We use Corollary \ref{cor_TailsPreserveComplements} to ensure that the complement of each $\dot{S}_\alpha$ is stationary in the final model (this is used to get the right analogue of statement (8) on page 1691 of \cite{MR1877015}). 
\end{itemize}

\begin{proof}[Proof of Lemma \ref{lem_NoCofBranch}]
 Let $\dot{b}$ be a $\PPP$-name for a function from $\kappa^+$ to $\kappa^+$. Assume, towards a contradiction, that there is a condition $p$ in $\PPP$ that forces $\dot{b}$ to be a cofinal branch through $\calT(\dot{G})$.  
 Fix $W \prec (\HH{\theta},\in,\PPP,\dot{b},p)$ with   $W \in \text{IA}_\kappa$. Set $\delta_W=W\cap\kappa^+$. By the ${<}\kappa$-closure of $\PPP$ and Lemma \ref{lem_ClosureYieldsGenerics2}, there exists a $(W,\PPP)$-generic filter $g$ containing $p$.  
 By the $(W,\PPP)$-genericity of $g$, the ${<}\kappa^+$-distributivity of $\PPP$, and the fact that $\dot{b} \in W$, it follows that for every $\gamma < \delta_W$, some condition $p_\gamma$ in $g$ decides the value of $\dot{b} \restriction \gamma$.  Define 
 \begin{equation}\label{eq_DefOf_t}
  t ~ = ~ \bigcup \Set{s \in {}^{{<}\kappa^+} \kappa^+}{\exists \gamma < \delta_W ~ p_\gamma \Vdash_\PPP\anf{\check{s} = \dot{b} \restriction \check{\gamma}}}.
 \end{equation}
 Then $t$ is a function from $\delta_W$ to $\delta_W$.  Moreover, by the $(W,\PPP)$-genericity of $g$ and the fact that $\dot{b}$ is forced to be a branch through $\dot{\calT}(\dot{G}_0)$, we know that 
 \begin{equation}
  \forall \gamma \in \delta_W \cap S^{\kappa^+}_\kappa ~ \exists p \in g ~ p \Vdash_\PPP\anf{\dot{b} \restriction \gamma \neq (\textstyle{\bigcup \dot{G}_0})(\gamma)}.
 \end{equation}
Hence, if we let $g_0$ denote the 0-th component of the $(W,\PPP)$-generic filter $g$, then we have $t \restriction \gamma \neq  (\bigcup g_0)(\gamma)$ for all $\gamma < \delta_W$.  It follows that $$f_0 ~ = ~  (\textstyle{\bigcup g_0}) ~ \cup ~ \{(\delta_W, t)\}$$ is a condition in $\QQQ_0$. 

Let $p(f_0,g)$ be the function defined in Definition \ref{def_ComponentNotation_NoTails}. Then $p(f_0,g)$ is not  a condition in $\PPP$ that extends every element of $g$, because otherwise it would force that $\dot{b} \restriction \delta_W = t = (\bigcup \dot{G}_0) (\delta_W)$ and hence it would also force that  $\dot{b} \restriction \delta_W \notin \dot{\mathcal{T}}(\dot{G}_0)$, contradicting our assumptions on $p$.

 In this situation, Lemma \ref{lem_KeyBackgroundLemma_NoTails} yields an $\alpha \in W \cap [1,\varepsilon)$ such that $p(f_0,g) \restriction \alpha$ is a condition in $\PPP_\alpha$ below every member of $g_\alpha$ and there is a condition $q$ below $p(f_0,g) \restriction \alpha$ in $\PPP_\alpha$ with the property that whenever $G$ is $\PPP_\alpha$-generic over $\VV$ with $q\in G$, then every proper initial segment of $(\bigcup G_0)(\delta_W)$ is an element of $W$, and is an $\dot{S}_\alpha^G$-node. 
Since the 0-th coordinate of $q$ extends $f_0$, we know that 
\begin{equation}\label{eq_qprimei_forces_dotf_t}
 q \Vdash_{\PPP_\alpha}\anf{(\textstyle{\bigcup\dot{G}_0})(\check{\delta}_W) = \check{t}}. 
\end{equation}

Let $H_W$ denote the transitive collapse of $W$, and let $\map{\sigma}{H_W}{W \prec \HH{\theta}}$ denote the inverse of the transitive collapsing map of $W$.  
 Set $\bar{g}=\sigma^{{-}1}[g]$, $\bar{g}_\alpha=\sigma^{{-}1}[g_\alpha]$, $\bar{\PPP}=\sigma^{{-}1}(\PPP)$ and $\bar{\PPP}_\alpha=\sigma^{{-}1}(\PPP_\alpha)$.

Let $G$ be $\PPP_\alpha$-generic over $\VV$ with $q \in G$.  Since $q$ extends every element of the $(W,\PPP_\alpha)$-generic filter $g_\alpha$, it follows that $G \cap W = g_\alpha$, $\bar{g}_\alpha$ is $\bar{\PPP}_\alpha$-generic over $H_W$, and the map $\sigma$ can be lifted to an elementary $$\map{\hat{\sigma}}{H_W[\bar{g}_\alpha]}{\HH{\theta}[G]}$$ by setting $\hat{\sigma}(\bar{\tau}^{\bar{g}_\alpha})=(\sigma(\bar{\tau}))^G $ for all $\bar{\PPP}_\alpha$-names $\bar{\tau}$ in $H_W$. 
 Moreover, we know that $\bar{g}$ is $\bar{\PPP}$-generic over $H_W$ and the function $$\map{\bar{b}= \sigma^{{-}1}(\dot{b})^{\bar{g}}}{\delta_W}{\delta_W}$$  is an element of $H_W[\bar{g}]$, but not necessarily an element of its inner model $H_W[\bar{g}_\alpha]$.

\begin{claim}\label{clm_t_barb}
$t = \bar{b}$. 
\end{claim}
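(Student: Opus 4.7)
The plan is to use the fact that $t$ is built from the values that $(W,\PPP)$-generic decisions give to $\dot{b} \restriction \gamma$, and that these values must agree with the interpretation $\bar{b}$ of $\sigma^{-1}(\dot{b})$ by the collapsed generic $\bar{g}$, since the collapse map $\sigma$ fixes everything of rank below $\delta_W = \crit{\sigma}$.

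More precisely, fix $\gamma < \delta_W$. By $(W,\PPP)$-genericity and ${<}\kappa^+$-distributivity of $\PPP$ (Corollary \ref{cor_P_distributive}) together with the fact that $\dot b, \gamma \in W$, there is a condition $p_\gamma \in g$ (in particular $p_\gamma \in W$) and some $s_\gamma \in {}^\gamma \gamma$ with
$$p_\gamma \Vdash_\PPP\anf{\check{s}_\gamma = \dot{b} \restriction \check{\gamma}}.$$
The object $s_\gamma$ is definable in $(\HH\theta,\in,\PPP)$ from the parameters $p_\gamma, \dot{b}, \gamma \in W$, so elementarity gives $s_\gamma \in W$. Note that by the definition \eqref{eq_DefOf_t} of $t$, we have $t\restriction\gamma = s_\gamma$.

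Next, applying $\sigma^{-1}$ to the displayed forcing statement yields
$$\sigma^{-1}(p_\gamma) \Vdash_{\bar{\PPP}}\anf{\sigma^{-1}(\check{s}_\gamma) = \sigma^{-1}(\dot{b}) \restriction \sigma^{-1}(\check{\gamma})}.$$
Since $\gamma < \delta_W = \crit{\sigma}$, we have $\sigma^{-1}(\gamma) = \gamma$, and since $s_\gamma$ is a function from $\gamma$ to $\gamma$ its entire transitive closure is bounded by $\gamma$, so $s_\gamma \in \HH{\delta_W}$ and $\sigma^{-1}(s_\gamma) = s_\gamma$. Writing $\bar{p}_\gamma = \sigma^{-1}(p_\gamma) \in \bar{g}$, we therefore obtain
$$\bar{p}_\gamma \Vdash_{\bar{\PPP}}\anf{\check{s}_\gamma = \sigma^{-1}(\dot{b}) \restriction \check{\gamma}},$$
which gives $\bar{b}\restriction\gamma = s_\gamma = t\restriction\gamma$.

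Since this holds for every $\gamma < \delta_W$ and both $t$ and $\bar{b}$ are functions with domain $\delta_W$, we conclude $t = \bar{b}$. The only mildly delicate step is the absoluteness of $s_\gamma$ under the collapse, which is immediate once one observes that $s_\gamma$ is a bounded subset of $\delta_W$ in the obvious coding, hence lies in $\HH{\delta_W}$ and is fixed pointwise by $\sigma^{-1}$.
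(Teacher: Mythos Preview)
Your proof is essentially the same as the paper's: fix $\gamma<\delta_W$, find a condition in $g$ deciding $\dot{b}\restriction\gamma$ to be some $s_\gamma\in W$, apply $\sigma^{-1}$ to the forcing statement, and use that $\sigma$ fixes $\gamma$ and $s_\gamma$ to conclude $\bar{b}\restriction\gamma=s_\gamma=t\restriction\gamma$.

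There is one small imprecision worth correcting. You assert $s_\gamma\in{}^\gamma\gamma$ and then use this to argue that the transitive closure of $s_\gamma$ is bounded by $\gamma$. But $\dot{b}$ is only a name for a function from $\kappa^+$ to $\kappa^+$, so there is no reason the range of $\dot{b}\restriction\gamma$ should lie in $\gamma$. What you actually have (and what suffices) is that $s_\gamma\in W$, which you did prove; since each value $s_\gamma(\xi)$ is definable from $s_\gamma$ and $\xi\in\gamma\subseteq W$, elementarity gives $\ran{s_\gamma}\subseteq W\cap\kappa^+=\delta_W$. Hence $s_\gamma\in{}^\gamma\delta_W$, its transitive closure consists of ordinals below $\delta_W=\crit{\sigma}$, and $\sigma$ fixes $s_\gamma$. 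The paper's proof handles this point in the same spirit, simply noting that $s\in W$ and $\crit{\sigma}=\delta_W$ imply $\sigma$ fixes $s$.
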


\begin{proof}[Proof of Claim \ref{clm_t_barb}] 
 Fix $\gamma < \delta_W$, and set $s= t \restriction \gamma$. By earlier remarks, we know that $s \in W$ and, by the definition of $t$ in \eqref{eq_DefOf_t}, there is $p \in g\subseteq W$ with $p \Vdash_{\PPP} \anf{\dot{b} \restriction \check{\gamma} = \check{s}}$. 
 We now know that $p$, $\dot{b}$, $\gamma$, and $s$ are elements of $W = \ran{\sigma}$, and since $\crit{\sigma} = \delta_W$, it follows that $\sigma$ fixes $\gamma$ and $s$.  The elementarity of $\sigma$ now implies that $$\sigma^{{-}1}(p) \Vdash_{\bar{\PPP}}\anf{\sigma^{{-}1}(\dot{b}) \restriction \check{\gamma} = \check{s}}$$ holds in $H_W$. Moreover, since $p \in g$, we have $\sigma^{{-}1}(p) \in \bar{g}$ and hence $\bar{b} \restriction \gamma = s$ holds in $H_W[\bar{g}]$.
\end{proof}

Let $\bar{S}=\sigma^{{-}1}(\dot{S}_\alpha)^{\bar{g}_\alpha}$. 
 By Corollary \ref{cor_TailsPreserveComplements} and the elementarity of $\map{\sigma}{H_W}{\HH{\theta}}$, we know that $(S^{\delta_W}_\kappa\setminus \bar{S})^{H_W[\bar{g}_\alpha]}$ remains stationary when going from $H_W[\bar{g}_\alpha]$ to $H_W[\bar{g}]$. 
  Since $\bar{b}$ maps from $\delta_W$ to $\delta_W$ and $(S^{\delta_W}_\kappa\setminus \bar{S})^{H_W[\bar{g}_\alpha]}$ is stationary in $H_W[\bar{g}]$, there is an $\ell < \delta_W$ such that the following statements hold in $H_W[\bar{g}]$: 
 \begin{itemize}
  \item $\cof{\ell} = \kappa$. 
 
  \item $\ell$ is closed under $\bar{b}$. 
  
 \item $\ell \notin \bar{S}$. 
\end{itemize}

 Then $b_*= \bar{b} \restriction \ell$ maps from $\ell$ to $\ell$, and, by the ${<}\delta_W$-distributivity of $\bar{\PPP}$ over $H_W$, we know that $b_*$ is an element of $H_W$. 
 Moreover, since $\crit{\hat{\sigma}} = \delta_W$, we have $\hat{\sigma}(b_*) = b_*$.  
 In addition, since $\ell = \dom{b_*} \in (S^{\delta_W}_\kappa\setminus \bar{S})^{H_W[\bar{g}_\alpha]}$ and $\ell$ is closed under $b^*$, we can conclude that $H_W[\bar{g}_\alpha]$ believes that $b_*$ is not a $\bar{S}$-node.\footnote{Recall from Definition \ref{def_PosetAddFunction} that a function $s$ is an $S$-node if no element of $S^{\kappa^+}_\kappa \setminus S$ is closed under $s$} 
Then the elementarity of $\map{\hat{\sigma}}{H_W[\bar{g}_\alpha]}{\HH{\theta}[G]}$ implies that $\hat{\sigma}(b_*) = b_*$ is not a $\dot{S}_\alpha^G$-node in  $\HH{\theta}[G]$. 
  By Claim \ref{clm_t_barb}, we now have $b_* = t \restriction \ell$.  So $t \restriction \ell$ is not a $\dot{S}_\alpha^G$-node, contradicting our earlier arguments. 
\end{proof}

 We are now ready to complete the proof of the main technical result of this paper.

\begin{proof}[Proof of Theorem \ref{thm_PropertiesOfHytRautPoset}]
 Let $\kappa$ be an infinite regular cardinal and let $\PPP=\PPP_\varepsilon$ be the poset constructed  in Definition \ref{definition:Iteration}. Then Lemma \ref{lem_P_Jensen_complete} shows that part \eqref{item_Jensen_complete} of the theorem holds.

 Next, we prove part \eqref{item_ChainCondition} of the theorem.  We will prove that the poset $\PPP$ is \emph{$(2^\kappa)^+$-stationarily layered} (see {\cite[Definition 29]{MR3911105}}) in $\VV$, which, by {\cite[Lemma 4]{MR3911105}}, implies that $\PPP$ is $(2^\kappa)^+$-Knaster.  A poset $\RRR$ is $\lambda$-stationarily layered if for some sufficiently large regular cardinal $\theta$, there are stationarily-many $M \in \POTI{\HH{\theta}}{\lambda}$ such that $M \cap \RRR$ is a regular suborder of $\RRR$. Equivalently, we can demand that every condition $p$ in $\RRR$  has a \emph{reduction} into $M \cap \RRR$, i.e.\ there exists $q \in M \cap \RRR$ such that all extensions of $q$ in $M \cap \RRR$ are compatible with $p$ in $\PPP$.

For all sufficiently large regular cardinals $\theta$, the set 
 $$ R ~ = ~ \Set{M \in \POTI{\HH{\theta}}{(2^\kappa)^+}}{M\prec\HH{\theta}, ~{}^{\kappa} M \subseteq M, ~ \PPP\in M}$$ is stationary in $\Poti{\HH{\theta}}{(2^\kappa)^+}$.  
   We prove that $R$ witnesses the $(2^\kappa)^+$-stationary layeredness of $\PPP$.  Fix $M \in R$ and a condition $p$ in $\PPP$.  By the density of flat conditions in $\PPP$, we we may assume that there exists a sequence $\seq{x_\alpha}{\alpha\in\supp{p}}$  that witnesses that $p$ is flat. 
   Furthermore, we may assume that $$p(\alpha)=\check{x}_\alpha ~ \Longleftrightarrow ~ p(\alpha)\neq\check{\emptyset} ~ \Longleftrightarrow ~ {p\restriction\alpha}\Vdash_{\PPP_\alpha}\anf{\check{x}_\alpha \in \dot{\QQQ}_\alpha}$$ holds for all $\alpha\in\supp{p}$, because redefining $p$ in this way results in a condition equivalent to $p$. 
      Set $s= M \cap \supp{p}$ and define $q= p \restriction s$.

  \begin{claim}\label{claim:ReductionIntoSubmodel}
   The condition $q$ is a reduction of $p$ into $M\cap\PPP$.  
  \end{claim}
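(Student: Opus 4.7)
The plan is to show that every $r \in M\cap\PPP$ with $r\leq_\PPP q$ is compatible with $p$ in $\PPP$, by constructing an explicit common extension $r^\ast\leq_\PPP r,p$.

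Before doing so, I would dispense with the preliminary fact that $q = p\restriction s$ actually lies in $M$. This can be arranged by refining $R$ (without losing stationarity) to also require $\HH{\kappa^+}\subseteq M$. With this refinement, each $x_\alpha$ with $\alpha\in s$ lies in $\HH{\kappa^+}\subseteq M$, and since $s = M\cap\supp{p}$ is a $\kappa$-sized subset of $M$, the closure property ${}^\kappa M\subseteq M$ puts both $s$ and the sequence $\seq{\check x_\alpha}{\alpha\in s}$ inside $M$; hence $q\in M$. The same closure combined with elementarity gives $\supp{r}\subseteq M$ for every $r\in M\cap\PPP$. Combined with the fact that $r\leq_\PPP q$ forces $\supp{r}\supseteq\supp{q}=s$ (otherwise some $\check x_\alpha = \check\emptyset$, contradicting the normalization of $p$), we obtain $\supp{r}\cap\supp{p} = s$.

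Given such $r$, define $r^\ast$ on $\supp{r}\cup\supp{p}$ by
\[
  r^\ast(\alpha) ~ = ~ \begin{cases} r(\alpha) & \text{if } \alpha\in\supp{r}, \\ \check{x}_\alpha & \text{if } \alpha\in\supp{p}\setminus\supp{r}. \end{cases}
\]
The support $\supp{r}\cup\supp{p}$ has cardinality at most $\kappa$, which matches the ${<}\kappa^+$-support requirement of the iteration. I would then prove by induction on $\alpha\leq\varepsilon$ that $r^\ast\restriction\alpha$ is a condition in $\PPP_\alpha$ lying below both $r\restriction\alpha$ and $p\restriction\alpha$, with limit stages handled automatically from the supports.

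The main obstacle is the successor step of this induction. The critical case is $\alpha\in\supp{r}\cap\supp{p} = s$: here $q(\alpha) = p(\alpha) = \check{x}_\alpha$, and $r\leq_\PPP q$ yields $r\restriction\alpha\Vdash_{\PPP_\alpha} r(\alpha)\leq_{\dot{\QQQ}_\alpha}\check{x}_\alpha$; the inductive hypothesis $r^\ast\restriction\alpha\leq_{\PPP_\alpha}r\restriction\alpha$ transfers this compatibility, so $r^\ast(\alpha) = r(\alpha)$ is forced below $p(\alpha)$ and makes $r^\ast\restriction(\alpha+1)$ extend $p\restriction(\alpha+1)$. When $\alpha\in\supp{p}\setminus\supp{r}$, the inductive hypothesis $r^\ast\restriction\alpha\leq_{\PPP_\alpha}p\restriction\alpha$ together with flatness ($p\restriction\alpha\Vdash_{\PPP_\alpha}\check{x}_\alpha\in\dot{\QQQ}_\alpha$) gives $r^\ast\restriction\alpha\Vdash_{\PPP_\alpha}\check{x}_\alpha\in\dot{\QQQ}_\alpha$, so $r^\ast(\alpha) = \check{x}_\alpha$ is legal. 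The case $\alpha\in\supp{r}\setminus\supp{p}$ is immediate from the inductive hypothesis. This produces $r^\ast\leq_\PPP r,p$, completing the proof that $q$ reduces $p$ into $M\cap\PPP$.
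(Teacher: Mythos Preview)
Your argument is correct and follows essentially the same route as the paper: define the coordinatewise amalgamation of $r$ and $p$ and verify by induction on $\alpha\leq\varepsilon$ that its restrictions extend both $r\restriction\alpha$ and $p\restriction\alpha$, using $\supp{r}\subseteq M$ to see that any $\alpha\in\supp{r}\cap\supp{p}$ already lies in $s$.

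Two minor remarks. First, the refinement of $R$ is unnecessary: from ${}^\kappa M\subseteq M$ one gets $\kappa\in M$ (the constant $\kappa$-sequence at any $m\in M$ lies in $M$ and has domain $\kappa$), hence $\HH{\kappa^+}\in M$, and then $\HH{\kappa^+}\subseteq M$ follows from $M\in\POTI{\HH{\theta}}{(2^\kappa)^+}$ since $\lvert\HH{\kappa^+}\rvert=2^\kappa$. Second, your justification for $\supp{r}\supseteq s$ does not quite work (the maximal condition of $\PPP(S,G_0)$ is the pair $(\emptyset,\emptyset)$, not $\emptyset$, so the normalization is not violated), but this inclusion is never used: your three cases exhaust $\supp{r}\cup\supp{p}$ regardless, and in the critical case you only need $\supp{r}\cap\supp{p}\subseteq s$, which follows from $\supp{r}\subseteq M$.
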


 \begin{proof}[Proof of Claim \ref{claim:ReductionIntoSubmodel}]
  First, we verify that $q$ is an element of $M$.   Since we have $x_\alpha \in\HH{\kappa^+} \subseteq M$ for all $\alpha\in\supp{p}$, the closure properties of $M$ imply that  the sequence $\seq{x_\alpha}{\alpha \in s}$ is an element of $M$. Since the condition $q$ is definable from the sequence $\seq{x_\alpha}{\alpha\in s}$, it follows that $q$  is also an element of $M$.

 Next, assume $r$ is a condition in $M\cap\PPP$ below $q$. 
 Let $p \wedge r$ denote the natural amalgamation of $p$ and $r$, i.e.\ we have $(p \wedge r)(\beta)= r(\beta)$ for all $\beta \in \supp{r}$, and $(p \wedge r)(j) = p(\beta)$ for all $\beta \in \supp{p} \setminus \supp{r}$.  Since $p \wedge r$ is clearly a function whose support has size at most $\kappa$, it is a condition in $\PPP$. 
 We verify that $p \wedge r$  is below both $p$ and $r$ in $\PPP$ by checking inductively that $(p \wedge r) \restriction \beta$ is below both $p \restriction\beta$ and $p \restriction\beta$ for all $\beta\leq\varepsilon$.  Suppose this statement holds at all $\alpha < \beta\leq\varepsilon$. 
  Clearly, if $\beta$ is a limit ordinal, then it holds at $\beta$ as well. Hence, we may assume that $\beta =\alpha+1$ and that $(p \wedge r) \restriction \alpha$ lies below both $p \restriction \alpha$ and $r \restriction \alpha$.   
  If $\alpha$ is not in the support of either $p$ or $r$, then the above statement trivially holds at $\beta$ as well. Hence, we have to consider the following  two cases: 
  
  \smallskip
  
 \paragraph{\textbf{Case 1:} $\alpha \in \supp{r}$.} 
  By definition of the condition $p \wedge r$, we have $(p \wedge r)(\alpha) = r(\alpha)$ and $r \leq_\PPP q$ implies that $r \restriction\alpha\Vdash_{\PPP_\alpha}\anf{r(\alpha)\leq_{\dot{\QQQ}_\alpha}q(\alpha)}$.  Since $(p \wedge r) \restriction \alpha \leq_{\PPP_\alpha} r \restriction \alpha$ by our induction hypothesis, this shows that 
 \begin{equation}\label{eq_r_wedge_p}
  (p \wedge r) \restriction \alpha \Vdash_{\PPP_\alpha}\anf{(p \wedge r)(\alpha) \leq_{\dot{\QQQ}_\alpha} q(\alpha)}. 
 \end{equation} 
 Moreover,  since $r \in M$ and $\betrag{\supp{r}} \leq \kappa \subseteq M$, we have $\alpha\in\supp{r} \subseteq M$. In particular,  if $\alpha\in\supp{p}$, then $\alpha \in s$ and hence $p(\alpha) = q(\alpha)$.  In combination with \eqref{eq_r_wedge_p}, this yields 
 \begin{equation}\label{equation_layered_nontrivial_case}
  (p \wedge r) \restriction \alpha \Vdash_{\PPP_\alpha}\anf{(p \wedge r)(\alpha)  \leq_{\dot{\QQQ}_\alpha} p(\alpha)}. 
 \end{equation}
In the other case, if $\alpha \notin \supp{p}$, then \eqref{equation_layered_nontrivial_case} holds  trivially.
  
    \smallskip
  
 \paragraph{\textbf{Case 2:} $\alpha \in \supp{p}\setminus\supp{r}$.} 
 By the definition of $(p \wedge r)$, we have $(p \wedge r)(\alpha) = p(\alpha)$.  Since $\alpha \notin \text{sprt}(r)$, it follows trivially that $$(p\wedge r)\restriction\alpha\Vdash_{\PPP_\alpha}\anf{(p\wedge r)(\alpha)\leq_{\dot{\QQQ}_\alpha}p(\alpha)\leq_{\dot{\QQQ}_\alpha}r(\alpha)}.$$ 
 
 \smallskip
 
 These computations show that $q$ is a reduction of $p$ into $M \cap \PPP$.  
\end{proof}
  
This concludes the proof that the poset is $\PPP$ is $(2^\kappa)^+$-stationarily layered, and hence $(2^\kappa)^+$-Knaster.

 We now verify part \eqref{item_ExistsTreeForApproach} of the theorem. 
Let $G$ be $\PPP$-generic over $\VV$. 
 Suppose $S$ is a bistationary subset of $S^{\kappa^+}_\kappa$ in $\VV[G]$ such that $S^{\kappa^+}_\kappa \setminus S$ contains a stationary set $T$ in $I[\kappa^+]$ in $\VV[G]$.  
 Pick a club $D$ in $\kappa^+$ and a $\kappa^+$-sequence $\vec{z}$ of sets from $[\kappa^+]^{{<}\kappa}$ witnessing that $T$ is an element of $I[\kappa^+]$ in $\VV[G]$. 
 A combination of Lemma \ref{lem_P_Jensen_complete} and part \eqref{item_ChainCondition} of this theorem now shows that there is a subset $P\in\VV$ of $S^{\kappa^+}_\kappa\times 2^\kappa$ and a sequence $\seq{q_{\gamma,\xi}}{(\gamma,\xi)\in P}\in\VV$ of flat conditions in $\PPP$ such that $\dot{B}^G=S$, where $\dot{B}$ is the $\PPP$-name $\Set{(\check{\gamma},q_{\gamma,\xi})}{(\gamma,\xi)\in P}$. 
 Pick an element $s$ of the set $\calN$ defined before Definition \ref{definition:Iteration} such that $\dom{s}=P$ and, if $(\gamma,\xi)\in P$, then $\dom{s(\gamma,\xi)}=\supp{q_{\gamma,\xi}}$ and $q_{\gamma,\xi}(\ell)=\check{x}$ for all $\ell\in\dom{s(\gamma,\xi)}$ with $b(\gamma,\xi)(\ell)=x$. 
 By our assumptions on $\varepsilon$ and $b$, part \eqref{item_ChainCondition} of this theorem allows us to find $0<\alpha<\varepsilon$ with the property that $\dot{B}$ is a $\PPP_\alpha$-name, $b(\alpha)=s$ and $\vec{z},D,T\in\VV[G_\alpha]$, where $G_\alpha$ is the filter on $\PPP_\alpha$ induced by $G$.
   Clearly, the fact that $T$ is bistationary in $\VV[G]$ implies that $T$ is bistationary in $\VV[G_\alpha]$. Moreover, since $\VV[G]$ and $\VV[G_\alpha]$ have the same $\kappa$-sequences of ordinals,  every element of $D \cap T$ is also approachable with respect to $\vec{z}$ in $\VV[G_\alpha]$. 
     Hence, we know that $S^{\kappa^+}_\kappa \setminus S$ contains a stationary set in $I[\kappa^+]$ in $\VV[G_\alpha]$. 
    Since our choice of $\alpha$ ensures that $\dot{B}_\alpha=\dot{B}$, we can conclude that $\dot{S}_\alpha^{G_\alpha}=\dot{B}_\alpha^{G_\alpha}=\dot{B}^G=S$ and hence forcing with $\dot{\QQQ}_\alpha^{G_\alpha}$ over $\VV[G_\alpha]$ adds an order-preserving function from $T(S)$ to $\calT(G_0)$.

 Finally, we  prove part \eqref{item_MaximalSet} of the theorem. 
 Hence, assume that $\kappa^{{<}\kappa} \leq \kappa^+$ holds in $\VV$ and fix an enumeration $\vec{z} =  \seq{z_\xi}{\xi < \kappa^+}$ of all elements of $[\kappa^+]^{{<}\kappa}$ in $\VV$. 
  By Lemma \ref{lem_CardArith_MaximalElement}, the set $M$ of all $\gamma \in S^{\kappa^+}_\kappa$ that are approachable with respect to $\vec{z}$ is a maximal element of $I[\kappa^+] \cap \POT{S^{\kappa^+}_\kappa}$ mod $\NS{}$ in $\VV$.  Since $\PPP$ is ${<}\kappa^+$-distributive and therefore $\vec{z}$ still enumerates all of $[\kappa^+]^{<\kappa}$ in $\VV[G]$, it follows that $M$ is still the set of all $\gamma \in S^{\kappa^+}_\kappa$ that are approachable with respect to $\vec{z}$ in $V[G]$, and hence $M$ is still a maximal element  of $I[\kappa^+] \cap \POT{S^{\kappa^+}_\kappa}$ mod $\NS{}$ in $\VV[G]$. 
   Now, suppose that $S \in\VV[G]$ is bistationary in $S^{\kappa^+}_\kappa$ and $M \setminus S$ is stationary.  Since $M \in I[\kappa^+]$ and $I[\kappa^+]$ is an ideal, it follows that $M \setminus S \in I[\kappa^+]$.  So $M \setminus S$ is a stationary set in $I[\kappa^+]$.   Hence by part \eqref{item_ExistsTreeForApproach} of the theorem,  there is an order-preserving function from $T(S)$  to $\calT(G_0)$ in $V[G]$. 
\end{proof}


\section{Applications}

 We now apply Theorem \ref{thm_PropertiesOfHytRautPoset} to prove the results presented in the introduction of the paper.

\begin{corollary}\label{cor_Def_NS_MaxSet}
 Let $\kappa$ be an infinite regular cardinal satisfying $\kappa^{{<}\kappa} \leq \kappa^+$, let $\PPP$ be the partial order given by Theorem \ref{thm_PropertiesOfHytRautPoset} and let $M$ be a maximum element of $I[\kappa^+] \cap \POT{S^{\kappa^+}_\kappa}$ mod $\NS{}$. If $G$ is $\PPP$-generic over $\VV$, then the set $\NS{}\restriction M$ is $\mathbf{\Delta}_1(\HH{(2^\kappa)^+})$-definable in $\VV[G]$.  
\end{corollary}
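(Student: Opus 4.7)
The plan is to exhibit both a $\Sigma_1$- and a $\Pi_1$-definition of $\NS{}\restriction M$ with parameters in $\HH{(2^\kappa)^+}$. The $\Sigma_1$-definition is the natural one: a set $A$ lies in $\NS{}\restriction M$ if and only if $A\subseteq M$ and there exists a club $C\subseteq\kappa^+$ disjoint from $A$, uniformly in the parameter $(\kappa^+,M)$. For the $\Pi_1$-definition, it is equivalent (and more convenient) to give a $\Sigma_1$-definition of the complement, i.e.\ of the collection of stationary subsets of $M$, using an additional parameter: the subtree $T$ of ${}^{{<}\kappa^+}\kappa^+$ produced by Theorem \ref{thm_PropertiesOfHytRautPoset}. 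I would begin by verifying that $T$ is indeed an element of $\HH{(2^\kappa)^+}$ in $\VV[G]$. This follows because the ${<}\kappa^+$-distributivity of $\PPP$ (Corollary \ref{cor_P_distributive}) preserves $2^\kappa$, so $\betrag{{}^{{<}\kappa^+}\kappa^+}=2^\kappa$ in $\VV[G]$, and consequently $\betrag{T}\leq 2^\kappa$.

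The heart of the argument will be the following equivalence: for $A\subseteq M$, the set $A$ is stationary in $\kappa^+$ if and only if either $(\mathrm{a})$ there exists a club $D\subseteq\kappa^+$ with $M\cap D\subseteq A$, or $(\mathrm{b})$ there exists an order-preserving function from the tree $T(S^{\kappa^+}_\kappa\setminus A)$ to $T$. Both disjuncts are visibly $\Sigma_1$ with parameters $\kappa^+$, $M$, and $T$. The backward direction is the easy half: $(\mathrm{a})$ gives $A\supseteq M\cap D$, which is stationary; while $(\mathrm{b})$ combined with the absence of cofinal branches in $T$ yields that $T(S^{\kappa^+}_\kappa\setminus A)$ has no cofinal branch, so $S^{\kappa^+}_\kappa\setminus A$ contains no $\kappa$-club, and since $C\cap S^{\kappa^+}_\kappa$ is a $\kappa$-club for every club $C\subseteq\kappa^+$, this forces $A$ to meet every club and hence to be stationary.

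For the forward direction I would split into two cases according to whether $M\setminus A$ is stationary. If $M\setminus A$ is non-stationary, any club $D\subseteq\kappa^+$ disjoint from $M\setminus A$ satisfies $M\cap D\subseteq A$, witnessing $(\mathrm{a})$. Otherwise, the plan is to invoke Theorem \ref{thm_PropertiesOfHytRautPoset}\,\eqref{item_MaximalSet} for $S=S^{\kappa^+}_\kappa\setminus A$: using $A\subseteq M\subseteq S^{\kappa^+}_\kappa$, one computes $M\setminus S=A$ and $S^{\kappa^+}_\kappa\setminus S=A$, both stationary, and $S\supseteq M\setminus A$ is stationary as well; so $S$ is bistationary in $S^{\kappa^+}_\kappa$ with $M\setminus S$ stationary, and the cited theorem produces the desired order-preserving function from $T(S)$ to $T$ needed for $(\mathrm{b})$. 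The principal content is thus the application of the tree $T$ from Theorem \ref{thm_PropertiesOfHytRautPoset}; the main subtlety --- and the reason the auxiliary disjunct $(\mathrm{a})$ is needed --- is that the embedding statement in that theorem requires $M\setminus S$ to be stationary, which can fail precisely when $A$ is \emph{almost all} of $M$.
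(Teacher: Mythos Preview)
Your proposal is correct and follows essentially the same approach as the paper: both define the stationary subsets of $M$ by the $\Sigma_1$ disjunction ``either some club $D$ satisfies $M\cap D\subseteq A$, or there is an order-preserving map from $T(S^{\kappa^+}_\kappa\setminus A)$ into $T$,'' and verify the equivalence using Theorem~\ref{thm_PropertiesOfHytRautPoset}\eqref{item_MaximalSet} for the forward direction and the branchlessness of $T$ for the backward direction. Your case split on whether $M\setminus A$ is stationary is just a reorganization of the paper's phrasing (``there is no club $C$ with $C\cap M\subseteq A$''), and your explicit verification that $T\in\HH{(2^\kappa)^+}$ is a small elaboration of what the paper leaves implicit.
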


\begin{proof}
 Work in $\VV[G]$ and let $T$ be the subtree of ${}^{{<}\kappa^+}\kappa^+$ given by Theorem \ref{thm_PropertiesOfHytRautPoset}. Then $T\subseteq{}^{{<}\kappa^+}\kappa^+\in\HH{(2^\kappa)^+}$. Define $\calS$ to be the collection of all subsets $A$ of $M$ such that either there exists a closed unbounded subset $C$ of $\kappa^+$ with $C\cap M\subseteq A$ or there exists an order-preserving function from the tree $T(S^{\kappa^+}_\kappa\setminus A)$ into the tree $T$. 
  Then the set $\calS$ is definable by a $\Sigma_1$-formula with parameters $M$, $T$ and ${}^{{<}\kappa^+}\kappa^+$. 
 
 \begin{claim}\label{claim_definability_from_technial_result}
   The set $\calS$ is equal to the collection of all subsets of $M$ that are stationary in $\kappa^+$. 
 \end{claim}
 
 \begin{proof}[Proof of Claim \ref{claim_definability_from_technial_result}]
  First, let $A\subseteq M$ be stationary in $\kappa^+$ with the property that there is no club $C$ in $\kappa^+$ with $C\cap M\subseteq A$. Since $M$ is stationary in $\kappa^+$, this shows that $A$ is bistationary in $S^{\kappa^+}_\kappa$, $M\setminus A$ is stationary, and hence Theorem \ref{thm_PropertiesOfHytRautPoset} yields an order-preserving function from $T(S^{\kappa^+}_\kappa\setminus A)$ into $T$ that witnesses that $A$ is contained in $\calS$. This argument shows that $\calS$ contains all stationary subsets of $M$. 
  
 Now, assume, towards a contradiction, that there is a non-stationary subset $A$ of $\kappa^+$ that is contained in $\calS$. Then there is an order-preserving embedding of $T(S^{\kappa^+}_\kappa\setminus A)$ into $T$ and a closed unbounded subset $C$ of $\kappa^+$ with $A\cap C=\emptyset$. But then $C\cap S^{\kappa^+}_\kappa$ is a $\kappa$-club that is a subset of $S^{\kappa^+}_\kappa\setminus A$ and, by earlier remarks, the tree $T(S^{\kappa^+}_\kappa\setminus A)$ contains a cofinal branch. But then the tree $T$ also contains a cofinal branch, a contradiction. 
 \end{proof}
 
 By the above claim, the set $\NS{}\restriction M=\POT{M}\setminus\calS$ is definable by a $\Pi_1$-formula with parameters in $\HH{(2^\kappa)^+}$. 
\end{proof}

In particular, the above corollary directly shows how the definability results of \cite{MR1877015} and \cite{MR1222536} can be derived from Theorem \ref{thm_PropertiesOfHytRautPoset}.

 \begin{corollary}
  Let $\kappa$ be an infinite regular cardinal satisfying $\kappa^{{<}\kappa} = \kappa$ and  let $\PPP$ be the poset given by Theorem \ref{thm_PropertiesOfHytRautPoset}. If $G$ is $\PPP$-generic over $\VV$, then $\NS{} \restriction S^{\kappa^+}_\kappa$ is $\mathbf{\Delta}_1(\HH{(2^\kappa)^+})$-definable in $\VV[G]$. 
 \end{corollary}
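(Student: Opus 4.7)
The plan is to apply the preceding Corollary \ref{cor_Def_NS_MaxSet} with the specific choice $M = S^{\kappa^+}_\kappa$. The hypothesis $\kappa^{<\kappa} \leq \kappa^+$ of the corollary is immediate from the stronger assumption $\kappa^{<\kappa} = \kappa$, so the only thing that needs verification is that $S^{\kappa^+}_\kappa$ itself qualifies as a maximum element of $I[\kappa^+] \cap \POT{S^{\kappa^+}_\kappa}$ mod $\NS{}$ in the ground model $\VV$.

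To see this, note that under the hypothesis $\kappa^{<\kappa} = \kappa$, part (4) of Lemma \ref{lem_CardArith_MaximalElement} yields $S^{\kappa^+}_\kappa \in I[\kappa^+]$ in $\VV$. Since every subset $A \subseteq S^{\kappa^+}_\kappa$ trivially satisfies $A \setminus S^{\kappa^+}_\kappa = \emptyset \in \NS{\kappa^+}$, the set $S^{\kappa^+}_\kappa$ is literally the top element of $I[\kappa^+] \cap \POT{S^{\kappa^+}_\kappa}$ and hence, a fortiori, a maximum modulo the non-stationary ideal. With $M = S^{\kappa^+}_\kappa$ thus satisfying the hypotheses of Corollary \ref{cor_Def_NS_MaxSet}, that corollary immediately yields the $\mathbf{\Delta}_1(\HH{(2^\kappa)^+})$-definability of $\NS{} \restriction M = \NS{} \restriction S^{\kappa^+}_\kappa$ in $\VV[G]$, which is the desired conclusion.

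There is no real obstacle here, since all the substantive work — the construction of $\PPP$, its preservation properties, and the explicit $\Sigma_1$-formula that captures stationarity of subsets of $M$ via the existence of an order-preserving map from $T(S^{\kappa^+}_\kappa \setminus A)$ into the tree $T$ produced by Theorem \ref{thm_PropertiesOfHytRautPoset} — has already been carried out. The present statement is simply the special case of Corollary \ref{cor_Def_NS_MaxSet} in which the maximal set provided by the approachability machinery coincides, up to a non-stationary set, with all of $S^{\kappa^+}_\kappa$.
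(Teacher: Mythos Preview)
Your proposal is correct and follows essentially the same route as the paper: both invoke part (4) of Lemma \ref{lem_CardArith_MaximalElement} to see that $S^{\kappa^+}_\kappa$ itself is a maximum element of $I[\kappa^+]\cap\POT{S^{\kappa^+}_\kappa}$ mod $\NS{}$, and then apply Corollary \ref{cor_Def_NS_MaxSet}. The paper adds one sentence you leave implicit, namely that forcing with $\PPP$ does not change cofinalities below $\kappa^+$, so that $(S^{\kappa^+}_\kappa)^{\VV}=(S^{\kappa^+}_\kappa)^{\VV[G]}$ and the set $M$ from Corollary \ref{cor_Def_NS_MaxSet} really coincides with the $S^{\kappa^+}_\kappa$ computed in $\VV[G]$.
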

 
 \begin{proof}
 By Lemma \ref{lem_CardArith_MaximalElement}, if $\kappa^{{<}\kappa} = \kappa$ holds in $\VV$, then $S^{\kappa^+}_\kappa$ is a  maximum element of $I[\kappa^+] \cap \POT{S^{\kappa^+}_\kappa}$ mod $\NS{}$. Since forcing with $\PPP$ does not change cofinalities below $\kappa^+$, the desired conclusion directly follows from Corollary \ref{cor_Def_NS_MaxSet}.   
\end{proof}

 The following lemma establishes a connection between principles of stationary reflection and the $\Pi_1$-definability of restrictions of the non-stationary ideals that will be crucial for proofs of our main results.

\begin{lemma}\label{lemma:DefFromRefl}
 Let $S$ be a stationary subset of an uncountable regular cardinal $\delta$ and let $\calE$ be a set of stationary subsets of $S^\delta_{{>}\omega}$ with the property that for every stationary subset $A$ of $S$, there exists $E\in \calE$ such that $A$ reflects at every element of $E$. 
 If $\calE$ is definable by a $\Sigma_1$-formula with parameter $p$, then the set $\NS{}\restriction S$ is  definable by a $\Pi_1$-formula with parameters $p$, $S$ and $\HH{\delta}$. 
\end{lemma}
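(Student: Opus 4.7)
The plan is to prove the equivalence
\[
A \in (\NS{}\restriction S) \quad\Longleftrightarrow\quad A \subseteq S \;\wedge\; \forall E\bigl(E \in \calE \to \exists \alpha \in E\; \exists C\in\HH{\delta}\; (C \text{ is club in }\alpha \wedge C\cap A = \emptyset)\bigr),
\]
and then verify that the right-hand side is a $\Pi_1$-formula in the parameters $p$, $S$, and $\HH{\delta}$. Equivalently, I will show that a subset $A$ of $S$ is stationary in $\delta$ if and only if there exists $E\in\calE$ such that $A\cap\alpha$ is stationary in $\alpha$ for every $\alpha\in E$.

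The forward direction of this equivalence is exactly the hypothesis of the lemma. For the reverse direction, I will argue by contrapositive: if $A\subseteq S$ is non-stationary, fix a club $D\subseteq\delta$ with $D\cap A = \emptyset$, let $D'$ be the club of limit points of $D$, and given any $E\in\calE$, use the fact that $E$ is stationary in $\delta$ to pick some $\alpha \in E\cap D'$. Since $E \subseteq S^\delta_{{>}\omega}$, this $\alpha$ has cofinality ${>}\omega$; since $\alpha\in D'$, the set $D\cap\alpha$ is club in $\alpha$; and since $D\cap A = \emptyset$, this club witnesses that $A\cap\alpha$ is non-stationary. Hence $A$ fails to reflect at some point of every $E\in\calE$, proving the reverse direction.

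To finish, I will analyse the complexity of the right-hand side formula. Writing $\varphi(E,p)$ for a $\Sigma_1$-formula defining $\calE$, the inner existential assertion $\exists \alpha \in E\; \exists C\in\HH{\delta}\; (C \text{ club in }\alpha \wedge C\cap A=\emptyset)$ is $\Sigma_0$ in the parameters $E$, $A$ and $\HH{\delta}$, since $\alpha$ ranges over the set $E$, any such $C$ is a bounded subset of $\delta$ and therefore lies in the transitive parameter $\HH{\delta}$, and the statements \anf{$C$ is club in $\alpha$} and \anf{$C\cap A=\emptyset$} are bounded. Consequently, the implication $\varphi(E,p)\to(\Sigma_0\text{-formula})$ is equivalent to $\neg\varphi(E,p)\vee(\Sigma_0)$, which is $\Pi_1$; the universal quantifier $\forall E$ preserves $\Pi_1$; and conjoining the $\Delta_0$ statement $A\subseteq S$ keeps the whole formula $\Pi_1$ in the parameters $p$, $S$, and $\HH{\delta}$.

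The only mildly delicate point is the last one, namely ensuring that the $\Sigma_1$-premise $\varphi(E,p)$ appears only on the left-hand side of an implication governed by a universal $\forall E$, so that its complexity combines correctly with the bounded conclusion. This is exactly why the argument is carried out in the contrapositive form \anf{for every $E\in\calE$, $A$ fails to reflect somewhere in $E$}, rather than the positive form \anf{there is $E\in\calE$ at which $A$ everywhere reflects}, which would require a $\Sigma_1$-definable witness $E$ and hence lead to a $\Sigma_2$-definition of the non-stationary ideal instead.
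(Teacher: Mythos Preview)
Your proof is correct and follows essentially the same approach as the paper: both arguments establish that a subset $A$ of $S$ is stationary in $\delta$ if and only if there exists $E\in\calE$ such that $A\cap\alpha$ is stationary in $\alpha$ for every $\alpha\in E$, and then observe that this characterization is $\Sigma_1$ in the given parameters (so its negation, together with $A\subseteq S$, is $\Pi_1$). Your contrapositive argument for the reverse direction, using $\alpha\in E\cap D'$ to exhibit the club $D\cap\alpha$ in $\alpha$ disjoint from $A$, is exactly the paper's argument read in the other direction, and your explicit complexity analysis spells out what the paper leaves implicit.
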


\begin{proof}
 Let $\calS$ denote the collection of all subsets $A$ of $S$ with the property that there exists $E\in\calE$ such that $A\cap\alpha$ is stationary in $\alpha$ for all $\alpha\in E$. By our assumptions on $\calE$, the set $\calS$ is definable by a $\Sigma_1$-formula with parameters $p$, $S$ and $\HH{\delta}$.  
  If $A\subseteq S$ is stationary in $\delta$, then our assumptions on $\calE$ ensure that $A$ is contained in $\calS$. 
  In the other direction, if $E\in\calE$ witnesses that $A$ is an element of $\calS$ and $C$ is closed unbounded in $\delta$, then there is  $\alpha\in E\cap\Lim(C)$ with $A\cap\alpha$ stationary in $\alpha$ and hence $\emptyset\neq A\cap C\cap\alpha\subseteq A\cap C$. Together, this shows that $\calS$ is equal to the collection of all subsets of $S$ that are stationary in $\delta$ and hence $\NS{}\restriction S=\POT{\delta}\setminus\calS$ is definable by a $\Pi_1$-formula with parameters $p$, $S$ and $\HH{\delta}$.   
\end{proof}

 The above lemma directly shows that strong forms of stationary reflection cause restrictions of non-stationary ideals to be $\mathbf{\Delta}_1$-definable.

\begin{corollary}
 Let $\delta$ be an uncountable regular cardinal, let $E$ be a stationary subset of $S^\delta_{{>}\omega}$ and let $S$ be a stationary subset of $\delta$ such that every stationary subset of $S$ reflects almost everywhere in $E$ (i.e. for every stationary subset $A$ of $S$, there is a closed unbounded subset $C$ of $\delta$ with the property that $A$ reflects at every element of $C\cap E$).   Then the set $\NS{}\restriction S$ is definable by a $\Pi_1$-formula with parameters $E$, $S$ and $\HH{\delta}$. 
\end{corollary}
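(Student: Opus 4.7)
The plan is to deduce the corollary directly from Lemma \ref{lemma:DefFromRefl} by producing a suitable $\Sigma_1$-definable family $\calE$ from the given reflection hypothesis. The natural candidate is
\[
 \calE ~ = ~ \Set{E \cap C}{\textit{$C$ is a closed unbounded subset of $\delta$}}.
\]
I would first observe that, since $E$ is a stationary subset of $S^\delta_{{>}\omega}$ and intersections of $E$ with clubs remain stationary and still sit inside $S^\delta_{{>}\omega}$, every element of $\calE$ is a stationary subset of $S^\delta_{{>}\omega}$, as required by the hypothesis of Lemma \ref{lemma:DefFromRefl}.

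Next I would verify that $\calE$ is $\Sigma_1$-definable in the parameters $E$ and $\HH{\delta}$: a set $X$ belongs to $\calE$ if and only if there exists $C \in \HH{(2^\delta)^+}$ (equivalently, a subset of $\delta$ witnessed by a bounded sequence of approximations inside $\HH{\delta}$) such that $C$ is closed unbounded in $\delta$ and $X = E \cap C$. Being closed unbounded in $\delta$ is absolute between transitive models containing $\delta$, and so this condition is expressible by a $\Sigma_1$-formula with parameters $E$ and $\HH{\delta}$.

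I would then feed the reflection hypothesis into the criterion of Lemma \ref{lemma:DefFromRefl}: given any stationary $A \subseteq S$, the hypothesis provides a club $C \subseteq \delta$ such that $A$ reflects at every element of $E \cap C$. Setting $E' = E \cap C \in \calE$, we see that $A$ reflects at every element of $E'$, which is precisely the property demanded by the lemma. Applying Lemma \ref{lemma:DefFromRefl} with $p$ being the pair $(E, \HH{\delta})$ (or, more formally, absorbing $\HH{\delta}$ into the list of allowed parameters) then yields that $\NS{} \restriction S$ is $\Pi_1$-definable from $E$, $S$, and $\HH{\delta}$.

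There is no real obstacle here beyond bookkeeping of parameters; the content of the argument is entirely carried by Lemma \ref{lemma:DefFromRefl}, and the only thing to check is that the family $\calE$ defined above is genuinely $\Sigma_1$ in the listed parameters and that the reflection hypothesis translates into the hypothesis of that lemma, both of which are immediate.
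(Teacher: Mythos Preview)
Your proposal is correct and follows essentially the same approach as the paper: define $\calE = \Set{E \cap C}{C \text{ club in } \delta}$, observe this is $\Sigma_1$-definable from the given parameters, and apply Lemma \ref{lemma:DefFromRefl}. The paper's proof is more terse (it does not spell out why each element of $\calE$ is stationary in $S^\delta_{{>}\omega}$ or why the reflection hypothesis matches the lemma), but the content is identical.
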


\begin{proof}
 If we define $\calE=\Set{C\cap E}{\textit{$C$ club in $\delta$}}$, then $\calE$ is definable by a $\Sigma_1$-formula with parameter $E$ and this shows that the sets $\calE$ and $S$ satisfy the assumptions of Lemma \ref{lemma:DefFromRefl}.  
\end{proof}

 Note that a classical result of Magidor in \cite{MR683153} shows that, starting with a weakly compact cardinal, it is possible to construct a model of set theory in which every stationary subset of $S^2_0$ reflects almost everywhere in $S^2_1$. The above corollary shows that the set $\NS{}\restriction S^2_0$ is $\mathbf{\Delta}_1(\HH{\omega_3})$-definable in Magidor's model.

The next theorem will be used to derive Theorem \ref{MainTheorem1}.

  \begin{theorem}\label{AltMainTheorem1}
  Assume that 
   $2^{\omega_1}=\omega_2$, and $\theta$ is a cardinal with $\theta^{\omega_2}=\theta$. 
  Then there exists a ${<}\omega_2$-directed closed, cardinal-preserving poset $\PPP$ with the property that the following statements hold in $\VV[G]$ whenever $G$ is $\PPP$-generic over $\VV$: 
 \begin{enumerate}
  \item $2^{\omega_2}=\theta$. 

  \item\label{item:AltTheorem1item2} If for every stationary subset $A$ of $S^2_0$, there is a stationary subset $R$ of $\text{IA}_{\omega_1}$ such that $W \prec H(\omega_3)$ and $A$ reflects at $W \cap \omega_2$ for all $W \in R$, then the set $\NS{}\restriction S^2_0$ is $\mathbf{\Delta}_1(\HH{\omega_3})$-definable. 
 \end{enumerate}
 \end{theorem}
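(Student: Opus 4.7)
The plan is to invoke Theorem \ref{thm_PropertiesOfHytRautPoset} at $\kappa=\omega_1$, choosing the iteration-length parameter to be $\varepsilon = \theta$. The cardinal-arithmetic hypothesis $\kappa^{<\kappa}\leq\kappa^+$ of its part \eqref{item_MaximalSet} holds because $\omega_1^{<\omega_1} \leq 2^\omega \leq 2^{\omega_1} = \omega_2$, and the iteration's own assumption $\varepsilon^{2^\kappa}=\varepsilon$ is exactly $\theta^{\omega_2}=\theta$. Let $\PPP$ be the resulting poset. Combining part \eqref{item_Jensen_complete} of that theorem with Lemma \ref{lem_GeneralizeJensenComplete2} shows that $\PPP$ is forcing equivalent to a ${<}\omega_2$-directed closed poset, while part \eqref{item_ChainCondition} provides the $\omega_3$-chain condition, jointly yielding cardinal preservation. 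A standard nice-names count using $|\PPP|=\theta$, the $\omega_3$-cc, and $\theta^{\omega_2}=\theta$ then yields $2^{\omega_2}=\theta$ in $V[G]$.

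Now fix, working in $V$, a maximum element $M$ of $I[\omega_2]\cap\POT{S^2_1}$ mod $\NS{}$ together with the enumeration $\vec z$ of $[\omega_2]^{<\omega_1}$ witnessing this via Lemma \ref{lem_CardArith_MaximalElement}. Part \eqref{item_MaximalSet} of Theorem \ref{thm_PropertiesOfHytRautPoset} guarantees that $M$ remains a maximum of $I[\omega_2]\cap\POT{S^2_1}$ mod $\NS{}$ in $V[G]$, and Corollary \ref{cor_Def_NS_MaxSet} gives that $\NS{}\restriction M$ is $\mathbf{\Delta}_1(\HH{\omega_3})$-definable in $V[G]$. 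Consequently the family
$$\calE ~ = ~ \Set{E \subseteq M}{\textit{$E$ is stationary in }\omega_2}$$
is $\mathbf{\Sigma}_1(\HH{\omega_3})$-definable in $V[G]$, and every member of $\calE$ is a stationary subset of $M\subseteq S^2_1\subseteq S^{\omega_2}_{{>}\omega}$.

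I then apply Lemma \ref{lemma:DefFromRefl} to $S=S^2_0$, $\delta=\omega_2$, and this $\calE$. Assuming the hypothesis of \eqref{item:AltTheorem1item2}, fix a stationary $A\subseteq S^2_0$ in $V[G]$ and let $R\subseteq\text{IA}_{\omega_1}$ be the stationary set of $W\prec\HH{\omega_3}$ provided by that hypothesis. Since $\Set{W\in\text{IA}_{\omega_1}}{\vec z\in W}$ is a club, we may shrink $R$ and assume $\vec z\in W$ for every $W\in R$. Given $W\in R$, any witness of $W\in\text{IA}_{\omega_1}$ produces a cofinal sequence in $W\cap\omega_2$ of order-type $\omega_1$ whose proper initial segments lie in $W$, and, using $\vec z\in W$, each such initial segment appears as some $z_\alpha$ with $\alpha \in W\cap\omega_2$; this is exactly the argument in the proof of Lemma \ref{lem_CardArith_MaximalElement}(1), and yields $W\cap\omega_2\in M$. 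Setting $E=\Set{W\cap\omega_2}{W\in R}$, we conclude that $E\subseteq M$, that $E$ is stationary in $\omega_2$ (because projections of stationary subsets of $\POTI{\HH{\omega_3}}{\omega_2}$ are stationary in $\omega_2$), and that $A$ reflects at every $\alpha\in E$ by choice of $R$. Hence $E\in\calE$, and Lemma \ref{lemma:DefFromRefl} gives the $\mathbf{\Pi}_1(\HH{\omega_3})$-definability of $\NS{}\restriction S^2_0$; combined with its trivial $\mathbf{\Sigma}_1(\HH{\omega_3})$-definability, this yields the desired $\mathbf{\Delta}_1(\HH{\omega_3})$-definability.

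The main conceptual obstacle has been absorbed by Theorem \ref{thm_PropertiesOfHytRautPoset}; what remains above is essentially bookkeeping. The only mildly delicate point is arranging that the projected reflection set $E$ lands inside $M$, which one achieves by requiring $\vec z \in W$ for the submodels $W\in R$ — a restriction that costs no stationarity.
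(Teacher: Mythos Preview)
Your proof is correct and follows the same overall strategy as the paper, with one streamlining: the paper first forces with $\Add{\omega_2}{\theta}$ and then applies Theorem~\ref{thm_PropertiesOfHytRautPoset} in the intermediate extension, whereas you apply that theorem directly in $\VV$. This works, but your nice-names argument as stated only gives the upper bound $2^{\omega_2}\leq\theta$; you still owe the lower bound. That is easy to supply --- each stage $\alpha\in[1,\theta)$ of the iteration adds, via the generic $X_K$ of Lemma~\ref{lem_DensityLemma}, a new total function on $\omega_2$ not present in $\VV[G_\alpha]$, so the $\theta$ stages produce $\theta$ pairwise distinct subsets of $\omega_2$ --- but it should be said. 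The paper sidesteps this by having the preliminary $\Add{\omega_2}{\theta}$ force $2^{\omega_2}=\theta$ outright, after which the Hyttinen--Rautila iteration, being of size $\theta$ with the $\omega_3$-cc, merely preserves it.

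Your handling of the reflection set $E$ also differs slightly: the paper lets $E_0=\{W\cap\omega_2:W\in R\}$, observes $E_0$ is stationary and lies in $I[\omega_2]$, and then invokes maximality of $M$ to conclude $E=E_0\cap M$ is stationary; you instead fix upfront the specific enumeration $\vec{z}$ used to define $M$, shrink $R$ to models containing $\vec{z}$, and argue (as in the proof of Lemma~\ref{lem_CardArith_MaximalElement}(1)) that the projection already lands inside $M$. Both routes are fine; yours is marginally more direct.
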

 
 \begin{proof}
  Let $G$ be $\Add{\omega_2}{\theta}$-generic over $\VV$.  Since $2^{\omega_1}=\omega_2$ holds in $\VV$, we know that $\Add{\omega_2}{\theta}$ satisfies the $\omega_3$-chain condition in $\VV$ and hence all cofinalities are preserved in $\VV[G]$.  
  Work in $\VV[G]$. Then our assumptions ensure that $2^{\omega_1}=\omega_2$ and $2^{\omega_2}=\theta=\theta^{\omega_2}$. 
   Let $\PPP$ be the poset given by Theorem \ref{thm_PropertiesOfHytRautPoset} for $\kappa=\omega_1$ and $\varepsilon=\theta$, and let $M$ be a maximum element of $I[\omega_2] \cap \POT{S^{\omega_2}_{\omega_1}}$ mod $\NS{}$, which exists due to the assumption that $2^\omega \le \omega_2$ (see Lemma \ref{lem_CardArith_MaximalElement}). 
  Then Lemma \ref{lem_GeneralizeJensenComplete2} and part \eqref{item_Jensen_complete} of Theorem \ref{thm_PropertiesOfHytRautPoset} show  that $\PPP$ is forcing equivalent to a ${<}\omega_2$-directed closed poset. 
 Moreover, since $2^{\omega_1}=\omega_2$ holds, part \eqref{item_ChainCondition} of Theorem \ref{thm_PropertiesOfHytRautPoset} shows that $\PPP$ satisfies the $\omega_3$-chain condition. 
  Finally, Lemma \ref{lem_P_Jensen_complete} shows that $\PPP$ has a dense subset of cardinality $\theta$.

  Now, let $H$ be $\PPP$-generic over $\VV[G]$ and work in $\VV[G,H]$. 
   By the above observations, we then have $2^{\omega_2}=\theta$. In addition, part \eqref{item_MaximalSet} of Theorem \ref{thm_PropertiesOfHytRautPoset} shows that $M$ is the maximum element of $I[\omega_2] \cap \POT{S^{\omega_2}_{\omega_1}}$ mod $\NS{}$. Moreover, Corollary \ref{cor_Def_NS_MaxSet} shows that $\NS{}\restriction M$ is $\mathbf{\Delta}_1(\HH{\omega_3})$-definable. 
  In the following, assume that for every stationary subset $A$ of $S^2_0$, there is a stationary subset $R$ of $\text{IA}_{\omega_1}$ such that $W \prec H(\omega_3)$ and $A$ reflects at $W \cap \omega_2$ for all $W \in R$. 
  Set $\calE=\POT{M}\setminus\NS{\omega_2}$. Then $\calE$ is definable by a $\Sigma_1$-formula with parameters in $\HH{\omega_3}$.  
  
  \begin{claim}
   For every stationary subset $A$ of $S^2_0$, there is an element $E$ of $\calE$ with the property that $A$ reflects at every element of $E$.
  \end{claim}
  
  \begin{proof}[Proof of the Claim]
   By our assumption, there is a stationary subset $R$ of $\text{IA}_{\omega_1}$ such that $W \prec H(\omega_3)$ and $A$ reflects at $W \cap \omega_2$ for every $W \in R$. 
  If we now define $E_0=\Set{W \cap \omega_2}{W \in R}$, then $E_0$ is a stationary subset of $S^{\omega_2}_{\omega_1}$. 
  Moreover,  since $2^\omega \leq \omega_2$, each $W \in R$ has (as an element) an enumeration $\vec{z} = \seq{z_\xi}{\xi<\omega_2}$ of $[\omega_2]^\omega$ and therefore the internal approachability of $W$ and the fact that $\vec{z} \in W$ imply that $W \cap \omega_2$ is approachable with respect to  $\vec{z}$.  Hence, the set $E_0$ is stationary and an element of  $I[\omega_2]$. 
  Since $M$ is the largest such element mod $\NS{}$, we have in particular that $E= E_0 \cap M$ is a stationary subset of $M$.      
 \end{proof}
  
  Using Lemma \ref{lemma:DefFromRefl}, we can now conclude that $\NS{}\restriction S^2_0$ is definable by a $\Sigma_1$-formula with parameters in $\HH{\omega_3}$.  
 \end{proof}


 \begin{proof}[Proof of Theorem \ref{MainTheorem1}]
Assume that $\FA$ holds, where $\FA$ is one of the following axioms:
\begin{itemize}
 \item $\MM^{+\mu}$, where $\mu$ is a cardinal and $0 \le \mu \le \omega_1$; or
 \item $\PFA^{+\mu}$, where $\mu$ is a cardinal and $1 \le \mu \le \omega_1$.
\end{itemize}
Let $\theta$ be a cardinal with $\theta^{\omega_2}=\theta$. 
Since $\PFA$ implies that $2^\omega=2^{\omega_1}=\omega_2$ holds (see \cite[Theorem 16.20 \& 31.23]{MR1940513}), our assumption allows us to apply Theorem \ref{AltMainTheorem1} to obtain a ${<}\omega_2$-directed closed poset with the properties listed in the conclusion of theorem.  
  Let $G$ be $\PPP$-generic over $\VV$. Then {\cite[Theorem 4.7]{cox2018forcing}} ensures that $\FA$ holds in $\VV[G]$.  Since $\FA$ holds in $\VV[G]$, there exists a stationary subset $R$ of $\text{IA}_{\omega_1}$ with the property that for all $W \in R$, we have $W \prec H(\omega_3)$ and $A$ reflects at $W \cap \omega_2$.\footnote{For the case corresponding to $\MM$, this follows by the proof of    {\cite[Theorem 13]{10.2307/1971415}}.  For the case corresponding to $\PFA^{+\mu}$ where $\mu \ge 1$, it follows from the remark on  {\cite[p. 20]{10.2307/1971415}}.  The $\omega_1$-enumerations in both proofs are easily seen to be internally approachable enumerations.}
 Then Theorem \ref{AltMainTheorem1} allows us to conclude that $\NS{}\restriction S^2_0$ is $\mathbf{\Delta}_1(\HH{\omega_3})$-definable in $\VV[G]$.  
 \end{proof}
 

The next theorem will be used to derive Theorem \ref{MainTheorem2}.
  \begin{theorem}\label{AltMainTheorem2}
  Assume that $2^\omega=\omega_1$, $2^{\omega_1}=\omega_2$, and $\theta$ is a cardinal with $\theta^{\omega_2}=\theta$. 
  Then there exists a ${<}\omega_2$-directed closed, cardinal-preserving poset $\PPP$ with the property that the following statements hold in $\VV[G]$ whenever $G$ is $\PPP$-generic over $\VV$: 
 \begin{enumerate}
  \item $2^{\omega_2}=\theta$. 

  \item\label{item:AltTheorem2item2} If every stationary subset of $S^2_0$ reflects to a point in $S^2_1$, then the set $\NS{\omega_2}$ is $\mathbf{\Delta}_1(\HH{\omega_3})$-definable. 
 \end{enumerate}
 \end{theorem}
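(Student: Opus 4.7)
The plan closely parallels the proof of Theorem \ref{AltMainTheorem1}. I would let $\PPP$ be the two-step iteration $\Add{\omega_2}{\theta} * \dot{\PPP}_0$, where $\dot{\PPP}_0$ is the canonical name for the poset produced by Theorem \ref{thm_PropertiesOfHytRautPoset} for $\kappa = \omega_1$ and $\varepsilon = \theta$ (the required condition $\varepsilon^{2^\kappa}=\varepsilon$ becomes $\theta^{\omega_2} = \theta$, which we assume). Since $\Add{\omega_2}{\theta}$ is ${<}\omega_2$-closed, the cardinal arithmetic at $\omega$ and $\omega_1$ persists in the intermediate extension, so $\dot{\PPP}_0$ is forced to satisfy the $\omega_3$-chain condition by part \eqref{item_ChainCondition} of Theorem \ref{thm_PropertiesOfHytRautPoset} and is forcing equivalent to a ${<}\omega_2$-directed closed poset by part \eqref{item_Jensen_complete} combined with Lemma \ref{lem_GeneralizeJensenComplete2}. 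Hence $\PPP$ itself is forcing equivalent to a ${<}\omega_2$-directed closed, cardinal-preserving poset, and a standard nice-name calculation, using that $\dot{\PPP}_0$ has a dense subset of size $\theta$ (by Lemma \ref{lem_P_Jensen_complete} and the density of flat conditions), yields $2^{\omega_2} = \theta$ in the final extension $V[G]$.

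For clause \eqref{item:AltTheorem2item2}, assume in $V[G]$ that every stationary subset of $S^2_0$ reflects to a point in $S^2_1$. Since $\omega_1^{{<}\omega_1} = \omega_1$ continues to hold in the intermediate extension, part \eqref{item_CardArith_AP_fail} of Lemma \ref{lem_CardArith_MaximalElement} gives that $S^2_1$ is itself a maximum element of $I[\omega_2] \cap \POT{S^2_1}$ mod $\NS{}$ there; part \eqref{item_MaximalSet} of Theorem \ref{thm_PropertiesOfHytRautPoset} ensures this is preserved into $V[G]$, and Corollary \ref{cor_Def_NS_MaxSet} applied with $M = S^2_1$ then produces a $\mathbf{\Delta}_1(\HH{\omega_3})$-definition of $\NS{} \restriction S^2_1$.

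The main obstacle is bootstrapping the above into a definition of $\NS{} \restriction S^2_0$, after which $\NS{\omega_2}$ will follow. I would first observe the following reflection boosting: under the hypothesis, for every stationary $A \subseteq S^2_0$ the set $R(A) = \Set{\alpha \in S^2_1}{A \cap \alpha \text{ is stationary in } \alpha}$ is itself stationary in $\omega_2$. Indeed, if some club $C$ were disjoint from $R(A)$, applying the hypothesis to the stationary set $A \cap C$ would yield a reflection point $\alpha^* \in S^2_1$; but then $C \cap \alpha^* \supseteq (A \cap C) \cap \alpha^*$ is unbounded in $\alpha^*$, forcing $\alpha^* \in C$ by closedness, while $A \cap \alpha^* \supseteq (A \cap C) \cap \alpha^*$ gives $\alpha^* \in R(A)$, contradicting $C \cap R(A) = \emptyset$. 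Letting $\calE$ be the collection of stationary subsets of $S^2_1$---which is $\Sigma_1$-definable over $\HH{\omega_3}$ using the established $\mathbf{\Delta}_1$-definition of $\NS{} \restriction S^2_1$---the sets $R(A)$ witness the hypotheses of Lemma \ref{lemma:DefFromRefl} applied with $S = S^2_0$, yielding a $\Pi_1$-definition of $\NS{} \restriction S^2_0$ with parameters in $\HH{\omega_3}$; combined with the ambient $\Sigma_1$-definability this gives $\mathbf{\Delta}_1(\HH{\omega_3})$-definability. Finally, since $\omega_2 \setminus (S^2_0 \cup S^2_1)$ is non-stationary, $A \in \NS{\omega_2}$ iff $A \cap S^2_0 \in \NS{} \restriction S^2_0$ and $A \cap S^2_1 \in \NS{} \restriction S^2_1$, so a boolean combination of the two previous $\mathbf{\Delta}_1(\HH{\omega_3})$-definitions gives the desired definition of $\NS{\omega_2}$.
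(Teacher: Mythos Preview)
Your proposal is correct and follows essentially the same approach as the paper: form the two-step iteration $\Add{\omega_2}{\theta}*\dot{\PPP}_0$, use $\CH$ to get $S^2_1$ as the maximum element so that Corollary \ref{cor_Def_NS_MaxSet} gives a $\mathbf{\Delta}_1(\HH{\omega_3})$-definition of $\NS{}\restriction S^2_1$, then feed this into Lemma \ref{lemma:DefFromRefl} with $\calE=\POT{S^2_1}\setminus\NS{\omega_2}$ to handle $\NS{}\restriction S^2_0$, and finally combine. The paper simply asserts the reflection-boosting step (``every stationary subset of $S^2_0$ reflects to stationarily-many points in $S^2_1$'') that you spell out in detail.
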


\begin{proof}
 Let $G$ be $\Add{\omega_2}{\theta}$-generic over $\VV$, let $\PPP$ be the poset produced by an application of Theorem \ref{thm_PropertiesOfHytRautPoset} with $\kappa=\omega_1$ and $\varepsilon=\theta$ in $\VV[G]$, and let $H$ be $\PPP$-generic over $\VV[G]$. 
 As above, we  have $(2^{\omega_2})^{\VV[G,H]}=\theta$ and, since $2^\omega=\omega_1$ holds in $\VV[G]$,  part \eqref{item_CardArith_AP_fail} of Lemma \ref{lem_CardArith_MaximalElement} and part \eqref{item_MaximalSet} of Theorem \ref{thm_PropertiesOfHytRautPoset} imply that $(S^2_1)^{\VV[G]}=(S^2_1)^{\VV[G,H]}$ is a maximum element of $I[\omega_2] \cap \POT{S^2_1}$ mod $\NS{}$ in both $\VV[G]$ and $\VV[G,H]$. In particular, Corollary \ref{cor_Def_NS_MaxSet} implies  that $\NS{}\restriction S^2_1$ is $\mathbf{\Delta}_1(\HH{\omega_3})$-definable in $\VV[G,H]$. 
 
  Now, work in $\VV[G,H]$ and assume that every stationary subset of $S^2_0$ reflects to a point in $S^2_1$. 
  Then every stationary subset of $S^2_0$ reflects to stationary-many points in $S^2_1$ and we can apply Lemma \ref{lemma:DefFromRefl} with $S^2_0$ and $\POT{S^2_1}\setminus\NS{\omega_2}$ to show that $\NS{}\restriction S^2_0$ is $\mathbf{\Delta}_1(\HH{\omega_3})$-definable. Since it is easy to see that 
  $$\NS{\omega_2} ~ = ~ \Set{A\subseteq\omega_2}{A\cap S^2_0\in\NS{}\restriction S^2_0 \text{ and }   A\cap S^2_1\in\NS{}\restriction S^2_1},$$  these computations allow us to conclude that $\NS{\omega_2}$ is $\mathbf{\Delta}_1$-definable.  
\end{proof}

 \begin{proof}[Proof of Theorem \ref{MainTheorem2}]
  Assume that $2^\omega=\omega_1$, $2^{\omega_1}=\omega_2$ and  either $\FA^{{+}}(\text{$\sigma$-closed})$ or $\mathrm{SCFA}$ holds. 
  Let $\theta$ be a cardinal with $\theta^{\omega_2}=\theta$, let $\PPP$ be the poset produced by an application of Theorem   \ref{AltMainTheorem2} and let $G$ be $\PPP$-generic over $\VV$. 
 Then $(2^{\omega_2})^{\VV[G]}=\theta$ holds. Moreover, by Theorem 4.7 of \cite{cox2018forcing}, either $\FA^{{+}}(\text{$\sigma$-closed})$ or $\mathrm{SCFA}$ holds in $\VV[G]$.  In the case of $\mathrm{SCFA}$, the fact that CH also holds ensures (by {\cite[Theorem 2.7 and Observation 2.8]{fuchs_2018}}) that, in $\VV[G]$, every stationary subset of $S^2_0$ reflects in a point in $S^2_1$.\footnote{Note that the $\CH$ assumption seems to be required for this consequence of SCFA; see Fuchs {\cite{fuchs2020canonical}} for some corrections on previous literature.}  In the case where $\text{FA}^+\big( \sigma \text{-closed} \big)$ holds, the proof of Theorem 8.3 of  \cite{MR776640} ensures the same kind of stationary reflection. 
  By Theorem \ref{AltMainTheorem2}, this shows that $\NS{\omega_2}$ is $\mathbf{\Delta}_1$-definable in $\VV[G]$.   
 \end{proof}


 \bibliographystyle{plain}
 \bibliography{references}

\end{document}